\documentclass[11pt,letterpaper]{amsart}

\usepackage{amssymb,amsfonts,amscd,tikz}
\usepackage[utf8]{inputenc}
\usepackage[colorlinks,linktocpage]{hyperref}
\hypersetup{linkcolor=[rgb]{0,0,0.715}}
\hypersetup{citecolor=[rgb]{0,0.715,0}}

\newtheorem{theorem}{Theorem}[section]
\newtheorem{corollary}[theorem]{Corollary}

\newtheorem{lemma}[theorem]{Lemma}

\newtheorem*{thm1}{Main Theorem}

\theoremstyle{definition}
\newtheorem{defn}[theorem]{Definition}

\newtheorem{example}[theorem]{Example}

\theoremstyle{remark}
\newtheorem{remark}[theorem]{Remark}

\makeatletter
\let\c@equation\c@theorem
\makeatother
\numberwithin{equation}{section}

\bibliographystyle{plain}

\setcounter{tocdepth}{1}

\title[]{On the limit of simply connected manifolds with discrete isometric cocomapct group actions}
\author{Jikang Wang}
\address[Jikang Wang]{UC Berkeley, Berkeley, CA, US}
\email{jikangwang1117@gmail.com}
\thanks{}
\thanks{}

\thanks{}

\DeclareUnicodeCharacter{03C3}{\ensuremath{\sigma}}
\DeclareUnicodeCharacter{2217}{\ensuremath{*}}
\begin{document}
\date{}
\maketitle
\begin{abstract}
We study complete, connected and simply connected $n$-dim Riemannian manifold $M$ satisfying Ricci curvature lower bound. Further more, suppose that $M$ admits discrete isometric group actions $G$ so that the diameter of the quotient space $\mathrm{diam}(M/G)$ is bounded. In particular, for any $n$-manifold $N$ satisfying $\mathrm{diam}(N) \le D$ and $\mathrm{Ric} \ge -(n-1)$, the universal cover and fundamental group $(\widetilde{N},G)$ satisfies the above condition. 

Let $\{(M_i,p_i)\}_{i \in \mathbb{N}}$ be a sequence of complete, connected and simply connected $n$-dim Riemmannian manifolds satisfying $\mathrm{Ric} \ge -(n-1)$. Let $G_i$ be a discrete subgroup of $\mathrm{Iso}(M_i)$ such that $\mathrm{diam}(M_i/G_i) \le D$ where $D>0$ is fixed. Passing to a subsequence, $(M_i, p_i,G_i)$ equivariantly pointed-Gromov-Hausdorff converges to $(X,p,G)$. Then $G$ is a Lie group by Cheeger-Colding and Colding-Naber. We shall show that the identity component $G_0$ is a nilpotent Lie group. Therefore there is a maximal torus $T^k$ in $G$. Our main result is that $X/T^k$ is simply connected. Moreover, $\pi_1(X,p)$ is generated by loops contained in the $T^k$-orbits up to conjugation; each of these loops can be represented by $\alpha^{-1} \cdot \beta \cdot \alpha$ where $\alpha$ is a curve from $y$ to $p$ for some $y \in X$, and $\beta$ is a loop at $y$ contained in the $T^k$-orbit of $y$. 
\end{abstract}
\tableofcontents
\section{Introduction}
Consider a sequence of complete and connected $n$-manifolds $(M_i,p_i)$ with $\mathrm{Ric} \ge -(n-1)$. Passing to a subsequence if necessary, we may assume that $(M_i,p_i)$ converges to a proper geodesic metric space $(X,p)$ in the pointed Gromov-Hausdorff sense. We shall say that $(M_i,p_i)$ converges to $(X,p)$ for brevity. Such a space $(X,p)$ is called a Ricci limit space. $(X,p)$ is non-collapsing if the $\mathrm{Vol}(B_1(p_i))$ has a uniform lower bound. 

In addition, assume that $G_i$ is a closed subgroup of $\mathrm{Iso}(M_i)$, the isometry group of $M_i$. Then passing to a subsequence, we obtain equivariant Gromov-Hausdorff convergence,
$$(M_i,p_i,G_i) \overset{eGH}\longrightarrow (X,p,G).$$ 

The regularity and geometric structure theory of $(X,p)$ have been studied extensively by Cheeger, Colding and Naber \cite{CheegerColding1997, CheegerColding2000a,CheegerColding2000b,CheegerNaber2013,CheegerNaber2015, ColdingNaber2012}. A celebrated theorem by Cheeger-Colding and Colding-Naber asserts that $G$ is a Lie group.  In section 4, we will show that if all $G_i$ are discrete, then the identity component $G_0$ of $G$ is a nilpotent Lie group. 

The local topology of a Ricci limit space has been studied in \cite{SormaniWei2001,SormaniWei2004,PanWei2019,PanWang2021,Wang2021}, see section 2.3 for further details. In particular, $X$ is semi-locally simply connected and we have the following result regarding the fundamental group of the limit space.  
\begin{theorem}\label{bounded diam}(\cite{Wang2021})
Assume that $\{(M_i,p_i)\}_{i \in \mathbb{N}}$ is a sequence of complete and connected Riemannian $n$-manifolds with $\mathrm{diam}(M_i) \le D$ and $\mathrm{Ric} \ge -(n-1)$. If 
$M_i \overset{GH}\longrightarrow X$,
then there is a surjective homomorphism 
$\phi_i: \pi_1(M_i,p_i) \to \pi_1(X,p)$
for sufficiently large $i$. In particular, if all $M_i$ are simply connected, $X$ must be simply connected
\end{theorem}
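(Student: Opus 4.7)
The plan is to construct $\phi_i$ explicitly via Gromov--Hausdorff approximations, relying on the semi-local simple connectedness of the Ricci limit space $X$ established in the references cited before this theorem. Since $\mathrm{diam}(X) \le D$ and $X$ is semi-locally simply connected, a standard compactness argument produces a uniform $\epsilon > 0$ such that every loop in $X$ of diameter at most $3\epsilon$ bounds a disk in $X$.

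For $i$ large, fix $\epsilon_i$-Gromov--Hausdorff approximations $f_i : M_i \to X$ and $g_i : X \to M_i$ with $f_i(p_i) = p$, $g_i(p) = p_i$, and $\epsilon_i \to 0$. Given a loop $\gamma$ at $p_i$, subdivide it as $p_i = x_0, x_1, \ldots, x_k = p_i$ with $d(x_j, x_{j+1}) < \epsilon$, replace each $x_j$ by $f_i(x_j)$, join consecutive images by length-minimizing geodesics in $X$, and declare $\phi_i([\gamma])$ to be the homotopy class of the resulting loop $\bar\gamma$ at $p$. Independence of the subdivision is immediate: a common refinement of two subdivisions introduces only triangular loops in $X$ of perimeter $O(\epsilon) + O(\epsilon_i)$, which are contractible by the choice of $\epsilon$.

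The crucial step is showing that a free homotopy $H : [0,1]^2 \to M_i$ between loops $\gamma$ and $\gamma'$ descends to a homotopy in $X$ between $\bar\gamma$ and $\bar\gamma'$. Tile the square by a grid fine enough that each cell has $H$-image of diameter at most $\epsilon_i$; then apply $f_i$ to every grid vertex and connect adjacent images by short geodesics in $X$. Each cell becomes a loop in $X$ of diameter at most $3\epsilon$ and is therefore contractible in $X$. Patching these disks cell by cell yields the desired homotopy. The same two-dimensional argument re-proves well-definedness and shows that $\phi_i$ respects concatenation, hence is a homomorphism.

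For surjectivity, given a loop $\eta$ at $p$ in $X$, subdivide it as $p = y_0, \ldots, y_m = p$ with $d(y_j, y_{j+1}) < \epsilon$, lift each vertex via $g_i$ (normalizing the endpoints to $p_i$), and connect consecutive lifts by short geodesics in $M_i$ to obtain a loop $\tilde\eta$ at $p_i$. By construction $\phi_i([\tilde\eta])$ and $[\eta]$ differ only by triangular loops of diameter $O(\epsilon)$ in $X$, so they coincide in $\pi_1(X,p)$. In particular, if every $M_i$ is simply connected then $\pi_1(X,p)$ is the surjective image of the trivial group and vanishes. The main obstacle is the two-dimensional lifting in the preceding paragraph; this is precisely where the uniform contractibility radius supplied by semi-local simple connectedness (together with compactness of $X$) is essential, since a mere pointwise statement of semi-local simple connectedness would not allow one to triangulate a homotopy with a controlled global mesh.
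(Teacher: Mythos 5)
Your proposal is correct and is essentially the argument behind this theorem, which the paper itself does not re-prove but cites from \cite{Wang2021}: compactness plus semi-local simple connectedness yields a uniform contractibility scale $\epsilon$, and the discretization of loops and of homotopy squares via GH approximations is exactly the mechanism the paper summarizes in Lemma \ref{Homo}. Apart from routine bookkeeping (choosing the subdivision mesh well below the uniform scale so that the ``$O(\epsilon)$'' comparison loops really fall under the $3\epsilon$ threshold, and noting that $f_i\circ g_i$ is uniformly close to the identity), your write-up matches the standard proof.
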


However, if $\mathrm{diam}(M_i)$ is unbounded, then $X$ would be non-compact and there may be no global Gromov-Hausdorff approximation between $M_i$ and $X$. In this case, the relation of $\pi_1(M_i,p_i)$ and $\pi_1(X,p)$ is unclear. We shall construct an example that a sequence of homogeneous simply connected manifolds converge to a manifold with a non-trivial fundamental group; see also \cite{SormaniWei2004,Zamora2020}.

\begin{example}\label{exp}
Consider the Lie group $SU(2)=S^3$ and its Lie algebra is generated by
$$X_1=\begin{bmatrix}
i & 0 \\ 0 & -i
\end{bmatrix},
X_2=\begin{bmatrix}
0 & 1 \\ -1 & 0
\end{bmatrix},
X_3=\begin{bmatrix}
0 & i \\ i & 0
\end{bmatrix}.
$$
$X_1$,$X_2$ and $X_3$ are left-invariant vector fields on $S^3$. Then by defining a left-invariant metric $g(X_i,X_j)=\delta^{ij}$ where $1 \le i,j \le 3$ and $\delta^{ij}$ is the standard Kronecker delta. This gives the standard unit sphere $(S^3,g)$. 

We construct a sequence of metrics $g_i$ on $S^3$ by defining $X_1$,$X_2/i$,$X_3/i$ orthogonal,
$$g_i(X_1,X_1)=1, g_i(X_2,X_2)=g_i(X_3,X_3)=i^2.$$
For each $i$, $(S^3,g_i)$ has sectional curvature in the range $[1/i^4, 4/i^2-3/i^4]$. We may think of $(S^3,g_i)$ as a Hopf fibration
$S^1 \to S^3 \to S^2$,
where $S^1$ represents the circle actions generated by the left-invariant vector field $X_1$ (right actions). Then we preserve the metric along the fiber, while scaling the metric on the tangent space perpendicular to the orbit. Fixing a base point $p \in S^3$, we obtain the following Gromov-Hausdorff convergence
$$(S^3,g_i,p) \overset{pGH}\longrightarrow (\mathbb{R}^2 \times S^1,q),$$
where the limit metric is flat.

Note that the limit space $\mathbb{R}^2 \times S^1$ is not simply connected. And left actions of $SU(2)$ act transitively and isometrically on $(S^3,g_i)$. 
\end{example}

A natural question arises: under what conditions do simply connected manifolds with a Ricci curvature lower bound converge to a simply connected limit? 

Assume that $(M_i,p_i)$ is a sequence of complete $n$-manifolds with $\mathrm{Ric} \ge -(n-1)$ and $G_i$ is a discrete subgroup of $\mathrm{Iso}(M_i)$. We say that $(M_i,p_i,G_i)$ almost homogeneous if $\mathrm{diam}(M_i/G_i) \to 0$ as $i \to 0$.

Using the fact that Ricci limit spaces are semi-locally simply connected, Zamora's work implies that simply connected almost homogeneous manifolds with a Ricci lower bound converge to a simply connected limit. 

\begin{theorem}\label{almost homo}(\cite{Zamora2020})
Let $(M_i,p_i)$ be a sequence of complete and connected Riemannian $n$-manifolds with $\mathrm{Ric} \ge -(n-1)$ and let $G_i$ be a discrete subgroup of $\mathrm{Iso}(M_i)$ such that $\mathrm{diam}(M_i/G_i) \le \frac{1}{i}$. Suppose that $(M_i,p_i) \overset{GH}\longrightarrow (X,p)$. Then for sufficiently large $i$, there exists a subgroup $\Gamma_i$ of $\pi_1(M_i,p_i)$ and a surjective homomorphism $\Gamma_i \to \pi_1(X,p)$. In particular, if all $M_i$ are simply connected, $X$ must be simply connected.
\end{theorem}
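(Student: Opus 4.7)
The plan is to combine the semi-local simple connectedness of the Ricci limit $X$ (established in \cite{PanWang2021, Wang2021}) with the almost-homogeneity of the $G_i$-actions to construct, for sufficiently large $i$, a distinguished subgroup $\Gamma_i \le \pi_1(M_i, p_i)$ and a surjective homomorphism $\Gamma_i \to \pi_1(X, p)$. Because $X$ is semi-locally simply connected, I can fix $\epsilon > 0$ such that every loop of diameter less than $\epsilon$ in $X$ is contractible in $X$. Equivalently, the $\epsilon$-cover of $X$ agrees with the universal cover, so $\pi_1(X, p)$ is generated by classes of loops of the form $\alpha^{-1}\beta\alpha$, where $\beta$ is a loop of diameter $<\epsilon$ at some point $q \in X$ and $\alpha$ is a path from $p$ to $q$. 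I would fix such a generating set once and for all.

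Next, I would lift each generator $[\alpha^{-1}\beta\alpha]$ to a class $[\gamma_i] \in \pi_1(M_i, p_i)$, for $i$ large enough that both the pointed Gromov-Hausdorff error $\epsilon_i$ and the orbit density $1/i$ are much smaller than $\epsilon$. Approximating $\alpha$ and $\beta$ by curves in $M_i$ via the GH-approximation gives broken arcs whose endpoints do not match; the hypothesis $\mathrm{diam}(M_i/G_i) \le 1/i$ guarantees that the orbit $G_i \cdot p_i$ is $(1/i)$-dense, so I can close up the broken arcs by short connecting arcs of length $O(\epsilon_i + 1/i)$ into an honest loop $\gamma_i$ based at $p_i$. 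Define $\Gamma_i$ to be the subgroup of $\pi_1(M_i, p_i)$ generated by these $[\gamma_i]$; the assignment $[\gamma_i] \mapsto [\alpha^{-1}\beta\alpha]$ is tautologically surjective onto $\pi_1(X, p)$.

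The crux is to show this assignment extends to a well-defined homomorphism $\Gamma_i \to \pi_1(X, p)$: if a word in the generators $[\gamma_i]$ represents the trivial element in $\pi_1(M_i, p_i)$, the corresponding word must represent the trivial element of $\pi_1(X, p)$. I would argue by taking an explicit null-homotopy in $M_i$, subdividing the homotopy disk into a grid whose mesh is much smaller than $\epsilon$, pushing the grid vertices forward to $X$ via the GH-approximation, and then contracting each small cell inside $X$ using semi-local simple connectedness. The almost-homogeneity enters crucially: it enables the closing-up step above, and it ensures that each vertex of the homotopy grid can be controlled via an orbit element of $G_i$ so that the push-forwards in $X$ stay below the simple connectedness scale. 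The main obstacle is the quantitative compatibility of the scales $\epsilon$, $\epsilon_i$, $1/i$, and the grid mesh; showing these can all be chosen simultaneously for $i$ sufficiently large is the delicate part. Granting this, the ``in particular'' clause follows immediately: if each $M_i$ is simply connected, then $\Gamma_i = \{1\}$, which forces $\pi_1(X, p) = \{1\}$.
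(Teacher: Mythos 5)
There is a genuine gap, and it appears already in your first step. Semi-local simple connectedness of $X$ says that every lasso $\alpha^{-1}\cdot\beta\cdot\alpha$ with $\beta$ of diameter $<\epsilon$ is \emph{null-homotopic}; equivalently the $\epsilon$-cover equals the universal cover because the subgroup generated by such small lassos is \emph{trivial}. Your claim that $\pi_1(X,p)$ is therefore \emph{generated} by small lassos is exactly backwards: with your choice of $\epsilon$ every such generator $\beta$ is contractible, so your two assumptions together would force $\pi_1(X,p)=1$ from the outset, which is false in general (in Example \ref{exp} of the paper, $(S^3,g_i)\to\mathbb{R}^2\times S^1$ has limit with $\pi_1=\mathbb{Z}$ while all small loops are contractible). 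So the generating set, hence $\Gamma_i$ and the ``tautologically surjective'' assignment, are never defined. A further definitional problem: when $\mathrm{diam}(M_i)$ is unbounded, $X$ is non-compact and generators of $\pi_1(X,p)$ may be based at points arbitrarily far from $p$, so no single $i$ makes the pointed GH approximation cover all of them.

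The well-definedness step fails independently. A null-homotopy in $M_i$ of a word in your lifted loops carries no diameter control whatsoever, while the GH approximation is only defined on $B_{1/\epsilon_i}(p_i)$; as the paper stresses in the introduction, there is in general \emph{no global} GH approximation here, and this is precisely why the relation between $\pi_1(M_i,p_i)$ and $\pi_1(X,p)$ is unclear. Your proposed fix---translating grid vertices back near $p_i$ by elements of $G_i$---destroys the adjacency of neighboring cells: different cells get moved by different group elements, and matching them up requires controlling the relations in the pseudo-group $G_i(p_i,20/i)$, which is the actual mathematical content of the theorem rather than a quantitative technicality. The proof recalled in the paper takes a completely different, group-theoretic route: by equivariant convergence the limit group $G$ of the $G_i$ is connected, nilpotent, and homeomorphic to $X$; simple connectedness of $M_i$ makes $G_i(p_i,20/i)$ a \emph{determining} set (Lemma \ref{determining}, via the groupfication/covering-space construction), so $G$ is determined by arbitrarily small neighborhoods of its identity and must be the simply connected Lie group with that local structure; hence $X\cong G$ is simply connected. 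None of this is recoverable from the homotopy-pushing scheme you outline, so the ``delicate part'' you defer is an essential obstruction, not a matter of choosing scales.
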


We briefly recall the proof of Theorem \ref{almost homo} in the case that all $M_i$ are simply connected. Zamora showed that  the limit group $G$ of $G_i$ is connected, nilpotent and homemorphic to $X$. Thus it is sufficient to prove that $G$ is simply connected. Since $\mathrm{diam}(M_i/G_i) \le \frac{1}{i} \to 0$, $G_i$ is determined by the pseudo-group 
$$G_i(p_i,20/i) = \{ g \in G_i, d(gp_i,p_i) \le 20/i \},$$
as shown in Lemma \ref{determining}. here "determining" means that $G_i(p_i,20/i)$ generates $G_i$, and relations in $G_i(p_i,20/i)$ generates all relations in $G_i$. Naively speaking, since $G_i$ is determined by $G_i(p_i,20/i)$, $G$ is also determined by a very small neighborhoood of the identity. It is well-known that a very small neighborhood of the identity in a Lie group uniquely determine a simply connected Lie group, thus $G$ must be simply connected. In particular, $G$ contains no compact subgroup since it is nilpotent.

Zamora conjectured that for a sequence of simply connected manifolds $M_i$ with $\mathrm{Ric} \ge -(n-1)$ and discrete isometric group actions $G_i$ satisfying $\mathrm{diam}(M_i/G_i) \le D$ for some fixed $D$, the limit space $X$ would be simply connected. However, the situation for $\mathrm{diam}(M_i/G_i) \le D$ is much more complicated: the limit group $G$ may not be homeomorphic to the limit space $X$, and $G$ may be disconnected or contain a torus subgroup. Our main theorem in this paper is a partial result about Zamora's conjecture. 

\begin{thm1}\label{A}
Let $\{(M_i,p_i)\}_{i \in \mathbb{N}}$ be a sequence of complete, connected and simply connected $n$-dim Riemannian manifolds with $\mathrm{Ric} \ge -(n-1)$. Suppose $G_i$ is a discrete subgroup of $\mathrm{Iso}(M_i)$ such that $\mathrm{diam}(M_i/G_i) \le D$ for some constant $D>0$. 
Suppose that
$(M_i,p_i,G_i) \overset{eGH}\longrightarrow (X,p,G)$.
Then the identity component $G_0$ of $G$, is a nilpotent Lie group. 
Let $T^k$ be the maximal torus in the identity component $G_0$, then $X/T^k$ is simply connected. Moreover, $\pi_1(X,p)$ is generated by loops contained in the $T^k$-orbits up to conjugation, each of which can be represented by $\alpha^{-1} \cdot \beta \cdot \alpha$ where $\alpha$ is a curve from $y$ to $p$ for some $y \in X$, and $\beta$ is a loop at $y$ contained in the $T^k$-orbit of $y$. 
\end{thm1}

\begin{remark}
In the Main Theorem, if we only assume that $G_i$ is only closed (but not necessarily discrete), then $G_0$ is not necessarily a nilpotent group. However, we can still conclude that $X/G_0$ is simply connected.
\end{remark}

\begin{corollary}
Let $(N_i,p_i)$ be a sequence of $n$-manifolds with $\mathrm{diam}(N_i) \le D$, $\mathrm{Ric} \ge -(n-1)$ and $\mathrm{Vol} \ge V > 0$. Let $(\widetilde{N}_i,\tilde{p}_i,G_i)$ denote the universal cover of $N_i$ with fundamental group $G_i$. Passing to a subsequence if necessary, $ (\widetilde{N}_i,\tilde{p}_i,G_i) \overset{eGH}\longrightarrow (X,p,G)$.
Then $G$ is discrete and $X$ is simply connected. 
\end{corollary}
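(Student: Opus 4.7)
The plan is to apply the Main Theorem to $(\widetilde{N}_i,\tilde p_i,G_i)$ and then use the hypothesis $\mathrm{Vol}(N_i) \ge V$ to force $G_0=\{e\}$, so that the conclusion of the Main Theorem collapses to ``$X$ is simply connected.''

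First I verify the Main Theorem's hypotheses on the universal covers. Each $\widetilde{N}_i$ is simply connected, $\mathrm{Ric} \ge -(n-1)$ lifts to the cover, $G_i=\pi_1(N_i)$ acts as a discrete subgroup of $\mathrm{Iso}(\widetilde{N}_i)$ by deck transformations, and $\mathrm{diam}(\widetilde{N}_i/G_i)=\mathrm{diam}(N_i)\le D$. The Main Theorem then gives that $G_0$ is a nilpotent Lie group with maximal torus $T^k$, and that $X/T^k$ is simply connected.

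Next I use non-collapsing to pin down Hausdorff dimensions. Bishop-Gromov in $N_i$, combined with $\mathrm{Vol}(N_i)\ge V$ and $\mathrm{diam}(N_i)\le D$, gives a uniform lower bound $\mathrm{Vol}(B_r(p_i))\ge c(r,D,n,V)>0$ for $r\le D$. Since the covering map $\widetilde{N}_i\to N_i$ is a local isometry and $B_r(\tilde p_i)$ surjects onto $B_r(p_i)$, we get $\mathrm{Vol}(B_r(\tilde p_i))\ge \mathrm{Vol}(B_r(p_i))$, so the $\widetilde{N}_i$ are uniformly non-collapsed near $\tilde p_i$. By Colding's volume convergence, $X$ is a non-collapsed Ricci limit of Hausdorff dimension $n$; applying the same theorem to $N_i\overset{GH}\longrightarrow X/G$ gives $\mathcal{H}^n(X/G)=\lim_i \mathrm{Vol}(N_i)\ge V>0$, so $X/G$ also has Hausdorff dimension $n$.

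Now suppose for contradiction that $G_0\neq\{e\}$. Then $G_0$ is a positive dimensional connected Lie group acting faithfully on $X$ by isometries, since by construction the equivariant GH limit group sits as a closed subgroup $G\le \mathrm{Iso}(X)$. Faithfulness excludes the case of the principal isotropy having the same dimension as $G_0$ (otherwise $G_0$ would fix a full-measure subset of the regular set and hence act trivially), so principal orbits are positive dimensional. Combined with the Cheeger-Colding fact that regular points with Euclidean tangent cone $\mathbb{R}^n$ have full $\mathcal{H}^n$-measure, a slice-type argument gives $\dim_{\mathcal{H}}(X/G_0)\le n-1$, whence $\mathcal{H}^n(X/G_0)=0$. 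Because $X/G$ is a $1$-Lipschitz quotient of $X/G_0$ by $G/G_0$, $\mathcal{H}^n(X/G)\le \mathcal{H}^n(X/G_0)=0$, contradicting the previous estimate.

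Hence $G_0=\{e\}$, so $G$ is discrete and $T^k$ is trivial; the Main Theorem then yields $X=X/T^k$ simply connected. The main obstacle is the step $\dim_{\mathcal{H}}(X/G_0)\le n-1$ for a faithful positive dimensional Lie group action by isometries on a non-collapsed Ricci limit: the intuition from the smooth slice theorem is clear, but transferring it rigorously to the singular setting requires Cheeger-Colding regularity and a careful analysis of the $G_0$-action on Euclidean tangent cones at regular points.
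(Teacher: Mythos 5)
Your overall strategy---reduce everything to showing $G_0=\{e\}$ and extract that from non-collapsing---is genuinely different from the paper's, but it hinges on a step you do not prove: that a non-trivial connected group $G_0$ of isometries of the non-collapsed limit $X$ forces $\mathcal{H}^n(X/G_0)=0$. As you yourself flag, this is the crux, and it is not routine. You must first rule out that the global fixed-point set of $G_0$ carries positive $\mathcal{H}^n$-measure: it is closed with empty interior, but a closed set with empty interior can still have positive measure, so excluding this requires a blow-up argument at density points using Cheeger--Colding regularity and the behavior of the induced isometries on $\mathbb{R}^n$ tangent cones. Second, even off the fixed set, Hausdorff dimension is not additive under metric quotients, so ``orbits are connected and non-trivial, hence of dimension $\ge 1$, hence the quotient drops a dimension'' does not follow from Palais' topological slice theorem alone; one needs quantitative control of orbit sizes near regular points. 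Your parenthetical dismissal of full-dimensional principal isotropy (``$G_0$ would fix a full-measure subset\dots hence act trivially'') has the same character: plausibly fixable in the non-collapsed setting (a closed full-measure set is dense because every ball has positive $\mathcal{H}^n$-measure by volume convergence), but not carried out. As written, the proposal is a program rather than a proof, with its hardest step explicitly deferred.

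The paper's proof avoids the limit space entirely and needs none of this regularity theory. It works in the manifolds $\widetilde N_i$ before passing to the limit: let $U_i$ be the Dirichlet domain of $N_i$ in $\widetilde N_i$ containing $\tilde p_i$, so that $U_i\subset B_{2D}(\tilde p_i)$ and distinct translates $gU_i$, $g\in G_i$, overlap in measure zero; then
$$|G_i(\tilde p_i,r)|=\frac{\mathrm{Vol}\bigl(G_i(\tilde p_i,r)U_i\bigr)}{\mathrm{Vol}(U_i)}\le \frac{\mathrm{Vol}\bigl(\bar B_{2D+r}(\tilde p_i)\bigr)}{V}\le C(n,D,r,V),$$
using $\mathrm{Vol}(U_i)=\mathrm{Vol}(N_i)\ge V$ and Bishop--Gromov volume comparison. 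A uniform bound on $|G_i(\tilde p_i,r)|$ for every $r$ forces the limit group $G$ to be discrete (infinitely many distinct elements of $G(p,r)$ would be approximated by distinct elements of $G_i(\tilde p_i,r+1)$), and then the Main Theorem applies with $T^k$ trivial, giving $X=X/T^k$ simply connected. You should replace your dimension-drop argument by this elementary packing/volume-comparison bound, which deploys the hypothesis $\mathrm{Vol}\ge V$ exactly where it is effective and sidesteps the unresolved singular slice analysis.
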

\begin{proof}
By the Main Theorem, it is sufficient to show that for any $r>0$, $|G_i(\tilde{p}_i,r)|$ is bounded thus $G$ is discrete, where 
$$G_i(\tilde{p}_i,r) = \{ g \in G_i, d(\tilde{p}_i,g\tilde{p}_i) \le r \}.$$

Let $U_i \subset \widetilde{N}_i$ be the Dirichlet domain of $N_i$ in $\widetilde{N}_i$ and $\tilde{p}_i \in U_i$. Then  $U_i \subset B_{2D}(\tilde{p}_i)$. Then the orbit space $G_i(\tilde{p}_i,r)U_i \subset \bar{B}_{2D+r}(\tilde{p}_i)$. Note that for any non-trivial $g \in G_i$, $gU_i \cap U_i$ has $0$ measure. Therefore by the volume comparison theorem,
$$|G_i(\tilde{p}_i,r)| = \frac{\mathrm{Vol}( G_i(\tilde{p}_i,r)U_i)} {\mathrm{Vol}(U_i)} \le \frac{\mathrm{Vol}(\bar{B}_{2D+r}(\tilde{p}_i))}{V} \le C(n,D,r,V)$$
\end{proof}

Recall the famous splitting theorem by Cheeger-Gromoll. 
\begin{theorem}\label{CG} ( \cite{CheegerGromoll})
If a complete and connected Riemannian manifold $(M,g)$ contains a line and satisfies $\mathrm{Ric} \ge 0$, then $(M,g)$ is isometric to a product metric space $(H \times \mathbb{R},g_0+dt^2)$.
\end{theorem}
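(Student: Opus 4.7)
The plan is to follow the classical Busemann-function argument: construct two auxiliary $1$-Lipschitz functions from the two rays of the line, show their sum vanishes identically via a maximum principle driven by Ricci nonnegativity, then feed the resulting harmonic function into Bochner's formula to exhibit a parallel unit gradient field whose flow splits off an $\mathbb{R}$ factor.

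More concretely, first I would write the line as $\gamma:\mathbb{R}\to M$ and define the Busemann functions
\[
b^{\pm}(x) = \lim_{t\to\pm\infty}\bigl(t - d(x,\gamma(\pm t))\bigr),
\]
which exist by monotonicity and are $1$-Lipschitz. The triangle inequality along a long subarc of $\gamma$ yields $b^+ + b^- \ge 0$, with equality achieved on $\gamma$ itself. Next, using the Laplacian comparison theorem under $\mathrm{Ric}\ge 0$, each approximating distance function $d(\cdot,\gamma(\pm t))$ satisfies $\Delta d \le (n-1)/d$ in the barrier sense, and passing to the limit shows $\Delta b^\pm \ge 0$ weakly, so $b^+$ and $b^-$ are both subharmonic.

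Then $b^+ + b^-$ is a nonnegative subharmonic function attaining $0$ on $\gamma$; the strong maximum principle (in the barrier or distributional formulation of Calabi-Cheng-Yau) forces $b^+ + b^- \equiv 0$. Consequently $-b^- = b^+$ is both subharmonic and super\-harmonic, so $b^+$ is weakly harmonic and by elliptic regularity smooth. Because $b^+$ is $1$-Lipschitz with $|\nabla b^+| = 1$ along $\gamma$, plugging $b^+$ into Bochner's formula
\[
\tfrac{1}{2}\Delta|\nabla b^+|^2 = |\nabla^2 b^+|^2 + \mathrm{Ric}(\nabla b^+, \nabla b^+) + \langle \nabla\Delta b^+,\nabla b^+\rangle
\]
and integrating against a cutoff, the two nonnegative terms on the right together with $\Delta b^+=0$ force $\nabla^2 b^+ \equiv 0$ and $|\nabla b^+|\equiv 1$. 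Hence $\nabla b^+$ is a parallel unit vector field. Letting $H=(b^+)^{-1}(0)$ and $\phi_t$ be the gradient flow of $\nabla b^+$, the map $H\times\mathbb{R}\to M$, $(x,t)\mapsto \phi_t(x)$, is a Riemannian isometry onto $(M,g)$, giving the desired splitting.

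The main obstacle is the regularity issue in the middle of the argument: the Busemann functions are only Lipschitz a priori, so the Laplacian comparison, subharmonicity, and maximum principle all have to be interpreted in a weak sense (either via Calabi's notion of upper barriers, or viscosity/distributional Laplacians). Once one commits to a framework rigorous enough to promote $b^+$ from weakly harmonic to smooth, the Bochner step and the construction of the isometric splitting are essentially formal consequences.
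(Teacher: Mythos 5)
The paper itself gives no proof of this statement---it is quoted from Cheeger--Gromoll as a classical theorem---and your Busemann-function outline is precisely the standard argument (in its streamlined Eschenburg--Heintze form). However, one step is false as written: the triangle inequality gives the \emph{opposite} sign. Since $\gamma$ is a line, $2t = d(\gamma(-t),\gamma(t)) \le d(x,\gamma(t)) + d(x,\gamma(-t))$ for every $x$, so $b^+ + b^- \le 0$, not $\ge 0$, with equality on $\gamma$. This is not a cosmetic slip, because you then apply the strong maximum principle to the subharmonic function $b^+ + b^-$ at a point where it attains $0$: with your sign, $0$ would be an interior \emph{minimum}, and subharmonic functions satisfy no minimum principle (for instance $|x|^2$ on $\mathbb{R}^n$ is subharmonic, nonnegative, and vanishes at the origin without vanishing identically), so the step "strong maximum principle forces $b^+ + b^- \equiv 0$" fails as stated. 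With the correct sign, $b^+ + b^- \le 0$ attains an interior \emph{maximum} along $\gamma$, and Calabi's strong maximum principle for barrier-sense subharmonic functions applies exactly as you intend.

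Once the orientation is fixed, the rest of your outline is sound and standard: $\Delta b^\pm \ge 0$ in the barrier sense follows from Laplacian comparison applied to $t - d(\cdot,\gamma(\pm t))$; then $b^+ = -b^-$ is simultaneously sub- and superharmonic, hence harmonic and smooth by elliptic regularity. In the Bochner step, rather than integrating against a cutoff (which requires some care on a complete noncompact manifold), it is cleaner to observe that Bochner with $\Delta b^+ = 0$ makes $|\nabla b^+|^2$ subharmonic, that $|\nabla b^+|^2 \le 1$ by the Lipschitz bound, and that $|\nabla b^+| = 1$ along $\gamma$ since $b^+(\gamma(s)) = s$; the strong maximum principle then gives $|\nabla b^+| \equiv 1$, whence $0 = |\nabla^2 b^+|^2 + \mathrm{Ric}(\nabla b^+,\nabla b^+)$ pointwise and both nonnegative terms vanish. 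The concluding flow-box map $H \times \mathbb{R} \to M$, $(x,t) \mapsto \phi_t(x)$ with $H = (b^+)^{-1}(0)$, is then an isometry as you describe.
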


If $M$ with $\mathrm{Ric} \ge 0$ admits cocompact isometric group actions $G$, then $M$ splits as $\mathbb{R}^k \times N$ where $N$ is a compact $(n-k)$-manifold with non-negative Ricci curvature and $\mathrm{Iso}(M)= \mathrm{Iso}(\mathbb{R}^k) \times \mathrm{Iso}(N)$. A lemma by Pan-Rong \cite{PanRong2018} shows that if $M$ is also simply connected and $G$ is discrete, then $N$ has a uniform diameter bound.

\begin{lemma}\label{diam}(\cite{PanRong2018})
Fix $n$ and $D$, there exists $D'$ so that the following holds. Assume that $M$ is a complete, connected and simply connected $n$-manifold with $\mathrm{Ric} \ge 0$ and $G$ is a discrete subgroup of $\mathrm{Iso}(M)$ with $\mathrm{diam}(M/G) \le D$. Then $M=\mathrm{R}^k \times N$ with $\mathrm{diam}(N) \le D'(D,n)$, $0 \le k \le n$.
\end{lemma}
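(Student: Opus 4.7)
The plan is to combine the Cheeger-Gromoll splitting theorem with an equivariant compactness-and-contradiction argument. First, I would apply Cheeger-Gromoll iteratively to $M$ to obtain the maximal flat splitting $M = \mathbb{R}^k \times N$, where $N$ is simply connected (since $\pi_1(M) = \pi_1(\mathbb{R}^k) \times \pi_1(N) = \pi_1(N)$), contains no line, and satisfies $\mathrm{Ric}_N \ge 0$. Because the de Rham--type splitting is canonical, every isometry of $M$ preserves it, so $\mathrm{Iso}(M) = \mathrm{Iso}(\mathbb{R}^k) \times \mathrm{Iso}(N)$ and each $g \in G$ decomposes as a pair $(g_1, g_2)$. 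Let $H \subset \mathrm{Iso}(N)$ denote the closure of the projection of $G$ to the second factor.

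Next, I would establish that $\mathrm{Iso}(N)_0$ is compact. The standard argument is that any one-parameter subgroup of isometries with an unbounded orbit on a manifold with $\mathrm{Ric} \ge 0$ produces a line as a limit of arcs of integral curves (via a Busemann/averaging argument), contradicting the line-free choice of $N$. Combined with cocompactness of $G$ on $M$, this gives that $H$ acts with $D$-dense orbits on $N$ and that $H_0$ is compact.

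The main step, and the heart of the lemma, is to bound $\mathrm{diam}(N)$ by a constant depending only on $n$ and $D$. I would argue by contradiction via equivariant Gromov-Hausdorff compactness. Suppose there is a sequence $(M_i, G_i)$ satisfying the hypotheses with $\mathrm{diam}(N_i) \to \infty$. Passing to a subsequence with $k_i = k$ constant and choosing basepoints $q_i \in N_i$, one obtains an equivariant pointed limit $(N_i, q_i, H_i) \to (N_\infty, q_\infty, H_\infty)$. The limit $N_\infty$ is noncompact with $\mathrm{Ric} \ge 0$ in the synthetic sense and carries a $D$-dense $H_\infty$-orbit. Applying the Cheeger-Colding splitting theorem yields a line in $N_\infty$; equivariant approximation then transfers this back to an almost-line in $N_i$ for large $i$, which in turn must be a genuine line in $N_i$, contradicting the choice of the splitting.

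The main obstacle is the final implication: converting a limit line in $N_\infty$ into a genuine line in some $N_i$. This is precisely where simple-connectivity of $M_i$ should be exploited --- without it, a long almost-line could ``close up'' or spiral through the action of $G_i$, and the approximation in $N_i$ need not extend to a true line. The argument should use that a discrete cocompact isometric action on a simply connected $M_i = \mathbb{R}^k \times N_i$ respects the splitting in a controlled way (for instance, projecting to a lattice-like subgroup of $\mathrm{Iso}(\mathbb{R}^k)$ up to finite index), which rules out the spiraling pathology and pins down the almost-line as the restriction to $N_i$ of a genuine line.
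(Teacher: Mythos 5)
The paper does not actually prove this lemma---it is quoted from \cite{PanRong2018}---so your attempt must be measured against the known argument there. Your first two steps are sound: iterated Cheeger--Gromoll gives $M=\mathbb{R}^k\times N$ with $N$ simply connected and line-free, the de Rham splitting is preserved by isometries so $\mathrm{Iso}(M)=\mathrm{Iso}(\mathbb{R}^k)\times\mathrm{Iso}(N)$, and the standard ray-translation argument (translate longer and longer minimizing segments of a ray back to a bounded region using the $D$-dense orbits, then take a limit by Arzel\`a--Ascoli) shows the line-free factor of a space with cocompact isometric action is compact, and likewise produces a line in the noncompact limit $N_\infty$. The fatal step is the last one: ``an almost-line in $N_i$ \dots must be a genuine line in $N_i$.'' This is impossible on its face: by your own second step each $N_i$ is \emph{compact}, and a compact space contains no line at all, since a line is globally minimizing. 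So the contradiction you aim for cannot materialize in this form. Nor is a line in $N_\infty$ by itself contradictory: unrescaled pointed limits of perfectly legitimate sequences (e.g.\ increasingly elongated convex spheres with $\mathrm{sec}\ge 0$) do split off lines; the Cheeger--Colding almost-splitting theorem yields only GH-closeness to a product on balls of $N_i$, never an actual splitting, and no ``lattice-like'' control of the projection of $G_i$ to $\mathrm{Iso}(\mathbb{R}^k)$ can change this, because the obstruction lives entirely inside the compact factor $N_i$. Note that your sketch never uses discreteness of $G_i$, simple connectedness of $M_i$, or the uniform bound $D$ in any quantitative way---precisely the hypotheses without which the statement is false---and defers them to a vague ``should use.''

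The actual mechanism in \cite{PanRong2018} is a renormalization, not a promotion of almost-lines. Assuming $\mathrm{diam}(N_i)\to\infty$, rescale by $\lambda_i=\mathrm{diam}(N_i)^{-1}\to 0$. Then $\mathrm{diam}(\lambda_i M_i/G_i)\le \lambda_i D\to 0$, so $(\lambda_i M_i,G_i)$ is an almost homogeneous sequence of simply connected manifolds with $\mathrm{Ric}\ge 0$ and discrete groups. By the structure theory for such sequences (the engine behind Theorem \ref{almost homo}, cf.\ \cite{Zamora2020}), the limit is a simply connected nilpotent Lie group with invariant metric, hence diffeomorphic to a Euclidean space and in particular contractible. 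On the other hand, the rescaled limit still splits as $\mathbb{R}^k\times N_\infty$ with $N_\infty$ compact, homogeneous, and of diameter exactly $1$; a compact homogeneous factor of a contractible space must be a point, a contradiction. The contradiction is thus extracted from the group-theoretic structure of the limit after rescaling---where discreteness and simple connectedness enter through the nilpotency and simple connectedness of the limit group---rather than from any line in a fixed $N_i$, which is where your proposal breaks down.
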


Let $M_i$ be a sequence of simply connected $n$-manifolds with $\mathrm{Ric} \ge 0$ and $G_i$ be a discrete subgroup of $\mathrm{Iso}(M_i)$ with $\mathrm{diam}(M_i/G_i) \le D$. Assume that $M_i=\mathbb{R}^k \times N_i$ where $N_i$ is compact; we may assume that $\mathbb{R}^k$ factor is same for all $M_i$. Then we have $\mathrm{diam}(N_i) \le D'$ and $N_i$ is simply connected. By Theorem \ref{bounded diam}, passing to a subsequence, $(M_i,p_i)$ converges to a simply connected limit space.
\begin{corollary}
In the context of Main Theorem, if we additionally assume that $M_i$ satisfies $\mathrm{Ric} \ge 0$, then the limit space $X$ is always simply connected.
\end{corollary}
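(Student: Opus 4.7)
The plan is to reduce directly to Theorem \ref{bounded diam} via the Cheeger--Gromoll splitting and Pan--Rong's diameter bound Lemma \ref{diam}; the full Main Theorem is not actually needed for this corollary.

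First I would apply Lemma \ref{diam} to each $(M_i,G_i)$ to obtain an isometric splitting $M_i = \mathbb{R}^{k_i} \times N_i$ with $N_i$ a compact $(n-k_i)$-manifold with $\mathrm{Ric} \ge 0$ and $\mathrm{diam}(N_i) \le D'(D,n)$. Because $M_i$ is simply connected and the splitting is a Riemannian product, $N_i$ must be simply connected as well. Since $k_i \in \{0, 1, \ldots, n\}$ takes only finitely many values, after passing to a subsequence I may assume $k_i = k$ is constant.

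Next I would set up the convergence so that the product structure is preserved in the limit. Recall that $\mathrm{Iso}(M_i) = \mathrm{Iso}(\mathbb{R}^k) \times \mathrm{Iso}(N_i)$, so I may choose basepoints of the form $p_i = (0, q_i)$ with $q_i \in N_i$. By Gromov precompactness applied to the family $\{N_i\}$ (a uniformly bounded family of $(n-k)$-manifolds with $\mathrm{Ric} \ge 0$), after a further subsequence $(N_i,q_i) \overset{GH}\longrightarrow (N,q)$ for some compact limit space $N$. Since each $M_i$ is an isometric product, the pointed Gromov--Hausdorff limit of $(M_i,p_i)$ is exactly the product $(\mathbb{R}^k \times N, (0,q))$, which must coincide with $(X,p)$.

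Finally, since each $N_i$ is a simply connected compact $(n-k)$-manifold with uniformly bounded diameter and $\mathrm{Ric} \ge 0 \ge -(n-k-1)$, Theorem \ref{bounded diam} applies and yields that $N$ is simply connected. Therefore $X = \mathbb{R}^k \times N$ is simply connected, as a product of simply connected spaces. There is no real obstacle in this argument; its entire content is packaged into the already--cited Lemma \ref{diam} and Theorem \ref{bounded diam}. The only minor verification is that $X$ genuinely inherits the product structure from the $M_i$, which follows because pointed Gromov--Hausdorff convergence is stable under taking isometric products.
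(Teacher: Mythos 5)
Your proposal is correct and follows essentially the same route as the paper, which likewise applies Lemma \ref{diam} to split $M_i = \mathbb{R}^k \times N_i$ with $\mathrm{diam}(N_i) \le D'$ and $N_i$ compact and simply connected, fixes the $\mathbb{R}^k$ factor along a subsequence, and then invokes Theorem \ref{bounded diam} on the compact factors to get a simply connected limit. Your additional verifications (constancy of $k_i$ after a subsequence, and stability of the isometric product under pointed Gromov--Hausdorff limits) are precisely the steps the paper leaves implicit.
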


We next sketch proofs of the Main Theorem. The approach in this paper significantly differs from Zamora's proof of Theorem \ref{almost homo}. In Zamora's proof, when all $M_i$ are simply connected, we can suitably choose a subgroup of $G_i$ so that the limit space is homeomorphic to the limit group, and the limit group is connected. Then we could show that the limit group is simply-connected. 

In contrast, the proof of the Main theorem requires us to study both the limit group and the limit space. Additionally, the limit group is not necessarily connected. Therefore our first step is to find a normal subgroup $G_i' \vartriangleleft G_i$ converging to $G_0$, the connected component of the limit group $G$.
\begin{center}
		$\begin{CD}
			(X_i,p_i,G_i,G_i') @>eGH>> (X,p,G,G_0)\\
			@VV\pi V @VV\pi V\\
			(X_i/G_i',\bar{p}_i,G_i/G_i') @>eGH>> (X/G_0,\bar{p},G/G_0).
		\end{CD}$
	\end{center}
Then we obtain $G_i/G_i' \cong G/G_0$. 

In section 4, we show that there is a global map from  $X_i/G_i'$ to $X/G_0$ that provides a Gromov-Hausdorff approximation(GHA) on each $10D$-ball ($10D$ can be replaced by $R_i \to \infty$). Then we prove that $\pi_1(X_i/G_i',\bar{p}_i)$ is generated by loops contained in $B_{\epsilon_i}(\bar{p}_i)$ where $\epsilon_i \to 0$. Since $X/G_0$ is semi-locally simply connected and the fundamental group of $X_i/G_i'$ is generated by loops in a small ball, it follows that $X/G_0$ must be simply connected.

The results in section 3 and 4 hold for any closed (not necessarily discrete) isometric group $G_i$. In section 5, we show that if $G_i$ is discrete, $G_0$ is a nilpotent Lie group. Let $T^k$ denote the maximal torus in $G_0$, then $G_0/T^k$ is diffeomorphic to an Euclidean space. Since any isotropy subgroup of actions $G_0/T^k$ on $X/T^k$ is compact and $G_0/T^k$ has no compact subgroup, $G_0/T^k$ actions are free on $X/T^k$. Therefore $X/T^k$ is simply connected.

Note that $T^k$ actions are not necessarily free on $X$. In section 6, we prove that $\pi_1(X,p)$ is generated by loops contained in the $T^k$-orbits up to conjugation. This is proven first for $k=1$, after which we apply an induction argument.

A final remark is that the Main Theorem and Corollary \ref{diam} hold in the corresponding $\mathrm{RCD}(K,N)$ spaces. It is known that any $\mathrm{RCD}(K,N)$ space is semi-locally simply connected and the isometry group is a Lie group. Moreover, the splitting theorem holds for a $\mathrm{RCD}(0,N)$ space. See \cite{Gigli2013,GuSan2019,Sosa2018,MondinoWei2019,Wang2022}. Thus the proof presented in this paper works in the $\mathrm{RCD}(K,N)$ setting as well.

{\bf Acknowledgments.}
The author would thank Prof Wei for suggesting him studying the limit of simply connected manifolds with cocompact isometric actions. The author would also thank for Jaime Santos-Rodr\'iguez and Sergio Zamora-Barrera for sending him their manuscript. The author would thank Prof Kapovitch and Prof Rong for helpful discussions.  The author would also thank an anonymous reviewer who pointed out a mistake in the earlier version of this paper.

\section{Preliminaries}
\subsection{Equivariant Gromov-Hausdorff convergence and isometry group on a Ricci limit space}

We review some notations about equivariant Gromov-Hausdorff convergence, introduced by Fukaya and Yamaguchi \cite{Fukaya1986,FukayaYamaguchi1992}.

Let $(X,p)$ and $(Y,q)$ be two proper geodesic metric spaces. Let $H$ and $K$ be closed subgroups of $\mathrm{Iso}(X)$ and $\mathrm{Iso}(Y)$, respectively. For any $r>0$, define 
$$H(p,r)=\{h \in H | d(hp,p) \le r \}, \ K(q,r)=\{k \in K | d(kq,q) \le r \}.$$
Both $H(r)$ and $K(r)$ are compact in the compact-open topology.

For any $\epsilon>0$, a pointed $\epsilon$-equivariant Gromov-Hausdorff approximation is a triple of maps $(f,\phi,\psi)$:
$$f:B_{1/\epsilon}(p) \to B_{(1/\epsilon)+ \epsilon}(q),\quad \phi:H(p,1/\epsilon) \to K(q,1/\epsilon),\quad \psi:K(q,1/\epsilon) \to H(p,1/\epsilon)$$ 
with the following conditions:\\
(1) $f(p)=q$, $f(B_{1/\epsilon}(p))$ is $2\epsilon$-dense in $B_{(1/\epsilon) + \epsilon}(q)$ and $|d(f(x_1),f(x_2))-d(x_1,x_2)|\le\epsilon$ for all $x_1,x_2\in B_{1/\epsilon}(p)$;\\
(2) $d(\phi(h)f(x),f(hx)) < \epsilon$ for all $h \in H(p,1/\epsilon)$ and $x \in B_{1/\epsilon}(p)$; \\
(3) $d(kf(x),f(\psi(k)x))<\epsilon$ for all $k \in K(q,1/\epsilon)$ and $x \in B_{1/\epsilon}(p)$.\\
The equivariant Gromov-Hausdorff distance $d_{eGH}((X,p,H),(Y,q,K))$ is defined as the infimum of $\epsilon$ so that there exists a $\epsilon$-eGH approximation.

A sequence of metric spaces with isometric actions $(X_i,p_i,G_i)$ equivariantly converge to a limit space $(X,p,G)$, if $d_{eGH}((X_i,p_i,G_i),(X,p,G)) \le \epsilon_i \to 0$.

Given a GH approximation $f$ in the above (1), we can construct an admissible metric on the disjoint union $B_{1/\epsilon}(p) \sqcup B_{1/\epsilon}(q)$ so that 
$$ B_{1/\epsilon}(p) \hookrightarrow B_{1/\epsilon}(p) \sqcup B_{1/\epsilon}(q), B_{1/\epsilon}(q) \hookrightarrow B_{1/\epsilon}(p) \sqcup B_{1/\epsilon}(q)$$
are isometric embedding and for any $x \in B_{1/\epsilon}(p)$, $d(x,f(x)) \le 2\epsilon$.
We always assume such an admissible metric when the Gromov-Hausdorff distance between two metric spaces is small. 

In some cases, we may consider that $(X,p,H)$ is $\epsilon$-close to $(Y,q,K)$ while the approximation $(f,\phi,\psi)$ is not given explicitly. Fix $r=1/(10\epsilon)$. For any $x \in B_r(p) \subset X$, when we say "find a point $y \in Y$ close to $x$", we just mean choose $y=f(x)$; similarly, for any $h \in H(p,r)$, when we say "find $k \in K$ close to $h$", we just mean choose $k=\phi(h)$.

The following pre-compact result is given in \cite{Fukaya1986,FukayaYamaguchi1992}.
\begin{theorem}\label{eGH}
	Let $(X_i,p_i)$ be a sequence of proper geodesic metric spaces converging to a limit space $(X,p)$ in the pointed Gromov-Hausdorff sense. For each $i$, let $G_i$ be a closed subgroup of $\mathrm{Isom}(X_i)$, the isometry group of $X_i$. Then passing to a subsequence if necessary,
	$$(X_i,p_i,G_i)\overset{eGH}\longrightarrow (X,p,G),$$
	where $G$ is a closed subgroup of $\mathrm{Isom}(X)$. Moreover, the quotient spaces $(X_i/G_i, \bar{p}_i)$ pointed Gromov-Hausdorff converge to $(X/G,p)$. 
\end{theorem}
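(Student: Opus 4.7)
The plan is an Arzelà--Ascoli plus diagonal extraction argument in the pointed Gromov--Hausdorff setting, essentially Fukaya--Yamaguchi's original argument. First I would fix admissible metrics on the disjoint unions $X_i \sqcup X$ realizing the convergence $(X_i,p_i) \to (X,p)$, and pick a countable dense set $\{q_j\} \subset X$ with approximations $q_{j,i} \in X_i$ satisfying $q_{j,i} \to q_j$. For each fixed $R > 0$, any $g_i \in G_i(p_i,R)$ is a $1$-Lipschitz map sending $p_i$ into $\bar B_R(p_i)$, so each orbit $\{g_i q_{j,i}\}$ stays in a bounded set of $X_i$ whose diameter depends only on $R$ and $d(p,q_j)$. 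A diagonal extraction over $j$ then produces, from any sequence $g_i \in G_i(p_i,R)$, a subsequence with $g_i q_{j,i}$ convergent in the admissible metric for every $j$; the limit assignment $q_j \mapsto \lim_i g_i q_{j,i}$ is distance preserving, extends by uniform continuity to an isometric embedding $g: X \to X$, and is surjective by the same argument applied to $g_i^{-1}$, hence an isometry with $d(gp,p) \le R$.

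A further diagonal extraction over $R = 1, 2, \dots$ yields a single subsequence of indices $i$ for which the set $G(p,R)$ of all subsequential limits of elements of $G_i(p_i,R)$ is well-defined as a compact subset of $\mathrm{Isom}(X)$ for every $R$. Set $G := \bigcup_R G(p,R)$. Verifying that $G$ is a closed subgroup of $\mathrm{Isom}(X)$ splits into three checks: closure under composition ($gh = \lim g_i h_i$), closure under inverses ($g^{-1} = \lim g_i^{-1}$), and closure in the compact-open topology (if $g^{(n)} \to g$ with $g^{(n)} \in G$ and each $g^{(n)} = \lim_i g^{(n)}_i$, a further diagonal over $n$ and $i$ produces $g_i \in G_i$ with $g_i \to g$). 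The explicit $\epsilon$-eGH approximation $(f,\phi,\psi)$ is then built from the admissible metric: take $f$ to be the identification restricted to $B_{1/\epsilon}(p)$, and define $\phi, \psi$ as nearest-element maps between $G_i(p_i,1/\epsilon)$ and $G(p,1/\epsilon)$. The required compatibility $d(\phi(g_i) f(x), f(g_i x)) < \epsilon$ follows directly from the fact that $G(p,1/\epsilon)$ is the Hausdorff limit of $G_i(p_i,1/\epsilon)$ with respect to the admissible metric.

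For the quotient convergence, the admissible metric induces a candidate approximation $X_i/G_i \to X/G$. The estimate $d_{X/G}(\bar x, \bar y) \le \liminf_i d_{X_i/G_i}(\bar x_i, \bar y_i)$ follows by picking near-optimal $g_i \in G_i$ realizing $d(x_i, g_i y_i) \approx d_{X_i/G_i}(\bar x_i, \bar y_i)$, extracting a subsequential limit $g \in G$, and passing to the limit; the reverse inequality follows by picking near-optimal $g \in G$, realizing it as a limit of $g_i \in G_i$, and using $d_{X_i/G_i}(\bar x_i, \bar y_i) \le d(x_i, g_i y_i)$. This gives pointed Gromov--Hausdorff convergence of the quotients on every bounded ball. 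The main technical hurdle is the careful bookkeeping of the nested diagonal extractions, over a countable dense set and over radii $R$, and verifying that $G$ is genuinely closed in $\mathrm{Isom}(X)$; once $G$ has been constructed, the quotient statement is essentially formal.
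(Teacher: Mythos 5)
The paper offers no proof of this theorem---it is quoted as a known precompactness result of Fukaya and Yamaguchi \cite{Fukaya1986,FukayaYamaguchi1992}---and your Arzel\`a--Ascoli plus diagonal-extraction sketch is exactly that standard argument, correctly executed: pointwise convergence on a countable dense set upgrades to an isometry of $X$ by $1$-Lipschitzness and properness, the diagonal over radii $R$ builds $G$ as a closed subgroup, and the two-sided estimate on quotient distances gives the convergence of $(X_i/G_i,\bar{p}_i)$. Your proposal is correct and coincides with the approach the paper cites.
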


Given a sequence of complete $n$-manifolds $(M_i,p_i,G_i)$ with $\mathrm{Ric} \ge -(n-1)$ and $G_i$ a closed subgroup of $\mathrm{Iso}(M_i)$. By Gromov's precompactness theorem and Theorem \ref{eGH}, passing to a subsequence if necessary,
$$(M_i,p_i,G_i) \overset{eGH}\longrightarrow (X,p,G).$$
Then $G$ is a Lie group by Cheeger-Colding, Colding-Naber \cite{CheegerColding2000a, ColdingNaber2013}.
\begin{theorem}
The isometry group of a Ricci limit space is a Lie group.
\end{theorem}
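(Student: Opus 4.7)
The plan is to deduce this theorem from the Gleason--Montgomery--Zippin solution of Hilbert's fifth problem: a locally compact Hausdorff topological group with no small subgroups (NSS) is a Lie group. It therefore suffices to verify two properties of $G := \mathrm{Iso}(X)$ equipped with the compact-open topology: (i) $G$ is locally compact, and (ii) $G$ has NSS, i.e., there is an open neighborhood of the identity containing no nontrivial subgroup.

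Step (i) is elementary. Any Ricci limit space $X$ is a proper geodesic metric space, so Arzelà--Ascoli gives that for any basepoint $p$ and any $R>0$, the set $G(p,R)=\{g\in G : d(gp,p)\le R\}$ is compact in the compact-open topology. This yields local compactness and reduces the theorem to (ii).

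Step (ii) is the technical core, and the strategy is contradiction via equivariant blow-up. Suppose, toward a contradiction, that there exist $g_i \in G\setminus\{e\}$ generating subgroups $H_i\subset B_{\epsilon_i}(e)$ with $\epsilon_i\to 0$. Using the Cheeger--Colding structure theory together with Colding--Naber's theorem that the regular set has a well-defined constant dimension $k$, I would fix a regular point $x_0\in X$ whose tangent cone is isometric to $\mathbb{R}^k$. Set $r_i := d(g_i x_0, x_0)>0$ and rescale the metric based at $x_0$ by $r_i^{-1}$; by the definition of tangent cone, $(X,x_0,r_i^{-1}d)$ subconverges to $(\mathbb{R}^k,0)$. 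Applying equivariant Gromov--Hausdorff compactness (Theorem \ref{eGH}), and passing to a further subsequence, the actions of $g_i$ converge to a nontrivial isometry $g_\infty$ of $\mathbb{R}^k$ with $d(g_\infty\cdot 0,0)=1$. Combining this with the NSS-violating hypothesis — namely that every iterate $g_i^n$ satisfies $d(g_i^n x_0,x_0)\le \epsilon_i$ — one transfers the orbit structure of $\langle g_\infty\rangle$ back to $X$ via the almost-splitting theorem and Colding--Naber's Hölder continuity of tangent cones along geodesics, producing a geometric contradiction with the sharpness of the regular-point choice at $x_0$.

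The main obstacle is executing the final contradiction rigorously. Isometries of $\mathbb{R}^k$ can have bounded orbits (rotations fix affine subspaces), so a contradiction cannot be extracted from Euclidean rigidity alone. One must use the almost-splitting theorem to upgrade the infinitesimal translational or rotational structure of $g_\infty$ to a nearly isometric local splitting of $X$ near $x_0$, and then argue that the compact rotational subgroups appearing as limits of the tiny discrete groups $H_i$ would force extra symmetry on neighborhoods of $x_0$ incompatible with $x_0$ being a sharp $\mathbb{R}^k$-regular point. This propagation is precisely where Cheeger--Colding's $\epsilon$-regularity and Colding--Naber's Hölder continuity of tangent cones are indispensable; indeed, these ingredients constitute the heart of the proof and explain why both Cheeger--Colding and Colding--Naber must be invoked.
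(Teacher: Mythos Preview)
The paper does not supply a proof of this theorem; it is stated and attributed to Cheeger--Colding and Colding--Naber without argument, as background for the rest of the paper. So there is no proof in the paper to compare against.

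Assessing your proposal on its own terms: the overall strategy via Gleason--Montgomery--Zippin and the NSS property is correct and is indeed the route taken in the cited works. Step (i) is fine. Step (ii), however, has two genuine gaps.

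First, the assertion $r_i = d(g_i x_0, x_0) > 0$ is unjustified: nothing prevents the $g_i$ from fixing your chosen regular point $x_0$, in which case the rescaling is undefined. The isotropy case must be handled separately --- for instance by showing that the isotropy group at a regular point injects into $O(k)$ via its action on the tangent cone $\mathbb{R}^k$, and then using that $O(k)$ has NSS. The injectivity of that map is itself a nontrivial step.

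Second, and more seriously, you never actually produce the contradiction. Your last paragraph is an honest acknowledgment of this: you describe the obstacle and gesture at almost-splitting and H\"older continuity of tangent cones, but you do not say what is contradicted. The suggestion that nontrivial rotational symmetry near $x_0$ is ``incompatible with $x_0$ being a sharp $\mathbb{R}^k$-regular point'' is not correct as stated --- regular points routinely admit local rotational symmetry (consider $\mathbb{R}^k$ itself, or any point on a round sphere). The actual argument in Cheeger--Colding and Colding--Naber is substantially more delicate than your sketch, and without it the NSS claim remains unproved.
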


\subsection{Nilpotent Lie group}
We list some basic results about nilpotent Lie groups, see \cite{Liegroup} chapter 14 for further reference.
\begin{lemma}\label{G_0}
Let $G$ be a nilpotent Lie group and $G_0$ be the identity component of $G$. Then for any compact subgroup $K \subset G$, $K$ commutes with $G_0$.
\end{lemma}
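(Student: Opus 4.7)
The plan is to reduce the statement to showing that the adjoint action of every $k \in K$ on $\mathfrak{g}_0 := \mathrm{Lie}(G_0)$ is the identity. Once $\mathrm{Ad}(k) = I$, we will have $k\exp(X)k^{-1} = \exp(\mathrm{Ad}(k)X) = \exp(X)$ for all $X \in \mathfrak{g}_0$, and since $G_0$ is connected it is generated by $\exp(\mathfrak{g}_0)$, so $k$ will commute with every element of $G_0$.

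To prove $\mathrm{Ad}(k) = I$, I would first build an $\mathrm{Ad}(G)$-invariant filtration of $\mathfrak{g}_0$ by ideals coming from the lower central series of $G$. Set $N_0 = G$ and $N_{i+1} = \overline{[G,N_i]}$; nilpotency of $G$ gives $N_n = \{e\}$ for some $n$, and each $N_i$ is closed and normal in $G$. Let $\mathfrak{n}_i := \mathrm{Lie}((N_i)_0)$, so that $\mathfrak{g}_0 = \mathfrak{n}_0 \supseteq \mathfrak{n}_1 \supseteq \cdots \supseteq \mathfrak{n}_n = 0$.

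The crucial step is the claim that, for every $g \in G$ and every $X \in \mathfrak{n}_i$, one has $\mathrm{Ad}(g)X - X \in \mathfrak{n}_{i+1}$. I would extract this from the identity
\[
[g,\exp(tX)] \;=\; \exp(t\,\mathrm{Ad}(g)X)\,\exp(-tX),
\]
whose left-hand side lies in $[G,N_i] \subseteq N_{i+1}$ by construction, and which, for $t$ sufficiently small, is close to the identity and therefore sits in $(N_{i+1})_0$. Taking a logarithm and applying the Baker–Campbell–Hausdorff expansion, the right-hand side equals $\exp\!\bigl(t(\mathrm{Ad}(g)X - X) + O(t^2)\bigr)$; dividing by $t$ and letting $t \to 0$ places $\mathrm{Ad}(g)X - X$ in $\mathfrak{n}_{i+1}$. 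Iterating across the filtration yields $(\mathrm{Ad}(g) - I)^n = 0$, so $\mathrm{Ad}(g)$ is unipotent on $\mathfrak{g}_0$ for every $g \in G$.

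Now I would specialize to $k \in K$. The operator $\mathrm{Ad}(k)$ lies in the compact subgroup $\mathrm{Ad}(K) \subset \mathrm{GL}(\mathfrak{g}_0)$ and is unipotent. But a unipotent element $I + N$ of a compact matrix group must satisfy $N = 0$: otherwise the powers $(I+N)^m$ grow polynomially in $m$, contradicting boundedness. Hence $\mathrm{Ad}(k) = I$, which by the opening remark yields $kgk^{-1} = g$ for all $g \in G_0$.

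The main obstacle I anticipate is not the BCH or compact-plus-unipotent step (both are routine) but the bookkeeping in setting up the filtration: $N_i$ and $[G,N_i]$ need not be connected or closed before we take closures and identity components, and one must carefully verify that $\mathfrak{n}_i$ really is an $\mathrm{Ad}(G)$-invariant ideal of $\mathfrak{g}_0$ and that the inclusion $[g,\exp(tX)] \in (N_{i+1})_0$ genuinely holds for small $t$. Once this setup is done correctly, the rest of the argument is clean.
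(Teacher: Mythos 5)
Your proposal is correct and follows essentially the same route as the paper: both pass to the adjoint representation on $\mathfrak{g}_0$, show $\mathrm{Ad}(k)$ is unipotent from nilpotency of $G$, and then use compactness of $\mathrm{Ad}(K)$ (unbounded powers of a nontrivial unipotent) to force $\mathrm{Ad}(k)=\mathrm{Id}$, hence $k$ centralizes $G_0$. Your lower-central-series filtration and BCH computation merely supply the details behind the paper's one-line assertion that $(\mathrm{Ad}(g)-\mathrm{Id})^k=0$, so the two arguments coincide in substance.
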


\begin{proof}
Let $\mathfrak{g}$ be the Lie algebra of $G_0$. Consider the adjoint representation  
$$\mathrm{Ad}: G \to \mathrm{GL}(\mathfrak{g}).$$
The image $\mathrm{Ad}(K)$ is compact. Choose any $g \in K$. Since $G$ is nilpotent, 
$$[g,..[g,[g, \cdot ]]...]=0$$ 
after $k$ commutators for some $k >0$. Thus we have $(\mathrm{Ad}(g)- \mathrm{Id})^k=0$. In particular, $\mathrm{Ad}(g)$ is unipotent and we may choose a basis of $\mathfrak{g}$ so that $\mathrm{Ad}(g)$ is an upper triangular matrix with diagonal elements equal to $1$. If $\mathrm{Ad}(g)$ is not the identity matrix, the closure of $\langle \mathrm{Ad}(g) \rangle$ is a non-compact group, a contradiction. Thus $\mathrm{Ad}(g)=\mathrm{Id}$ and $g$ commutes with $G_0$.     
\end{proof}

\begin{remark}
A compact nilpotent group is not necessary abelian. For example, dihedral group $D_{2^k}$ is a finite nilpotent group but not abelian when $k \ge 2$.
\end{remark}

Recall that a compact subgroup $K \subset G$ is a maximal compact subgroup if any compact subgroup of $G$ is conjugate to a subgroup of $K$. Any connected Lie group $G_0$ has a maximal compact subgroup $K$ and $G_0$ is topologically $K \times \mathbb{R}^l$ for some $l$. 

\begin{corollary}
Let $G_0$ be a connected nilpotent Lie group. Then any compact subgroup is contained in the center of $G_0$. 
In particular, the maximal compact subgroup of $G_0$ is a unique torus $T^k$ and $G_0/T^k$ is  diffeomorphic to $\mathbb{R}^l$.
\end{corollary}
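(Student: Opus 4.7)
The plan is to apply Lemma \ref{G_0} to $G = G_0$ itself. Since $G_0$ is a (connected) nilpotent Lie group whose identity component is $G_0$, the lemma tells us that every compact subgroup $K \subseteq G_0$ commutes with all of $G_0$, i.e.\ $K \subseteq Z(G_0)$. The paragraph preceding the statement notes that any connected Lie group admits a maximal compact subgroup $K$ with $G_0 \cong K \times \mathbb{R}^l$ as a topological manifold; since $G_0$ is connected and $\mathbb{R}^l$ is connected, $K$ must be connected. A compact, connected, abelian Lie group is a torus, so $K = T^k$.

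For uniqueness I would use the standard fact that any two maximal compact subgroups of a connected Lie group are conjugate. Because $T^k \subseteq Z(G_0)$ is central, any conjugate of $T^k$ by an element of $G_0$ equals $T^k$ itself. Combined with the conjugacy theorem, this gives that $T^k$ is the unique maximal compact subgroup, and in fact contains every compact subgroup of $G_0$.

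To identify $G_0/T^k$ with $\mathbb{R}^l$ diffeomorphically, I would first show that $G_0/T^k$ has no nontrivial compact subgroup: if $H \subseteq G_0/T^k$ is compact, its preimage $\widetilde{H}$ under the projection $G_0 \to G_0/T^k$ is an extension of $H$ by the compact group $T^k$, hence compact, and contains $T^k$. By maximality and centrality of $T^k$, the compact group $\widetilde{H}$ is conjugate to (hence, by centrality, equal to) a subgroup of $T^k$, forcing $\widetilde{H} = T^k$ and $H$ trivial. A connected nilpotent Lie group without nontrivial compact subgroups is necessarily simply connected: its universal cover $\widetilde{G_0/T^k}$ is a simply connected nilpotent Lie group, hence diffeomorphic to $\mathbb{R}^N$ via the exponential map, and any nontrivial discrete normal subgroup would sit inside the connected center $\cong \mathbb{R}^m$ as a lattice $\mathbb{Z}^d$ with $d \ge 1$, producing a $T^d$ factor in the quotient and contradicting the no-compact-subgroup property. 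Hence $G_0/T^k$ is itself simply connected nilpotent, and the exponential map gives a diffeomorphism $G_0/T^k \cong \mathbb{R}^l$.

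The only mildly delicate step is the pullback argument in the last paragraph: a priori the conjugacy theorem only yields $\widetilde{H}$ conjugate to a subgroup of $T^k$, and I want equality. Centrality of $T^k$ upgrades this conjugacy to equality, so the key input is really Lemma \ref{G_0} applied to $G_0$; the rest is structure theory of (simply connected) nilpotent Lie groups.
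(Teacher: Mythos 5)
Your proposal is correct and follows essentially the route the paper intends: the corollary is stated without proof as a direct consequence of Lemma \ref{G_0} applied to $G=G_0$ (giving centrality, hence that the maximal compact subgroup is an abelian, connected --- by the topological splitting $G_0\approx K\times\mathbb{R}^l$ --- and therefore toral, and unique since conjugation fixes central subgroups), together with standard structure theory of nilpotent Lie groups. Your final paragraph merely fills in the detail the paper leaves implicit, namely that $G_0/T^k$, having no nontrivial compact subgroup, is simply connected and hence diffeomorphic to $\mathbb{R}^l$ via the exponential map; that argument (discrete normal subgroups of the universal cover are central, the center is a connected vector group, and a nontrivial lattice therein would produce a compact torus in the quotient) is accurate.
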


\subsection{Local topology of a Ricci limit space}
The first topological result for a general Ricci limit space was proved by Sormani and Wei, who showed that any Ricci limit space has a universal cover \cite{SormaniWei2001,SormaniWei2004}; while it was unknown whether the universal cover is simply connected by their method. Recently, it was proved that any Ricci limit space (more generally, any $\mathrm{RCD}(K,N)$ space) is semi-locally simply connected \cite{PanWei2019,PanWang2021,Wang2021,Wang2022}, thus the universal cover must be simply connected.

Recall that for any two paths $c_1$ and $c_2$ with $c_1(1)=c_2(0)$, we define the concatenation of the paths $c_1$ and $c_2$ by
\begin{equation*}
c_1 \cdot c_2(t)=\left\{
\begin{aligned}
& c_1(2t), 0 \le t \le 1/2 \\
& c_2(2t-1), 1/2 \le t \le 1
\end{aligned}
\right.
\end{equation*} 
If there is another path $c_3$ with $c_2(1)=c_3(0)$, it is well-known that $(c_1 \cdot c_2) \cdot c_3$ is homotopic to $c_1 \cdot (c_2 \cdot c_3)$, thus we just use the notation $c_1 \cdot c_2 \cdot c_3$.

We next define a $\delta$-contractible loop. It is related to the $\delta$-cover introduced in \cite{SormaniWei2004}, and further discussed in \cite{MondinoWei2019}.

\begin{defn}($\delta$-contractible loop)\label{def}
We call a loop $\gamma$ at $p$ is $\delta$-contractible if it is homotopic to a loop generated by some $\alpha^{-1}_j \cdot \beta_j \cdot \alpha_j$, where $\beta_j$ is a loop lying in a $\delta$-ball $B_{x_j}(\delta)$, and  $\alpha_j$ is a path from $\beta(0)$ to $p$. 
\end{defn}

Now assume $(M_i,p_i) \overset{GH}\longrightarrow (X,p)$ with $\mathrm{Ric}_{M_i} \ge -(n-1)$. Let $\epsilon_i$ denote the Gromov-Hausdorff distance between $(M_i,p_i)$ and $(X,p)$. The following lemma from \cite{PanWei2019} shows that, for any path in $B_{1/10\epsilon_i}(p) \subset X$, we can find a point-wise close path in $B_{1/10\epsilon_i}(p_i) \subset M_i$ (and vice versa).
\begin{lemma}
For any $R < 1/(10\epsilon_i)$ and any path $\gamma \subset B_R(p)$, there exists $\gamma_i \subset B_{R+3\epsilon_i}(p_i)$ so that $d(\gamma(t),\gamma_i(t)) \le 3\epsilon_i$ for all $t \in [0,1]$. Conversely,  for any $\gamma_i' \subset B_R(p_i)$, there exists $\gamma' \subset B_{R+3\epsilon_i}(p)$ so that $d(\gamma'_i(t),\gamma'(t)) \le 3\epsilon_i$ for all $t \in [0,1]$.
\end{lemma}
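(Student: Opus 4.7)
The plan is a standard discretize-and-interpolate argument, using only that $M_i$ is a length space (in fact a Riemannian manifold) and that $X$, as a Ricci limit, is a proper geodesic space. I will describe the forward direction; the converse is symmetric.

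First I would fix the admissible metric on $B_{1/\epsilon_i}(p) \sqcup B_{1/\epsilon_i}(p_i)$ provided by the $\epsilon_i$-GHA, so that the embeddings are isometric and $d(x, f(x)) \le \epsilon_i$ for every $x$ in the relevant ball. Given $\gamma : [0,1] \to B_R(p)$ continuous, by uniform continuity I can pick a partition $0 = t_0 < t_1 < \cdots < t_N = 1$ so fine that $d(\gamma(s), \gamma(t)) < \eta$ whenever $s,t$ lie in the same subinterval, where $\eta$ is a small auxiliary parameter I will send to $0$ at the end. Set $x_k := f(\gamma(t_k)) \in M_i$; these satisfy $d(x_k, \gamma(t_k)) \le \epsilon_i$ and, by the triangle inequality, $d(x_k, x_{k+1}) \le 2\epsilon_i + \eta$.

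Next I would connect consecutive $x_k$, $x_{k+1}$ by a minimizing geodesic $\sigma_k$ in $M_i$ (since $M_i$ is a manifold, such a minimizer exists locally, and we can arrange $\eta$ small enough that $2\epsilon_i+\eta$ is well below the injectivity radius scale we need, or simply use the fact that $M_i$ is a length space and pick an almost-minimizing curve of length $\le 2\epsilon_i + 2\eta$). Reparametrizing $\sigma_k$ proportionally on $[t_k, t_{k+1}]$ gives a continuous curve $\gamma_i : [0,1] \to M_i$ with $\gamma_i(t_k) = x_k$. For $t \in [t_k, t_{k+1}]$,
$$d(\gamma(t), \gamma_i(t)) \le d(\gamma(t), \gamma(t_k)) + d(\gamma(t_k), x_k) + d(x_k, \gamma_i(t)) \le \eta + \epsilon_i + (2\epsilon_i + \eta) = 3\epsilon_i + 2\eta.$$
Sending $\eta \to 0$ by refining the partition yields $d(\gamma(t), \gamma_i(t)) \le 3\epsilon_i$, and the containment $\gamma_i \subset B_{R+3\epsilon_i}(p_i)$ is immediate from this pointwise bound together with $\gamma \subset B_R(p)$.

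For the converse, I would proceed identically, replacing $f$ by a near-inverse $g : B_R(p_i) \to B_{R+\epsilon_i}(p)$ coming from the GHA, and connecting consecutive images $y_k := g(\gamma_i'(t_k))$ by almost-minimizing curves in $X$; this is possible because the Ricci limit space $X$ is a proper geodesic metric space. The only mildly delicate point is making sure the interpolating segments in $M_i$ (or $X$) can actually be taken with length essentially equal to the distance between endpoints — in $M_i$ this is automatic, and in $X$ it follows from the geodesic property up to an arbitrarily small extra error absorbed into $\eta$, so there is no real obstacle beyond bookkeeping.
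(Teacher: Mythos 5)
Your discretize-and-interpolate argument is the standard proof of this lemma; the paper itself does not prove it but quotes it from Pan--Wei \cite{PanWei2019}, and the proof there is exactly this: push partition points across via the Gromov--Hausdorff approximation in the admissible metric on the disjoint union, interpolate by minimizing (or almost-minimizing) segments, and estimate by the triangle inequality. One step of yours is not literally valid, though it is easily repaired: ``sending $\eta \to 0$ by refining the partition'' does not yield a single curve $\gamma_i$ with $d(\gamma(t),\gamma_i(t)) \le 3\epsilon_i$, since each value of $\eta$ produces a \emph{different} curve (whose length, moreover, is not uniformly bounded as the partition refines, so no compactness argument extracts a limit curve). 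The fix is to avoid the limit altogether: fix $\eta \le \epsilon_i/2$ once and for all, and for $t \in [t_k,t_{k+1}]$ compare $\gamma_i(t)$ with the \emph{nearer} endpoint of the segment $\sigma_k$, which gives $d(\gamma_i(t),x_k) \le \tfrac{1}{2}\bigl(2\epsilon_i+\eta\bigr)$ rather than the full length $2\epsilon_i+\eta$; then $d(\gamma(t),\gamma_i(t)) \le \eta + \epsilon_i + \epsilon_i + \eta/2 \le 3\epsilon_i$ holds outright for this one fixed curve, and the same sharpened estimate (together with $f(p)=p_i$ and the distance-distortion bound at the points $x_k$) gives the containment $\gamma_i \subset B_{R+3\epsilon_i}(p_i)$, which your cruder pointwise bound alone would miss by roughly $\epsilon_i$. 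With that adjustment the argument, including the symmetric converse direction using properness and the geodesic property of $X$, is complete and agrees with the cited proof.
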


Since $(X,p)$ is semi-locally simply connected, fix $R>0$ and $i$ sufficiently large, we may assume any loop contained in a $3\epsilon_i$-ball in $B_R(p)$ is contractible in $X$. Then we have the following relation between the local $\pi_1$ of $M_i$ and $X$. 
\begin{lemma}[\cite{PanWei2019,Wang2021}]\label{Homo}
Fix $R$ and choose $i$ large enough. Assume that a loop $\gamma_i \subset B_R(p_i)$ is point-wise $3\epsilon_i$-close to a loop $\gamma \subset B_R(p)$. If $\gamma_i$ is $10\epsilon_i$-contractible in $B_R(p_i)$, then $\gamma$ is contractible in $B_{R+1}(p)$. Conversely, if $\gamma$ is contractible in $B_{R}(p)$, then $\gamma_i$ is a $10\epsilon_i$-contractible in $B_{R+1}(p_i)$.
\end{lemma}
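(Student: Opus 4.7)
My plan is to prove both directions via a common ``transfer of homotopies'' principle that uses the preceding lemma to move arcs between $M_i$ and $X$, combined with semi-local simple connectedness (made uniform on $\bar B_{R+1}(p)$ by compactness for $i$ large) to fill in the resulting tiny loops. The general recipe I will use: given a uniformly continuous homotopy $H\colon [0,1]^2 \to M_i$ (or $X$) of bounded diameter, I subdivide $[0,1]^2$ into a grid so fine that each square is sent into an $\epsilon_i$-ball; for each vertex I pick a close point in the target space and for each edge a close path (via the previous lemma); the boundary of each small square is then a loop of diameter $O(\epsilon_i)$ that is null-homotopic in the target, and gluing these fillings produces a homotopy in the target closely shadowing $H$. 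For $i$ large enough that, say, $20\epsilon_i$ lies below the uniform simple-connectedness scale on $\bar B_{R+1}(p)$, every such small loop can be contracted while staying inside $B_{R+1}$.

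For the forward direction, I unpack the $10\epsilon_i$-contractibility of $\gamma_i$ in $B_R(p_i)$: there is a homotopy in $B_R(p_i)$ from $\gamma_i$ to some product $L_i = \prod_j \alpha_{i,j}^{-1} \cdot \beta_{i,j} \cdot \alpha_{i,j}$ with each $\beta_{i,j}$ in a $10\epsilon_i$-ball. Using the previous lemma I obtain close paths $\alpha_j, \beta_j$ in $X$, all lying in $B_{R + 3\epsilon_i}(p) \subset B_{R+1}(p)$. Each $\beta_j$ sits in a $13\epsilon_i$-ball centered in $\bar B_R(p)$, hence is null-homotopic in $B_{R+1}(p)$; consequently $L = \prod_j \alpha_j^{-1} \cdot \beta_j \cdot \alpha_j$ is null-homotopic in $B_{R+1}(p)$. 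Applying the transfer recipe to the homotopy $\gamma_i \simeq L_i$ yields a homotopy $\gamma \simeq L$ in $B_{R+1}(p)$, and concatenating with the null-homotopy of $L$ contracts $\gamma$ inside $B_{R+1}(p)$.

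For the converse, I take a null-homotopy $H\colon D^2 \to B_R(p)$ of $\gamma$ and subdivide $D^2$ so that each cell is mapped by $H$ into an $\epsilon_i$-ball; reading off this subdivision combinatorially, $\gamma$ is homotopic in $B_R(p)$ to a product of conjugates $\alpha_k^{-1} \cdot \delta_k \cdot \alpha_k$ where each $\delta_k$ is the $H$-image of a small square's boundary and hence lies in a $3\epsilon_i$-ball. I then use the previous lemma to replace every $\alpha_k$ and $\delta_k$ by a close counterpart in $M_i$; each transferred $\delta_k$ lies in a $(3\epsilon_i + 3\epsilon_i)$-ball, comfortably a $10\epsilon_i$-ball, and the transfer recipe applied to $H$ itself provides a homotopy in $B_{R+1}(p_i)$ between $\gamma_i$ and the transferred product, giving a $10\epsilon_i$-contractible presentation of $\gamma_i$ in $B_{R+1}(p_i)$.

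The main technical obstacle I anticipate is the ``ball-control'': every intermediate homotopy, contracting disk, and transferred arc must stay inside $B_{R+1}$ rather than escape into the ambient space. This is where one needs the locally-uniform simple connectedness on $\bar B_{R+1}(p)$ (and on the corresponding balls in $M_i$), which for Ricci limits is available from the references in Section 2.3 after a standard compactness argument producing a single scale valid on $\bar B_{R+1}(p)$. Once this uniform scale is in hand, the $+1$ slack in the radius easily absorbs all the $O(\epsilon_i)$ displacements introduced by the transfers, and both directions follow.
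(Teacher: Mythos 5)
First, a point of comparison: the paper does not actually prove Lemma \ref{Homo}; it quotes it from \cite{PanWei2019,Wang2021}, and the proofs there use precisely the grid-transfer technique you describe. Your forward direction is essentially that argument and is sound modulo constant bookkeeping: transferring the homotopy $\gamma_i \simeq L_i$ cell by cell and filling the $O(\epsilon_i)$-size square boundaries is legitimate because the target is $X$, where the cited references give the strong local form of semi-local simple connectedness (loops in $B_\delta(x)$ contract inside $B_r(x)$), which a compactness argument makes uniform over $\bar{B}_{R+1}(p)$, so the contractions stay local and the $+1$ slack in the radius absorbs all displacements, as you say.

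The converse direction, however, contains a genuine gap as written. Your ``transfer recipe'' fills each small square boundary by a null-homotopy in the target, and in this direction the target is $M_i$; you even invoke uniform simple connectedness ``on the corresponding balls in $M_i$''. No such thing is available: there is no uniform scale below which loops in $M_i$ are contractible (consider lens spaces $S^3/\mathbb{Z}_q$ collapsing to $S^2$, where non-contractible loops have length tending to $0$). Note your recipe, if it worked into $M_i$, would show that $\gamma_i$ is genuinely contractible rather than merely $10\epsilon_i$-contractible --- it proves too much, and the stronger statement is false; the whole point of Definition \ref{def} is that small loops in $M_i$ cannot be contracted. The repair, which is what the cited proofs do, is to never fill small loops in $M_i$: transfer only the $1$-skeleton of the subdivision (vertices via the GH approximation, edges via short geodesics) and use the combinatorial van Kampen identity --- in the free group on the edges of a grid graph, the outer boundary word equals a product of conjugates of the cell boundary words --- so the homotopy between the transferred outer loop and the product of conjugated cell loops takes place inside the image graph itself, requiring no $2$-cells in $M_i$. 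Those cell loops, together with the small quadrilaterals comparing $\gamma_i$ with its piecewise-geodesic shadow, all lie in $10\epsilon_i$-balls and are simply retained as the conjugated factors $\alpha_j^{-1}\cdot\beta_j\cdot\alpha_j$ allowed by Definition \ref{def}; this is exactly why the conclusion is only $10\epsilon_i$-contractibility in $B_{R+1}(p_i)$. With that change your outline matches the standard argument.
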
  

\subsection{Slice theorem}

Let $Y$ be a completely regular topological space and $G$ be a Lie group. We say that $Y$ is a $G$-space if $G$  acts on $Y$ as homeomorphisms. For any point $y \in Y$, define the isotropy group at $y$ as $$G_{y}= \{ g \in G | \ gy=y \}.$$ Given a subset $S \subset Y$, we say $S$ is $G_y$-invariant if $G_y S=S$. For a $G_y$-invariant set $S$, we define the quotient 
\begin{equation*}
G \times_{G_y} S = G \times S / \sim, 
\end{equation*}
with quotient topology, and the equivalence relation $\sim$ is given by $(g,s) \sim (gh^{-1}, hs)$ for all $g \in G, h \in G_{y}, s \in S$. There is a natural left $G$-action on $G \times_{G_y} S$ by $g[g',s] = [gg',s]$, where $g \in G$ and $[g',s] \in G \times_{G_y} S$.

We define $S \subset Y$ to be a slice at $y$ if the followings hold: \\
(1) $y \in S$ and $S$ is $G_y$-invariant; \\
(2) $GS$ is an open neighborhood of $y$; the map $[g,s] \mapsto gs$ is a $G$-homeomorphism between $G \times_{G_y} S$ and $GS$.

In particular, the second condition above implies that $(G \times_{G_y} S) / G = S/G_y$ is homeomorphic to $GS/G$, which is an open set in the quotient space $Y/G$. By the definition of a slice $S$ at $y$, for any $g \in G$, $gS \cap S \neq \emptyset$ if and only if $g \in G_y$. Thus for any $x \in S$, we have $G_x \subset G_y$.
 
The following slice theorem is due to Palais \cite{Palais1961}.
\begin{theorem}\label{topo_slice}
Let $G$ be a Lie group, $Y$ be a $G$-space and $y \in Y$. The following two conditions are equivalent: \\
(1) $G_{y}$ is compact and there is a slice at $y$. \\
(2) There is a neighborhood $U$ of $y \in Y$ such that $\{g \in G | gU \cap U \neq 0 \}$ has compact closure in $G$. 
\end{theorem}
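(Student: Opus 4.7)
The direction (1) $\Rightarrow$ (2) is the easier half. Given a slice $S$ at $y$, I would take a relatively compact symmetric open neighborhood $V$ of the identity in $G$ and a $G_y$-invariant open $W \subset S$ with $y \in W$, and set $U := V \cdot W$. If $gU \cap U \neq \emptyset$, writing $g v_1 w_1 = v_2 w_2$ with $v_i \in V$, $w_i \in W$ gives $(v_2^{-1} g v_1) w_1 = w_2 \in S$, so by the defining property of a slice $v_2^{-1} g v_1 \in G_y$; hence $g$ lies in the compact set $\overline{V} \cdot G_y \cdot \overline{V}^{-1}$, and (2) follows.

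For (2) $\Rightarrow$ (1), compactness of $G_y$ is immediate: every element of $G_y$ fixes $y \in U$, so $G_y \subset \{g : gU \cap U \neq \emptyset\}$; since $G_y$ is closed in $G$ and this set has compact closure, $G_y$ is compact. To construct the slice I would exploit that $G/G_y$ is a smooth manifold and that $\pi: G \to G/G_y$ is a principal $G_y$-bundle, hence admits a local section $\sigma: N \to G$ on a neighborhood $N$ of the identity coset with $\sigma(eG_y) = e$. Shrink $U$ to a $G_y$-invariant open $U_0 \ni y$ contained in $U$, which is possible because $G_y$ is compact and the action is continuous. The continuous map $\Phi: N \times U_0 \to Y$ defined by $\Phi(n, x) := \sigma(n) \cdot x$ satisfies $\Phi(eG_y, y) = y$, and the candidate slice $S \subset U_0$ is a $G_y$-invariant set whose orbit under $\sigma(N)$ sweeps out a full open neighborhood of $y$ transversally to the $G$-orbit through $y$.

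The main obstacle is to shrink $S$ so that the canonical map $G \times_{G_y} S \to GS$, $[g,x] \mapsto g x$, is a homeomorphism onto an open neighborhood of $y$; this reduces to showing that $gS \cap S \neq \emptyset$ forces $g \in G_y$. I would argue by contradiction: assuming the contrary yields sequences $g_n \in G \setminus G_y$ and $s_n \in S$ with $g_n s_n \in S$ and $s_n, g_n s_n \to y$. Condition (2) makes $\{g_n\}$ precompact, so along a subsequence $g_n \to g_\infty$ with $g_\infty y = y$, i.e. $g_\infty \in G_y$. Writing $g_n = \sigma(\pi(g_n)) h_n$ with $h_n \in G_y$ and using that $\sigma$ is a homeomorphism onto its image near $e$, one obtains $\pi(g_n) \to eG_y$; combined with choosing $S$ small enough that the $\sigma$-transverse structure is locally injective, this forces $g_n \in G_y$ for large $n$, a contradiction. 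Openness of $GS$ and the homeomorphism property then follow from the local triviality of $\pi$ and the continuity of $\Phi$.
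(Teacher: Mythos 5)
First, note that the paper does not prove this statement at all: it is quoted verbatim from Palais \cite{Palais1961}, so there is no in-paper argument to compare yours against. Your direction (1) $\Rightarrow$ (2) is essentially correct: with $W \subset S$ open in $S$ and $G_y$-invariant, the set $U = V\cdot W$ is indeed open in $GS$ (its preimage under $G \times S \to G\times_{G_y} S \to GS$ is $(VG_y)\times W$), and the slice property $gS \cap S \neq \emptyset \Rightarrow g \in G_y$ traps $\{g : gU \cap U \neq \emptyset\}$ inside the compact set $\overline{V}\,G_y\,\overline{V}^{-1}$, using the compactness of $G_y$ assumed in (1). Likewise your observation that (2) immediately forces $G_y$ to be compact is fine.

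The genuine gap is in the construction of the slice for (2) $\Rightarrow$ (1), which is the entire content of Palais's theorem. Your candidate slice $S$ is never actually defined: the phrase ``a $G_y$-invariant set whose orbit under $\sigma(N)$ sweeps out a neighborhood of $y$ transversally to the $G$-orbit'' imports intuition from the smooth category, but here $Y$ is only a completely regular topological $G$-space, with no manifold structure, no transversality, and no implicit function theorem. Continuity alone gives you no mechanism for choosing $S$; the actual proof builds $S$ as $f^{-1}(eG_y)$ for an equivariant map $f: GU_0 \to G/G_y$ with $f(y)=eG_y$, and constructing $f$ requires averaging over the compact group $G_y$ (Haar integration) together with the Tietze--Gleason equivariant extension theorem --- which is precisely where the hypothesis that $Y$ is \emph{completely regular} enters. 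Your sketch never uses that hypothesis, which is a symptom that it cannot close. Moreover, your contradiction argument is circular: ``choosing $S$ small enough that the $\sigma$-transverse structure is locally injective'' presupposes exactly the property $gS \cap S \neq \emptyset \Rightarrow g \in G_y$ that you are trying to prove; in a general topological $G$-space, no smallness condition on an arbitrarily chosen $G_y$-invariant set rules out such overlaps --- the set must be engineered via the equivariant map $f$. (A further minor point: sequences and subsequences are inadequate here, since $Y$ is not assumed metrizable; one would need nets, though condition (2) does supply the required precompactness.)
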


For any closed isometric group actions on a proper geodesic metric space (in particular, a Ricci limit space), the second statement in Theorem \ref{topo_slice} is always fulfilled, thus a slice always exists.

The existence of a slice guarantees the path-lifting property: any path in $Y/G$ can be lifted to a path in $Y$. Note that a loop may be lifted to a non-loop path. However, if we lift a loop on a slice, saying $S$ is a slice at $y$ and a loop $\gamma \subset S/G_y$ with $\gamma(0)=\pi(y) \in S/G_y$, then the lifting path of $\gamma$ must be a loop since $\pi(y)$ has a unique pre-image point in $S$.  

For a proper geodesic metric space $Y$ with $G$ a closed subgroup of $\mathrm{Iso}(Y)$, the $r$-ball $B_r(y)$ is $G_y$ invariant for any $y \in Y$. For any slice $S$ at $y \in Y$, then $S \cap B_r(y)$ is still a slice by the definition; thus we always assume that a slice is contained in a small ball.
Using the existence of a slice, we can show that the quotient of a Ricci limit space by closed isometric group actions is still semi-locally simply connected.

\begin{lemma}\label{quotient slice}
Let $X$ be a semi-locally simply connected proper geodesic metric space and Lie group $G$ be a closed subgroup of $\mathrm{Iso}(X)$. Then $X/G$ is semi-locally simply connected.
\end{lemma}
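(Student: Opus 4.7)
The plan is to combine the slice theorem (Theorem \ref{topo_slice}) with the assumption that $X$ is semi-locally simply connected. First I would fix a point $\bar{x} \in X/G$ and a lift $x \in X$ with $\pi(x) = \bar{x}$. Semi-local simple connectivity of $X$ provides some $r > 0$ such that every loop in $B_r(x)$ based at $x$ is contractible in $X$. Because $G$ is a closed subgroup of $\mathrm{Iso}(X)$ acting on the proper geodesic space $X$, the hypothesis of Theorem \ref{topo_slice} is fulfilled at $x$ (as noted immediately after that theorem), so there is a slice $S$ at $x$, which I would intersect with $B_r(x)$ in order to assume $S \subset B_r(x)$. The candidate neighborhood of $\bar{x}$ witnessing semi-local simple connectivity will be $U := \pi(S)$, which is open in $X/G$ and homeomorphic to $S/G_x$.

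Next, given any loop $\bar{\gamma} \colon [0,1] \to U$ based at $\bar{x}$, I would lift it through the compact-group quotient $S \to S/G_x$ to a path $\gamma \colon [0,1] \to S$ with $\gamma(0) = x$. The crucial point is that $x$ is the unique preimage of $\bar{x}$ inside $S$: if $s \in S$ satisfies $\pi(s) = \bar{x}$, then $s = gx$ for some $g \in G$ with $gS \cap S \ne \emptyset$, which by the slice property forces $g \in G_x$ and hence $s = x$. Therefore $\gamma$ is a loop in $B_r(x)$ based at $x$, and by the choice of $r$ it is null-homotopic in $X$ via some continuous $H \colon [0,1]^2 \to X$. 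Pushing $H$ forward through the continuous quotient map $\pi$ yields a null-homotopy of $\bar{\gamma}$ in $X/G$, which is what semi-local simple connectivity at $\bar{x}$ requires.

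The main subtlety, rather than a genuine obstacle, is the upgrade from path-lifting to loop-lifting: a priori a lift of a loop need only be a path, and nothing forbids a lift in the ambient $X$ from wandering out of the chosen slice into other $G$-translates. Both issues are handled by restricting the lifting problem to the compact-group quotient $S \to S/G_x$ (where standard path lifting applies) and then using the uniqueness of the basepoint preimage in $S$ to close the lift up. Once that is in place, the contraction lives entirely in $X$ and descends automatically through the continuous map $\pi$, so no further estimates on $G$-orbits are needed.
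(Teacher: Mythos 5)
Your proof is correct and follows essentially the same route as the paper: both take a slice $S \subset B_r(x)$ at a lift of the basepoint, use the fact that $\bar{x}$ has the unique preimage $x$ in $S$ (via the slice property $gS \cap S \neq \emptyset \Rightarrow g \in G_x$) to lift a loop in $\pi(S)$ to a loop in $S \subset B_r(x)$, contract it in $X$, and project the null-homotopy down through $\pi$. The only cosmetic difference is that the paper takes a metric ball $B_{r'}(\bar{p}) \subset S/G_p$ as the witnessing neighborhood and reduces arbitrary loops in it to based ones by conjugating with a geodesic, whereas you use $\pi(S)$ itself and consider based loops directly.
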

\begin{proof}
The slice of $G$ actions exists at every point in $X$. Choose any $p \in X$ and $\bar{p}=\pi(p) \in X/G$. There exists $r>0$ so that any loop in $B_r(p)$ is contractible in $X$. By assumption, there is a slice $S$ at $p$. We may assume $S \subset B_r(p)$. Since $\pi(S)=S/G_p$ contains a neighborhood of $\bar{p}$, we can find $r' >0$ so that $B_{r'}(\bar{p}) \subset S/G_p$. For any loop $\gamma \subset B_{r'}(\bar{p})$, we may assume $\gamma(0)=\bar{p}$; otherwise we consider $c\gamma c^{-1}$ where $c$ is a geodesic from $\bar{p}$ to $\gamma(0)$. We can lift $\gamma$ to a loop in $S$, which is contractible in $X$. Project down the homotopy map, $\gamma$ is contractible in $X/G$.  
\end{proof}

\section{Normal subgroup and convergence}
We begin by stating a preliminary lemma concerning the equivariant convergence of normal subgroups.
\begin{lemma}\label{quotient}
Let $(X_i,p_i,G_i)$ be proper geodesic metric spaces with $G_i$ closed subgroup of $\mathrm{Iso}(X_i)$. Suppose that $G_i'$ is a closed normal subgroup of $G_i$. Suppose that
$$(X_i,p_i,G_i,G_i') \overset{eGH}\longrightarrow (X,p,G,G')$$
Then the following commutative diagram holds: 
	\begin{center}
		$\begin{CD}
			(X_i,p_i,G_i,G_i') @>eGH>> (X,p,G,G')\\
			@VV\pi V @VV\pi V\\
			(X_i/G_i',\bar{p}_i,G_i/G_i') @>eGH>> (X/G',\bar{p},G/G').
		\end{CD}$
	\end{center}
\end{lemma}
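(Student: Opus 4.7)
The plan is to derive the lower row of the diagram, together with the vertical projections, as the natural quotient of the upper row, and to verify each ingredient. The first step is to observe that $G'$ is necessarily a closed normal subgroup of $G$, so that $G/G'$ is a well-defined topological (in fact Lie) group. This follows from the eGH convergence of the pair: given $g \in G$ and $g' \in G'$ with bounded displacement, pick $g_i = \psi_i(g) \in G_i$ and $g_i' = \psi_i(g') \in G_i'$ via the approximating triple; by normality of $G_i'$, the conjugates $g_i g_i' g_i^{-1}$ lie in $G_i'$, and they converge (along a subsequence) to $g g' g^{-1}$, which must therefore lie in the closed limit $G'$. The pointed Gromov--Hausdorff convergence $(X_i/G_i', \bar{p}_i) \to (X/G', \bar{p})$ is Theorem~\ref{eGH} applied to the closed subgroup $G_i' \subset \mathrm{Iso}(X_i)$, and commutativity of the square on the space level is a tautology.

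Next I would construct the equivariant approximation of the quotient systems directly from the given eGH triple $(f_i, \phi_i, \psi_i)$ between $(X_i, p_i, G_i)$ and $(X, p, G)$, arranged to simultaneously approximate $G_i'$ by $G'$ (so that $\phi_i(G_i')$ is $\epsilon_i$-close to $G'$). For $\bar{x} \in B_R(\bar{p}_i) \subset X_i/G_i'$, choose a lift $x \in X_i$ with $d(x, p_i)$ close to $d(\bar{x}, \bar{p}_i)$ and set $\bar{f}_i(\bar{x}) := \pi(f_i(x))$. For $[g] \in (G_i/G_i')(\bar{p}_i, R)$, choose a representative $g$ with nearly minimal displacement at $p_i$, and set $\bar{\phi}_i([g]) := [\phi_i(g)]$; define $\bar{\psi}_i$ in the same way. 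By construction the diagram commutes up to the error $\epsilon_i$, so the remaining work is to check that $(\bar{f}_i, \bar{\phi}_i, \bar{\psi}_i)$ is a genuine $o(1)$-eGH approximation.

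The key estimate, and the step I expect to be the main obstacle, is the distance preservation of $\bar{f}_i$: the quotient metrics on the two sides are defined by infima over the respective normal subgroups, and we have to transfer an infimum over $G_i'$ to one over $G'$ through the eGH map $\phi_i$ restricted to $G_i'$. For $\bar{x}, \bar{y} \in B_R(\bar{p}_i)$ with lifts $x, y$, the infimum $d(\bar{x}, \bar{y}) = \inf_{g' \in G_i'} d(x, g'y)$ is essentially attained on the bounded set $G_i'(p_i, C_R)$ for some $C_R = C_R(R)$, so the eGH approximation on balls suffices. On this set the equivariance relation $d(\phi_i(g') f_i(y), f_i(g'y)) < \epsilon_i$ gives
$$d_{X/G'}(\bar{f}_i(\bar{x}), \bar{f}_i(\bar{y})) \le d(f_i(x), \phi_i(g') f_i(y)) \le d(x, g'y) + 2\epsilon_i,$$
and taking the infimum over $g'$ yields one direction; the reverse inequality uses $\psi_i$ identically. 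Equivariance of the triple, and the fact that $\bar{\phi}_i, \bar{\psi}_i$ are approximate group maps, follow by projecting the corresponding properties of $(f_i, \phi_i, \psi_i)$. The technical care required throughout is the bookkeeping: one must ensure at every step that lifts of points and representatives of cosets can be chosen in bounded balls, so that the eGH approximations (which are only defined on fixed-radius balls) actually apply to everything that appears.
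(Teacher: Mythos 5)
Your proposal follows the paper's proof essentially verbatim: you establish normality of $G'$ by taking limits of conjugates from the normal subgroups $G_i'$, build $(\bar{f}_i,\bar{\phi}_i,\bar{\psi}_i)$ by projecting the given triple after choosing lifts and coset representatives of bounded displacement at $p_i$ (the same trick of replacing $h$ by $gh$, $g \in G_i'$, that the paper uses), and control quotient distances by transferring the infimum over $G_i'$ --- witnessed on a bounded set by properness --- through the equivariance estimates, which is exactly the content the paper delegates to the proof of Theorem~\ref{eGH}. The only blemish is your parenthetical claim that $G/G'$ is a Lie group, which is unwarranted in the lemma's generality of proper geodesic metric spaces (and unused), so the argument stands as is.
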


\begin{remark}\label{rem1}
We should mention that $G/G'$ (or $G_i/G_i'$) does not necessarily act effectively on $X/G'$ (or $X_i/G_i'$); that is, some non-trivial elements $g \in G/G'$ may be a trivial action on $X/G'$. For example, consider $X=\mathbb{R}$, $G=\mathrm{Iso}(X)$ which is generated by translations and reflections, and let $G'$ be the group of all translations. Then $X/G'$ is a point while $G/G'=\mathrm{Z}_2$ acts trivially on the point. We shall construct equivariant approximation maps from $(X_i/G_i',\bar{p}_i,G_i/G_i')$ to $(X/G',\bar{p},G/G')$. The non-effectiveness does not matter in this paper; readers may quotient out the subgroup in $G/G'$ (or $G_i/G_i'$) generated by trivial-action elements.
\end{remark}

\begin{proof}
For any $x,y \in X_i$ and $g \in G_i$, let $\pi(x),\pi(y) \in X_i/G_i'$ and $\pi(g) \in G_i/G_i'$ be their quotient images. Then $G_i/G_i'$ acts on $X_i/G_i'$ by $\pi(g)\pi(x)=\pi(gx)$, which is well-defined since $G_i'$ is a normal subgroup of $G_i$. 

Recall that $d(\pi(x),\pi(y)) =  \inf \{ d(x,hy) | h \in G_i' \}$.
We can show that we can actually take the minimum in this expression. Let $r= d(\pi(x),\pi(y))+1$. Then the set 
$\{h \in G_i', d(x,hy) \le r\}$ is non-empty and compact. Thus we can achieve the minimum, $d(\pi(x),\pi(y)) =  \min \{ d(x,hy) | h \in G_i' \}$.

Then we have
$$d(\pi(gx),\pi(gy))= \min \{ d(gx,hgy) | h \in G_i' \}= \min \{ d(gx,gh'y) | h' \in G_i' \}$$
since $G_i'$ is normal. Therefore $d(\pi(x),\pi(y))=d(\pi(gx),\pi(gy))$.
$G_i/G_i'$ acts isometrically on $X_i/G_i'$. 

We next show that $G'$ is normal in $G$. For any $g \in G'$ and $h \in G$, we may choose a large $r$ so that $g \in G'(p,r)$ and $h \in G(p,r)$. Recall that 
$$ G'(p,r)= \{g \in G' | d(gp, p) \le r \}, G(p,r)= \{ g \in G | d(gp,p) \le r \}.$$ 
Then we may choose $g_i \in G'_i(p_i,r)$ and $h_i \in G_i(p_i,r)$ converging to $g$ and $h$ respectively. Since $G_i'$ is normal, $h_i^{-1}g_ih_i \in G_i'(p_i,3r)$. Then the limit $h^{-1}gh \in G'(p_i,3r)$. Thus $G'$ is normal. By the same proof in the last paragraph, $G/G'$ acts isometrically on $X/G'$.

We finally show that $G_i/G_i'$ equivariantly converges to $G/G'$. Assume that $(f,\phi,\psi)$ is an $\epsilon_i$-eGH approximation from $(M_i,p_i,G_i,G_i')$ to $(X,p,G,G')$. We shall construct a $10\epsilon_i$-eGH approximation $(\bar{f},\bar{\phi},\bar{\psi})$ from $(M_i/G_i',\bar{p}_i,G_i/G_i')$ to $(X/G',\bar{p},G/G')$. 

For any $\bar{x} \in B_{\frac{1}{\epsilon_i}}(\bar{p}_i)$, we can find a preimage point $x \in B_{\frac{1}{\epsilon_i}}(p_i)$. Define $\bar{f}(\bar{x})=\pi(f(x)) \in X/G'$. By the proof of Theorem \ref{eGH}, $\bar{f}$ is a GH approximation map.  

Now we construct the aprroxiamtion map $\bar{\phi}:G_i/G_i'(\bar{p}_i,\frac{1}{\epsilon_i}) \to G/G'(\bar{p},\frac{1}{\epsilon_i})$; the construction of $\bar{\psi}$ is same. For any $\bar{h} \in G_i/G_i'(\bar{p}_i,\frac{1}{\epsilon_i})$, we can find a preimage action $h \in G_i$ of $\bar{h}$. Since $d(\bar{p}_i,\bar{h}\bar{p}_i) \le \frac{1}{\epsilon_i}$, we may find $g \in G_i'$ so that $d(p_i,ghp_i) \le \frac{1}{\epsilon_i}$.
Since $h^{-1}gh \in G_i'$, $gh$ is also a preimage of $\bar{h}$ in $G_i$. Therefore we may assume $ d(p_i,hp_i) \le \frac{1}{\epsilon_i}$, otherwise we choose $gh$.
Define $\bar{\phi}(\bar{h})=\pi(\phi(h)) \in G/G'(\bar{p},\frac{1}{\epsilon_i})$. 

Now we verify that $\bar{\phi}$ is an eGH approximation, that is, for any $\bar{x} \in B_{\frac{1}{\epsilon_i}}(\bar{p}_i)$ and $\bar{h} \in G_i/G_i'(\bar{p}_i,\frac{1}{\epsilon_i})$, $$d(\bar{\phi}(\bar{h})\bar{f}(\bar{x}),\bar{f}(\bar{h}\bar{x})) \le \epsilon_i.$$ 
Actually by our definition, choose preimages $h \in G_i(p_i,\frac{1}{\epsilon_i})$ and $x \in B_{\frac{1}{\epsilon_i}}(p_i)$ of $\bar{h}$ and $\bar{x}$,  
$$\bar{\phi}(\bar{h})\bar{f}(\bar{x})=\pi(\phi(h)f(x)), \bar{f}(\bar{h}\bar{x})=\pi(f(hx)).$$
Since $d(\phi(h)f(x),f(hx)) \le \epsilon_i$ by the definition of $\phi$, $d(\pi(\phi(h)f(x)),\pi(f(hx))) \le \epsilon_i$. Thus $\bar{\phi}$ is an $\epsilon_i$-eGH approximation.
\end{proof}
\begin{remark}\label{rem2}
The above proof also shows 
$$G_i/G_i' (\bar{p}_i,r) =\langle G_i(p_i,r),G_i' \rangle /G_i'.$$
The image $\pi(G_i(p_i,r))$ is exactly $G_i/G_i' (\bar{p}_i,r)$. In particular, let $r=0$, the quotient image of the isotropy group $\pi((G_i)_{p_i})$ is exactly the isotropy group $(G_i/G_i')_{\bar{p}_i}$.
\end{remark}

The main goal of this section is to prove the following theorem, which explains how the global property that $X_i$ is simply connected is related to the equivaraint Gromov-Hausdorff convergence.
\begin{theorem}\label{Ide}
Assume that $(X_i,p_i,G_i)$ are simply connected proper geodesic metric spaces where $G_i$ is a closed subgroup of $\mathrm{Iso}(X_i)$. Suppose that
$$(X_i,p_i,G_i) \overset{eGH}\longrightarrow (X,p,G)$$ and there is a closed normal subgroup $G' \vartriangleleft G$ satisfying: $G/G'$ is discrete; $G'$ is generated by $G'(p,R)$; the diameter of $X_i/G_i$ is less than $D$. 

Then there exists a sequence of normal subgroup $G_i'$ of $G_i$ so that $G_i'$ converges to $G'$; $G_i/G_i' \cong G/G'$ for large $i$; $G_i'$ is generated by $G_i'(p_i,R+ \epsilon_i)$, $\epsilon_i \to 0$; $G/G'$ is finitely presented.   
\end{theorem}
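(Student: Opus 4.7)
The plan is to define $G_i'$ as the set of all $G_i$-elements that are eGH-close to some element of $G'$, and then verify the four claims in turn. Since $G/G'$ is discrete and $G(p,r)$ is compact for every $r$, the image of $G(p,r)$ in $G/G'$ is a finite set, so the distinct $G'$-cosets meeting $G(p,r+1)$ are pairwise disjoint compact sets, separated by some $\delta_r > 0$. For $i$ large this separation survives the eGH approximation, and every element of $G_i(p_i, r)$ is unambiguously close to exactly one such coset; I set
\[
G_i' \;=\; \{\, g \in G_i \;:\; g \text{ approximates some element of } G' \,\}.
\]
A direct check shows $G_i'$ is a subgroup (identity, inverses, and products all preserve the property of approximating $G'$) and $G_i' \vartriangleleft G_i$: since $G' \vartriangleleft G$, the conjugate of an approximator of $g^*\in G'$ by an approximator of $h^* \in G$ approximates $h^*g^*(h^*)^{-1} \in G'$. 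Equivariant convergence $G_i' \to G'$ is immediate—forward by approximating elements of $G'$ by $G_i$-elements (which then lie in $G_i'$ by construction), and backward by closedness of $G'$.

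For (b), I define the coset-reading map $F_i : G_i \to G/G'$ sending each $g$ to the unique coset it approximates. It is well-defined by the separation above, a homomorphism by compatibility of the group laws with eGH approximation, surjective by lifting coset representatives to nearby elements of $G_i$, and has kernel $G_i'$ by construction, giving $G_i/G_i' \cong G/G'$. For (c), I decompose any $g \in G_i'$: write its eGH limit $g^* \in G'$ as a product $g_1^* \cdots g_n^*$ with $g_j^* \in G'(p,R)$, approximate each $g_j^*$ by a short element $g_{j,i} \in G_i(p_i, R+\epsilon_i)\cap G_i'$, and absorb the residual $g^{-1}(g_{1,i}\cdots g_{n,i})$ which itself approximates $e \in G'(p,R)$ and hence is a short element of $G_i'$.

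The main obstacle I expect is the finite presentability of $G/G'$. Here I would exploit that each $X_i$ is simply connected and $\mathrm{diam}(X_i/G_i) \le D$ via a \v{S}varc–Milnor / determining-pseudogroup argument in the spirit of Lemma~\ref{determining}: $G_i$ admits a presentation with generators drawn from $G_i(p_i, 2D+1)$ and relations of bounded word length coming from small null-homotopies of triangular loops in $X_i$. Pushing this presentation down to the discrete quotient $G_i/G_i' \cong G/G'$, only finitely many generator classes and finitely many relation types survive, yielding a finite presentation of $G/G'$. The technical subtlety is that $G_i$ need not be discrete, which I would handle by first factoring out the identity component $(G_i)_0 \subseteq G_i'$ (it acts trivially on $G/G'$) and then carrying out the triangle-filling argument on the resulting discrete action on a space still controlled by the original eGH convergence.
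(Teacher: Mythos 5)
There is a genuine gap, and it sits at the central step: your coset-reading map $F_i\colon G_i \to G/G'$ is not well-defined as a global homomorphism by ``compatibility of the group laws with eGH approximation.'' The eGH data only controls $G_i(p_i,1/\epsilon_i)$, so for a fixed $i$ an element $g\in G_i$ far from $p_i$ approximates nothing at all, and your separation constant $\delta_r$ degenerates as $r\to\infty$, so you cannot define membership in $G_i'$ (or a coset for $F_i$) for unbounded elements directly. The only way to extend the locally defined, almost-multiplicative coset assignment from a bounded piece such as $G_i(p_i,20D)$ to a genuine homomorphism on all of $G_i$ is to write $g$ as a word in short generators and check independence of the chosen word --- and that independence is precisely the statement that all relations of $G_i$ are generated at scale $20D$, i.e.\ that $G_i(p_i,20D)$ is a \emph{determining} set. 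This is where the simple connectedness of $X_i$ and the diameter bound must enter (Lemma \ref{determining}, proved via the covering-space argument $\hat H \times_{H(q,20D)} \bar B_{10D}(q)\to X_i$); your proposal invokes simple connectedness only at the very end for finite presentability, which signals that the load-bearing use has been missed. Relatedly, your claim that ``backward'' convergence $G_i'\to G'$ is immediate from closedness of $G'$ is not sound: once $G_i'$ is repaired to be the group \emph{generated} by the short approximators of $G'$ (as it must be, for the reasons above), its limit $G''$ a priori only \emph{contains} $G'$ --- a product of elements each close to $G'$ can be short yet close to an element of $G\setminus G'$ --- and ruling out $G''\supsetneq G'$ is a substantial step in the paper (Lemmas \ref{pseudo iso} and \ref{isom}, followed by the argument that the composite $G_i/G_i'\to G/G'\to G/G''$ being an isomorphism forces $G'=G''$), again running through the determining-set machinery on both the $G_i$ and the limit side.

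The parts of your proposal that do work are essentially the paper's peripheral steps: the discreteness of $G/G'$ giving a definite gap between $G'$ and the other cosets at a fixed scale (the paper's $\epsilon$ for $S_i(20D)$), the normality of $G_i'$ via conjugating short approximators (though for arbitrary conjugators $h\in G_i$ you need the generation trick plus a limiting contradiction argument, as conjugation by far-away elements is not controlled by the approximation), the short-generation statement (c) by splitting the limit $g^*\in G'$ into $G'(p,R)$-factors and absorbing a residual near the identity, and the finite presentability via pushing a bounded presentation to the quotient (this is the paper's Lemma \ref{det} applied to the finite determining set $G/G'(\bar p,20D)$). But as written the proposal assumes the conclusion of the hardest step: to make $F_i$ well-defined, multiplicative, and to identify $\ker F_i$ with $G_i'$ and the limit of $G_i'$ with $G'$, you need the groupfication/determining-set argument, and with it the hypothesis that the $X_i$ are simply connected, used exactly there.
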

Note that the above theorem was proved by Fukaya-Yamaguchi under an additional condition that $G_i$ is discrete and free \cite{FukayaYamaguchi1994}. Our proof builds upon ideas from \cite{FukayaYamaguchi1992,FukayaYamaguchi1994,PanWang2021,SantosZamora}.

Before proceeding with Theorem \ref{Ide}, we introduce the concept of the groupfication of a pseudo-group. Let $S$ be a symmetric subset of a group $H$ and $S$ contains the identity. We define $S$ to be a pseudo-group. The groupfication of $S$, denoted $\hat{S}$,  is constructed as follows. 

Let $F_{S}$ be the free group generated by $e_g$ for all $g \in S$. We may quotient $F_{S}$ by the normal subgroup generated by all elements of the form $e_{g_1}e_{g_2}e_{g_1g_2}^{-1}$, where $g_1,g_2 \in S$ with $g_1g_2 \in S$. The resulting quotient group is $\hat{S}$. 

For any group $G$ and a map $\phi:S \to G$, we say that $\phi$ a homomorphism if $\phi(g_1)\phi(g_2)=\phi(g_1g_2)$ for all $g_1,g_2 \in S$ with $g_1g_2 \in S$.
Then there is a natural homomorphism
$i: S \to \hat{S}$ by $i(g)=[e_g]$ where $[e_g]$ is the quotient image of $e_g$. Any homomorphism $\phi:S \to G$ can be extended to $\hat{\phi}:\hat{S} \to G$ so that $\phi=\hat{\phi} \circ i$. 

Since $S$ is a subset of group $H$, there is a natural homomorphism $\pi:\hat{S} \to H$ by $\pi([e_{g_1}e_{g_2}...e_{g_k}])=g_1g_2...g_k$. $\pi \circ i$ is the identity map on $S$, thus $i$ is injective. If $S$ generates $H$, then $\pi$ is surjective.   

\begin{defn}
For any group $H$ and a symmetric subset $S \subset H$ which generates $H$, we call $S$ determining if $\pi: \hat{S} \to H$ is an isomorphism. 
\end{defn}

The following lemma was proved by Santos and Zamora in \cite{SantosZamora,Zamora2020} concerning the case that $H$ is discrete. The results extends to the general case that $H$ is closed as demonstrated in \cite{PanWang2021}.
\begin{lemma}\label{determining}
Let $D>0$ and $(Y,q)$ be a simply connected pointed proper geodesic metric space. Assume that $H$ is a closed subgroup of $\mathrm{Iso}(Y)$ so that $\mathrm{diam}(Y/H) \le D$. Then $H(q,20D)=\{g \in H ,d(gq,q) \le 20D \}$ is a determining subset of $H$.
\end{lemma}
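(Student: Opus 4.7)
My plan is to prove the two required properties of $S := H(q,20D)$ separately: (i) $S$ generates $H$, and (ii) every relation in $H$ among elements of $S$ follows from the ``triangle'' relations $e_{g_1}e_{g_2}e_{g_1g_2}^{-1}$ built into $\hat{S}$. The first is a standard chain argument, the second (which is the substantive part) uses the simple connectedness of $Y$ to discretize a null-homotopy of a loop into ``triangle'' moves.

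For generation: given $g\in H$, pick a minimizing geodesic from $q$ to $gq$ and subdivide it into points $y_0=q,y_1,\dots,y_N=gq$ with $d(y_j,y_{j+1})\le D$. Since $\operatorname{diam}(Y/H)\le D$, pick $h_j\in H$ with $d(h_j q,y_j)\le D$, taking $h_0=e$ and $h_N=g$. Then $d(h_j q,h_{j+1}q)\le 3D$, so $s_j:=h_j^{-1}h_{j+1}\in H(q,3D)\subset S$, and $g=s_0s_1\cdots s_{N-1}$.

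For the determining property, suppose $e_{g_1}\cdots e_{g_k}\in\hat{S}$ with $g_1\cdots g_k=e$ in $H$. Form the associated loop $\gamma$ at $q$ by concatenating geodesic segments $q\to g_1q\to g_1g_2q\to\cdots\to g_1\cdots g_kq=q$; each segment has length $\le 20D$. Since $Y$ is simply connected, $\gamma$ admits a continuous null-homotopy $H\colon[0,1]^2\to Y$ with $H(\cdot,0)=\gamma$ and $H(\cdot,1)\equiv q$. I then choose a grid on $[0,1]^2$ fine enough that the diameter of $H$ applied to each small square is less than $D$, and at every grid vertex $v$ select $h_v\in H$ with $d(h_v q, H(v))\le D$, arranging that the vertices along the bottom edge yield exactly the partial products $e,g_1,g_1g_2,\dots,g_1\cdots g_k=e$ and that all vertices on the top edge and the vertical sides are assigned the identity. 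For any pair of adjacent grid vertices $v,v'$, the displacement $h_v^{-1}h_{v'}$ satisfies $d(q, h_v^{-1}h_{v'}q)\le 2D + \operatorname{diam}(H(\text{square}))<3D$, so lies in $S$. Reading the boundary of the rectangle off gives the original word on the bottom and the trivial word on the top and sides; passing from the bottom edge to the top edge can be realized by crossing the small grid squares one at a time, and crossing a single square replaces two consecutive edge-labels by the other two, which amounts to applying triangle relations of the form $e_{a}e_{b}e_{ab}^{-1}=1$ (twice, along the two diagonals of the square, with intermediate label controlled because the three or four vertices involved each displace $q$ by at most $5D\ll 20D$, well inside $S$). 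Iterating across all squares reduces the original word to the empty word in $\hat{S}$, which is exactly the statement that $\pi\colon\hat{S}\to H$ is injective.

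The principal obstacle is the discretization step: I must choose the grid so that \emph{all} edge, diagonal, and triangle displacements that arise in expressing a single square-crossing as a composition of triangle relations remain within $H(q,20D)$, and so that the bookkeeping at the corners of the rectangle matches the given word. The generous choice of the radius $20D$ (rather than, say, $3D$) is precisely what gives the slack needed to accommodate the mesh size, the $D$-approximation of each grid image by an orbit point, and the intermediate vertices introduced when one subdivides a quadrilateral into triangles. With this slack the combinatorial argument goes through verbatim as in \cite{SantosZamora} and \cite{PanWang2021}; the only adaptation from the discrete case treated by Santos--Zamora is that the selections $h_v$ need not be canonical, which is harmless because all that is used is membership in $S$.
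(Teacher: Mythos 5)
Your proof is essentially correct, but it takes a genuinely different route from the paper. The paper does not discretize a null-homotopy; instead it runs a covering-space argument: it topologizes the groupification $\hat{H}$ of $H(q,20D)$, forms the space $\hat{H} \times_{H(q,20D)} \bar{B}_{10D}(q)$, shows this space is connected, and shows the natural map $\Psi([\hat{g},x])=\pi(\hat{g})x$ to $Y$ is a covering map (importing the local-homeomorphism property from \cite{PanWang2021,Zamora2020}). Simple connectedness of $Y$ then forces $\Psi$ to be a homeomorphism, and triviality of $\ker\pi$ falls out by examining fibers. Your argument instead lifts a word with trivial product to a piecewise-geodesic loop, takes a null-homotopy, and reduces it by hand to triangle relations across a fine grid --- in effect you carry out concretely the homotopy lifting that the covering-space construction packages abstractly. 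Your version is more elementary and self-contained (it needs no topology on $\hat{H}$ and no appeal to the local-homeomorphism lemma), at the cost of heavier combinatorial bookkeeping; the paper's version is shorter given the cited machinery and yields the stronger geometric statement that $\hat{H}\times_{H(q,20D)}\bar{B}_{10D}(q)\cong Y$.

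Two points in your write-up deserve correction, though neither is fatal. First, the ``slack'' at radius $20D$ is not generous but tight exactly where you need it most: when you refine a single letter $g_j$ with $d(q,g_jq)=20D$ into small steps, the naive partial products $w_{j-1}^{-1}h_{v_i}$ satisfy only $d(q,w_{j-1}^{-1}h_{v_i}q)\le 20D+D=21D$, which overflows $S$; to keep the left-to-right folding $e_{s_1}e_{s_2}=e_{s_1s_2}$, etc., inside $S$ you must anchor the interior subdivision points at distance at most $19D$ along the geodesic (e.g.\ $20$ equal steps of length $\le D$), so that every intermediate product displaces $q$ by at most $19D+D=20D$, with the closed inequality in the definition of $H(q,20D)$ saving the boundary case. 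Second, your claim that ``the three or four vertices involved each displace $q$ by at most $5D$'' is wrong as stated: the absolute displacements $d(h_vq,q)$ grow with the length of the word; what is bounded (by about $3D$ for edges and $6D$ for diagonals/compositions) are the relative displacements $h_v^{-1}h_{v'}$, and it is these that must lie in $S$ for the triangle moves. With those repairs your discretization argument goes through.
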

We sketch the proof for readers' convenience.
\begin{proof}
$H(q,20D)$ generates $H$ since $\mathrm{diam}(Y/H) \le D$. We show that $H(q,20D)$ is determining. Let $\hat{H}$ be the groupfication of $H(q,20D)$.

$H$ has compact-open topology from which we can construct topological structure on $\hat{H}$. Let $K=\{g \in H|d(gq,q)< 5D\}$ 
and $\{ U_{\lambda} \}_{\lambda\in \Lambda}$ be the set of all open sets in $K$. We use left translations on $\hat{H}$ and the map $i: K \to \hat{H}$  to define a topological base on $\hat{H}$.  More precisely,
$$\{\hat{g} i(U_{\lambda}) | \hat{g} \in \hat{H},\lambda\in \Lambda \}$$ generates a topology base on $\hat{H}$. Since the topology of $H$ is generated by the topology of $K$ and left $H$-actions, $i:K \to i(K)$ is embedding.

Let $\bar{B}_{10D}(q)$ be the closed ball at $q$ with radius $10D$. We define 
\begin{equation*}
	\hat{H} \times_{H(q,20D)} \bar{B}_{10D}(q)= \hat{H} \times \bar{B}_{10D}(q) / \sim,
\end{equation*} 
where the equivalence relation is given by $(\hat{g}i(g),x) \sim (\hat{g},gx)$ for all $g \in H(q,20D), \hat{g} \in \hat{H}, x \in \bar{B}_{10D}(q)$ with $gx \in \bar{B}_{10D}(q)$. We endow $\hat{H} \times_{H(q,20D)} \bar{B}_{10D}(q)$ with the quotient topology. We use the notation $[\hat{g}, x]$ for elements in $\hat{H} \times_{H(q,20D)} \bar{B}_{10D}(q)$.

We first show that $\hat{H} \times_{H(q,20D)} \bar{B}_{10D}(q)$ is connected. $\bar{B}_{10D}(q)$ is path connected thus connected. Therefore for any $g \in \hat{H}$, $[\hat{g},\bar{B}_{10D}(q)]$ is contained in a connected component of  $\hat{H} \times_{H(q,20D)} \bar{B}_{10D}(q)$ for any $\hat{g} \in \hat{H}$.  
Notice that for any $g \in H$, $g \bar{B}_{10D}(q) \cap \bar{B}_{10D}(q) \neq \emptyset$ if and only if $g \in H(q,20D)$. In particular, for any $\hat{g} \in \hat{H}$ and $g \in H(q,20D)$,  $[\hat{g},\bar{B}_{10D}(q)]$ and $[\hat{g}i(g),\bar{B}_{10D}(q)]$ are in the same connected component. Since $i(H(q,20D))$ generates $\hat{H}$, $\hat{H} \times_{H(20D)} \bar{B}_{10D}(q)$ is connected.

Define $\Psi:\hat{H} \times_{H(q,20D)} \bar{B}_{10D}(q) \to Y$ by $\Psi([\hat{g},x])=\pi(\hat{g})x$.
Then $\Psi$ is well-defined and surjective, since $\mathrm{diam}(Y/H) \le D$ and $\pi: \hat{H} \to H$ is surjective. By the argument in \cite{PanWang2021,Zamora2020}, $\Psi$ is locally homeomorphic and $\hat{H} \times_{H(q,20D)} \bar{B}_{10D}(q)$ is a covering space of $Y$. Since $Y$ is simply connected and $\hat{H} \times_{H(q,20D)} \bar{B}_{10D}(q)$ is connected, $\Psi$ is a homeomorphism map. Then for any $\hat{g} \in \ker (\pi)$ and $x \in \bar{B}_{10D}(q)$, $[\hat{g},x]=[e,x]= \Psi^{-1}(x)$ where $e$ is the identity in $S$. Then $\hat{g}=e \in \hat{S}$ by lemma 5.2 in \cite{PanWang2021}. Thus $\ker (\pi)$ is trivial and $H(q,20D)$ is determining.
\end{proof}

\begin{lemma}\label{det}
Assume that $S$ is a determining set of group $H$ and $H'$ is a normal subgroup $H$. If $S \cap H'$ generates $H'$, $\pi(S)$ is a determining set of $H/H'$ where $\pi: H \to H/H'$ is the projection map.  
\end{lemma}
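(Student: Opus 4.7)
The plan is to build a natural homomorphism $\hat{\pi} : \widehat{S} \to \widehat{\pi(S)}$ defined on generators by $e_g \mapsto e_{\pi(g)}$, and then use the determining property of $S$ to force injectivity of the canonical projection $\rho : \widehat{\pi(S)} \to H/H'$.

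First I would verify that $\hat{\pi}$ is well-defined: whenever $g_1, g_2, g_1 g_2 \in S$, one has $\pi(g_1), \pi(g_2), \pi(g_1)\pi(g_2) = \pi(g_1 g_2) \in \pi(S)$, so the defining relation $e_{g_1} e_{g_2} e_{g_1 g_2}^{-1}$ of $\widehat{S}$ is sent to the corresponding relation in $\widehat{\pi(S)}$. Surjectivity of $\rho$ is automatic since $S$ generates $H$ forces $\pi(S)$ to generate $H/H'$; the content of the lemma is the injectivity of $\rho$.

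For injectivity, I would fix $\hat{h} \in \ker \rho$ and write $\hat{h} = e_{t_1}^{\varepsilon_1} \cdots e_{t_k}^{\varepsilon_k}$ with $t_i \in \pi(S)$ and $\varepsilon_i = \pm 1$. Choosing lifts $g_i \in S$ with $\pi(g_i) = t_i$ produces $\hat{g}' := e_{g_1}^{\varepsilon_1} \cdots e_{g_k}^{\varepsilon_k} \in \widehat{S}$ with $\hat{\pi}(\hat{g}') = \hat{h}$, and its image $g := g_1^{\varepsilon_1} \cdots g_k^{\varepsilon_k}$ in $H$ lies in $H'$ because $\rho(\hat{h}) = e$. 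Now the hypothesis that $S \cap H'$ generates $H'$ enters: I rewrite $g = s_1^{\delta_1} \cdots s_m^{\delta_m}$ with $s_j \in S \cap H'$, producing a second element $\hat{g} \in \widehat{S}$ whose image in $H$ equals $g$. The determining property of $S$ then forces $\hat{g} = \hat{g}'$ in $\widehat{S}$, so applying $\hat{\pi}$ gives $\hat{h} = \hat{\pi}(\hat{g}') = \hat{\pi}(\hat{g}) = e_{\pi(s_1)}^{\delta_1} \cdots e_{\pi(s_m)}^{\delta_m}$. Each $\pi(s_j)$ is the identity of $H/H'$, and the relation coming from taking $g_1 = g_2 = e$ in the definition of groupification forces $e_e = 1$ in $\widehat{\pi(S)}$; hence $\hat{h} = 1$.

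The main obstacle I anticipate is the bookkeeping around the identity generator: one must explicitly check that $e_e$ is trivial in $\widehat{\pi(S)}$ and invoke the hypothesis on $S \cap H'$ at precisely the point where every factor of $\hat{\pi}(\hat{g})$ needs to collapse individually. Without "$S \cap H'$ generates $H'$", the expression $\hat{\pi}(\hat{g})$ could contain generators $e_{\pi(s_j)}$ with $\pi(s_j) \ne e$, and the collapse step would fail even though $\hat{g} = \hat{g}'$ still holds in $\widehat{S}$.
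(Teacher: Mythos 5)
Your proof is correct and takes essentially the same route as the paper's: lift a word in $\pi(S)$ representing the identity of $H/H'$ to a word in $S$, use the hypothesis that $S \cap H'$ generates $H'$ to compare it with (in the paper, to append) a word in $S \cap H'$, invoke the determining property of $S$, and project back down, collapsing the generators $e_{\pi(s_j)} = e_e$. Your formulation via the induced homomorphism $\hat{\pi}:\widehat{S} \to \widehat{\pi(S)}$ and injectivity of $\widehat{S} \to H$ is just a cleaner, functorial restatement of the paper's computation in the relator subgroup of the free group $F_{\pi(S)}$, with the $e_e = 1$ bookkeeping (which the paper leaves implicit) made explicit.
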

\begin{proof}
It's obvious that $\pi(S)$ is symmetric and generates $H/H'$. To prove that the groupfication of $\pi(S)$ is isomorphic to $H/H'$, we only need to prove that for any $g_1,g_2,..,g_k \in \pi(S)$ so that $g_1g_2...g_k=e$ in $H/H_i'$, then in the free group $F_{\pi(S)}$, $e_{g_1}e_{g_2}...e_{g_k}$ is contained in the normal subgroup generated by all elements of the form $e_{h_1}e_{h_2}e_{h_1h_2}^{-1}$, where $h_1,h_2 \in \pi(S)$ with $h_1h_2 \in \pi(S)$. 

For each $1 \le j \le k$, we can find $\tilde{g}_j \in S$ so that $\pi(\tilde{g}_j)=g_j$. Then $\tilde{g}_1\tilde{g}_2...\tilde{g}_k \in H'$. Since $S \cap H'$ generates $H'$, we may find $\tilde{g}_{k+1},...\tilde{g}_{k+k'} \in S \cap H'$ so that 
$\tilde{g}_1\tilde{g}_2...\tilde{g}_k\tilde{g}_{k+1},...\tilde{g}_{k+k'} = e \in H'$.

Since $S$ is a determining set, in the free group $F_{S}$, $e_{\tilde{g}_1}e_{\tilde{g}_2}...e_{\tilde{g}_{k+k'}}$ is contained in the normal subgroup generated by elements of the form $e_{\tilde{h}_1}e_{\tilde{h}_2}e_{\tilde{h}_1\tilde{h}_2}^{-1}$, where $\tilde{h}_1,\tilde{h}_2 \in S$ with $\tilde{h}_1\tilde{h}_2 \in S$. Notice that $\pi(\tilde{g}_{k+j})$ is the identity for each $1 \le j \le k'$. Project down to $ \pi(S)$, then $e_{g_1}e_{g_2}...e_{g_k}$ is contained in the normal subgroup generated by all elements of the form $e_{h_1}e_{h_2}e_{h_1h_2}^{-1}$, where $h_1,h_2 \in \pi(S)$ with $h_1h_2 \in \pi(S)$.
Thus $\pi(S)$ is a determining set in $H/H'$.
\end{proof}

Consider the converging sequence $(X_i,p_i,G_i) \overset{eGH}\longrightarrow (X,p,G)$ in the setting of Theorem \ref{Ide}. We may assume $D \ge R$. Let $S_i(20D)$ be the subset of elements of $G_i(p_i,20D)$ which are $\epsilon_i$ close to an element $G'$. Notice that $G/G'$ is discrete, thus there is $\epsilon > 0$ so that for any $g \in G(p,20D)$ and $h \in G'(p,20D)$, if $g$ is $\epsilon$ close to $h$, then $g \in G'(p,20D)$. Here $\epsilon$ closeness means $d(gx,hx) < \epsilon$ for all $x \in B_{1/\epsilon}(p)$. 

Then for sufficiently large $i$ and any $g_i \in G_i(p_i,20D)$ and $h_i \in S_i(20D)$, if $g_i$ is $\epsilon/2$ close to $h_i$, then $g_i \in S_i(20D)$ as well. In particular, any element in $G_i$ which is $\epsilon/2$ close to the identity, is contained in $S_i(20D)$.

Therefore $S_i(20D)$ is a closed subset of $G_i(p_i,20D)$ when $i$ is large enough, and thus $G_i' = \langle S_i(20D) \rangle$ is a closed subgroup of $G_i$.
\begin{lemma}
For sufficiently large $i$, $G_i'$ is a normal subgroup in $G_i$.
\end{lemma}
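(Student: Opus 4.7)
The goal is to verify that for every sufficiently large $i$ and every $g\in G_i$ one has $g\,G_i'\,g^{-1}\subset G_i'$. Because $G_i'=\langle S_i(20D)\rangle$ and because (by Lemma~\ref{determining}, applied to the simply connected space $X_i$ with the bound $\mathrm{diam}(X_i/G_i)\le D$) the set $G_i(p_i,20D)$ is a determining set, in particular a generating set of $G_i$, it suffices to prove the following: \emph{for every $g_i\in G_i(p_i,20D)$ and every $h_i\in S_i(20D)$, the element $g_i h_i g_i^{-1}$ lies in $G_i'$.} I argue by contradiction: suppose along a subsequence this fails.

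Pass to a further subsequence so that $g_i\to\tilde g\in G(p,20D)$ and $h_i\to\tilde h\in G(p,20D)$; because $h_i\in S_i(20D)$ is $\epsilon_i$-close to $G'$ and $G'$ is closed, the limit $\tilde h$ actually lies in $G'$. Normality of $G'$ in $G$ then gives $\tilde g\tilde h\tilde g^{-1}\in G'$. Since $G'$ is generated by $G'(p,R)\subset G'(p,20D)$ (recall $R\le D$), we may factor
\[
\tilde g\tilde h\tilde g^{-1}=k_1 k_2\cdots k_m,\qquad k_j\in G'(p,20D).
\]
Via the equivariant Gromov--Hausdorff approximations, for each $j$ choose $k_{j,i}\in G_i(p_i,20D+\epsilon_i)$ with $k_{j,i}\to k_j$. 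Because $k_{j,i}$ is forced to be $\epsilon_i$-close to the element $k_j\in G'$, one has $k_{j,i}\in S_i(20D)$ for large $i$ (possibly after absorbing the $\epsilon_i$ slack into the $20D$ window, as done in the excerpt). Set $\ell_i:=k_{1,i}\cdots k_{m,i}\in G_i'$.

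Now compare $g_i h_i g_i^{-1}$ and $\ell_i$. All the elements $g_i,h_i,k_{j,i}$ have displacement at $p_i$ bounded by a constant depending only on $D$ and $m$, so their compositions converge on arbitrarily large balls around $p_i$ to the corresponding compositions of $\tilde g,\tilde h,k_j$ in $G$. In particular $\ell_i^{-1}\,g_i h_i g_i^{-1}\longrightarrow (k_1\cdots k_m)^{-1}\tilde g\tilde h\tilde g^{-1}=e$ in the compact-open topology. For $i$ large this makes $\ell_i^{-1}g_i h_i g_i^{-1}$ closer than $\epsilon/2$ to the identity, and by the observation preceding this lemma every such element already belongs to $S_i(20D)\subset G_i'$. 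Therefore
\[
g_i h_i g_i^{-1}=\ell_i\cdot\bigl(\ell_i^{-1}g_i h_i g_i^{-1}\bigr)\in G_i'\cdot G_i'=G_i',
\]
contradicting the assumption.

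The only real obstacle is the final step, namely showing that the product $\ell_i^{-1}\,g_i h_i g_i^{-1}$ is genuinely close to the identity in the compact-open sense on a ball large enough to trigger the $\epsilon/2$ criterion. This is a standard triangle-inequality bookkeeping argument using the eGH approximation maps $(f,\phi,\psi)$ for $(X_i,p_i,G_i)\to(X,p,G)$, but it has to be done with some care because group multiplication is only approximately preserved, and the number of factors $m$ (determined in the limit) is fixed while the approximation error is uniform once $i$ is taken large enough relative to $m$.
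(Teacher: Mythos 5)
Your proof is correct and follows essentially the same route as the paper's: reduce to conjugating generators $h_i\in S_i(20D)$ by $g_i\in G_i(p_i,20D)$, argue by contradiction, pass to limits $\tilde g,\tilde h$, use normality of $G'$ to factor $\tilde g\tilde h\tilde g^{-1}$ into finitely many elements of $G'(p,D)\subset G'(p,20D)$, lift those factors into $S_i(20D)$, and absorb the leftover element (close to the identity, hence in $S_i(20D)$ by the observation preceding the lemma) into $G_i'$. The product-convergence bookkeeping you flag at the end is likewise left implicit in the paper and is harmless, since the number of factors $m$ is fixed in the limit while the approximation errors tend to zero.
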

\begin{proof}
Since $G_i(p_i,20D)$ generates $G_i$, it suffices to prove $g G_i' g^{-1}=G_i'$ for any $g \in G_i(p_i,20D)$. By our definition of $G_i'$, we only need show $gS_i(20D)g^{-1} \subset G_i'$ for any  $g \in G_i(p_i,20D)$.

Suppose, by contradiction, that there exists a sequence $g_i \in  G_i(p_i,20D)$ and $h_i \in S_i(20D)$ so that $g_ih_ig_i^{-1} \notin G_i'$. Passing to a subsequence if necessary, $g_i \to g \in G(p,20D)$, $h_i \to h \in G'(p,20D)$ and $g_ih_ig_i^{-1} \to ghg^{-1} \in G'(p,60D)$ since $G'$ is normal. Since $G'(p,R)$ generates $G'$ and $R \le D$, we can find $t_1,t_2,...,t_k \in G'(p,D)$ so that 
$$ghg^{-1}=t_1t_2...t_k.$$
We may find $t_{ij} \in S_i(20D)$ converging to $t_j \in G'(p,D)$ for each $1 \le j \le k$, in particular,
$$g_ih_ig_i^{-1}=t_{i1}t_{i2}...t_{ik}s_i,$$
where $s_i \in G_i$ is close to the identity. Thus $s_i \in S_i(20D)$ as well and $g_ih_ig_i^{-1} \in G_i'$, a contradiction.  
\end{proof}

By Lemma \ref{quotient}, passing to a subsequence if necessary, 
\begin{center}
		$\begin{CD}
			(X_i,p_i,G_i,G_i') @>eGH>> (X,p,G,G'')\\
			@VV\pi V @VV\pi V\\
			(X_i/G_i',G_i/G_i',\bar{p}_i) @>eGH>> (X/G'',\bar{p},G/G'').
		\end{CD}$
	\end{center}

Recall that $G_i'=\langle S_i(20D) \rangle$ and define $G_i' (p_i,20D) = \{ g \in G_i', d(gp_i,p_i) \le 20D\}$. 
Although $S_i(20D) \subset G_i'(p_i,20D)$, it is not immediately clear that they are identical.
Then the group $G''$ contains $G'(p,20D)$, thus $G''$ contains $G'$ since $G'(p,R)$ generates $G'$ and $R \le D$. In particular, $G/G''$ is also discrete. We shall show $G'=G''$, which will be equivalent to $S_i(20D)=G_i'(p_i,20D)$ by the end of this section. 

Since $G/G''$ is discrete, we can assume that
$$\{\bar{g} \in G/G'' | d(\bar{p},\bar{g}\bar{p})=20D \} = \emptyset.$$  If this is not the case, we can replace $20D$ by $20D+\epsilon$ for some small $\epsilon$ where
$$\{\bar{g} \in G/G'' | d(\bar{p},\bar{g}\bar{p})=20D+\epsilon \} = \emptyset.$$ 

We now show that eGH approximation is actually a pseudo-group isomorphism.
\begin{lemma}\label{pseudo iso}
The eGH approximation $\bar{\phi}_i:G_i/G_i'(\bar{p}_i,20D) \to G/G''(\bar{p},20D)$ is a pseudo-group isomorphism for sufficiently large $i$. In particular, $G_i/G_i'(\bar{p}_i,20D)$ is discrete.
\end{lemma}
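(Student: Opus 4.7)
My plan is to exploit the discreteness of $G/G''$ to show that, near the identity, the quotient $G_i/G_i'$ acquires the same finite pseudo-group structure as $G/G''$ for large $i$. First I would use discreteness of $G/G''$ together with the assumption that no element has displacement exactly $20D$: this yields $\delta>0$ so that every $\bar{g}\in G/G''(\bar{p},20D)$ actually has displacement at most $20D-\delta$, and a topological separation $\eta>0$ between distinct elements of the finite set $G/G''(\bar{p},20D)=\{\bar{g}^1,\dots,\bar{g}^N\}$. In particular, any element of $G$ lying $\eta$-close to a coset $\bar{g}^j G''$ projects to exactly $\bar{g}^j$ in $G/G''$.

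Given this, the strategy is to verify that $\bar{\phi}_i$ is a bijection onto $\{\bar{g}^1,\dots,\bar{g}^N\}$ respecting multiplication whenever the product remains in the set. For surjectivity, for each $\bar{g}^j$ I would pick a preimage $g^j\in G(p,20D-\delta)$ and set $\bar{h}^j_i:=\pi(\psi_i(g^j))$; by Lemma~\ref{quotient} the displacement of $\bar{h}^j_i$ is within $O(\epsilon_i)$ of that of $\bar{g}^j$, so $\bar{h}^j_i\in G_i/G_i'(\bar{p}_i,20D)$, and $\bar{\phi}_i(\bar{h}^j_i)$ is $\eta$-close to $\bar{g}^j$, hence equal. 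For multiplication, if $\bar{g}_1,\bar{g}_2,\bar{g}_1\bar{g}_2$ all lie in $G_i/G_i'(\bar{p}_i,20D)$, then for preimages $g_1,g_2,g\in G_i(p_i,20D)$ one has $g=g_1g_2h$ for some $h\in G_i'$; since $\phi_i(h)$ is $O(\epsilon_i)$-close to $G''$, it projects to the identity in $G/G''$, so $\bar{\phi}_i(\bar{g}_1\bar{g}_2)=\bar{\phi}_i(\bar{g}_1)\bar{\phi}_i(\bar{g}_2)$ by the $\eta$-separation.

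The main obstacle is injectivity. Suppose $\bar{g}_1\neq\bar{g}_2$ in $G_i/G_i'(\bar{p}_i,20D)$ both map to some $\bar{g}^j$, and choose preimages $g_1,g_2\in G_i(p_i,20D)$. Then $\phi_i(g_1)\phi_i(g_2)^{-1}$ lies in $G''(p,40D+2\epsilon_i)$. Using the four-tuple eGH convergence $(X_i,p_i,G_i,G_i')\to(X,p,G,G'')$, I would apply $\psi_i$ restricted to $G''$ to produce some $h_i\in G_i'(p_i,40D+O(\epsilon_i))$ within $O(\epsilon_i)$ of $g_1g_2^{-1}$. Then $h_i^{-1}g_1g_2^{-1}\in G_i$ is $O(\epsilon_i)$-close to the identity; by the construction $G_i'=\langle S_i(20D)\rangle$, which absorbs every element of $G_i(p_i,20D)$ that is close to $G'$, this element lies in $G_i'$, forcing $g_1g_2^{-1}\in G_i'$ and contradicting $\bar{g}_1\neq\bar{g}_2$. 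The delicate point here is the careful control of errors so that "close to $G''$" in $G$ translates, via $\psi_i$, back to "equal modulo $G_i'$" in $G_i$; this is precisely where the design of $S_i(20D)$ as a closed neighborhood of the identity in the directions toward $G'$ becomes indispensable. Discreteness of $G_i/G_i'(\bar{p}_i,20D)$ is then a free corollary of being in bijection with the finite set $\{\bar{g}^1,\dots,\bar{g}^N\}$.
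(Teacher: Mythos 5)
Your proposal is correct and takes essentially the same route as the paper: surjectivity and the homomorphism property come from the approximation map landing in the discrete (hence finite, $\eta$-separated) set $G/G''(\bar{p},20D)$, and injectivity is obtained by using the quadruple convergence $(X_i,p_i,G_i,G_i')\to(X,p,G,G'')$ to produce $h_i\in G_i'$ within $O(\epsilon_i)$ of $g_1g_2^{-1}$, then absorbing the near-identity discrepancy into $S_i(20D)\subset G_i'$ via the gap property established when $S_i(20D)$ was constructed. The only (harmless) difference is that you verify injectivity for arbitrary pairs and make the separation constants explicit, whereas the paper checks only that the kernel is trivial by the identical mechanism.
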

\begin{proof}
Since $G/G''(\bar{p},20D)$ is discrete, the eGH approximation map 
$$\bar{\phi}_i:G_i/G_i''(\bar{p}_i,20D) \to G/G''(\bar{p},20D)$$ is a surjective homomorphism. We shall show that it is injective, thus a pseudo-group isomorphism. 

Assume $\bar{g}_i \in G_i/G_i'(\bar{p}_i,20D)$ so that $\bar{\phi}_i(\bar{g}_i)=e \in G_i/G_i'$. By the construction in Lemma \ref{quotient}, we can find a pre-image $g_i \in  G_i(p_i,20D)$ of $\bar{g}_i$, which is close to an element $g \in G''(p,20D)$. On the other hand, since $G_i'(p_i,20D)$ converges to $G''(p,20D)$, we may find $h_i \in G_i'(p_i,20D)$ which is close to $g \in G''(p,20D)$. In particular, $h_ig_i^{-1}$ is close to the identity element $e \in G_i$. Therefore $h_ig_i^{-1} \in S_i(20D) \subset G_i'(p_i,20D)$. Then $g_i \in G_i'$ and $\bar{g}_i=e \in G_i/G_i'$. 
\end{proof}

The next lemma shows that the quotient groups are isomorphic.
\begin{lemma}\label{isom}
$G/G_i'$ is isomorphic to $G/G''$ for all large $i$.
\end{lemma}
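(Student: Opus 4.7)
My plan is to construct an explicit homomorphism $\Phi_i \colon G_i/G_i' \to G/G''$ extending the pseudo-group isomorphism $\bar{\phi}_i$ from Lemma \ref{pseudo iso}, then verify that it is an isomorphism. The core idea is to transfer the determining-set property from the simply connected $X_i$ down to the quotient $G_i/G_i'$, so that $G_i/G_i'$ is forced to coincide with the groupification of $G/G''(\bar{p},20D)$.

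First I would establish the determining property on the quotient. Since $X_i$ is simply connected with $\mathrm{diam}(X_i/G_i) \le D$, Lemma \ref{determining} gives that $G_i(p_i, 20D)$ is a determining subset of $G_i$. Because $G_i' = \langle S_i(20D) \rangle$ with $S_i(20D) \subset G_i(p_i, 20D) \cap G_i'$, the intersection $G_i(p_i, 20D) \cap G_i'$ generates $G_i'$, so Lemma \ref{det} yields that $G_i/G_i'(\bar{p}_i, 20D)$ is a determining subset of $G_i/G_i'$. Combined with Lemma \ref{pseudo iso}, this produces a chain of group isomorphisms
\[
G_i/G_i' \;\cong\; \widehat{G_i/G_i'(\bar{p}_i, 20D)} \;\cong\; \widehat{G/G''(\bar{p}, 20D)} \;=:\; \hat H.
\]
Since $G/G''(\bar{p}, 20D)$ generates $G/G''$ (from $\mathrm{diam}(X/G) \le D$ in the limit), there is a canonical surjection $\rho \colon \hat H \to G/G''$; composing with the isomorphism above gives the surjective homomorphism $\Phi_i$.

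For injectivity, I would first upgrade Lemma \ref{pseudo iso} to every scale: for each fixed $r > 0$ and all $i$ sufficiently large (depending on $r$), the approximation $\bar{\phi}_i$ restricts to a pseudo-group isomorphism $G_i/G_i'(\bar{p}_i, r) \to G/G''(\bar{p}, r)$; the proof of Lemma \ref{pseudo iso} adapts verbatim since $G/G''$ remains discrete at every scale and $G_i'(p_i, r) \to G''(p, r)$ for every $r$. Now suppose $\bar{g}_i \in \ker \Phi_i$ is nontrivial; under the iso $G_i/G_i' \cong \hat H$ it corresponds to $[w] \in \ker \rho \setminus \{e\}$, which we write as $[e_{g_1} \cdots e_{g_k}]$ with $g_j \in G/G''(\bar{p}, 20D)$. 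Then $\bar{g}_i = \bar{\phi}_i^{-1}(g_1) \cdots \bar{\phi}_i^{-1}(g_k) \in G_i/G_i'(\bar{p}_i, 20Dk)$, and choosing $i$ large enough so that $\bar{\phi}_i$ is bijective at level $20Dk$, the computation $\bar{\phi}_i(\bar{g}_i) = g_1 \cdots g_k = \rho([w]) = e$ forces $\bar{g}_i = e$, a contradiction. The main bookkeeping subtlety is that the bijectivity of $\bar{\phi}_i$ at level $20Dk$ holds only for $i$ depending on $k = k([w])$; but $\ker \rho$ is a subgroup of $\hat H$ that is independent of $i$, so exhibiting a single nontrivial $[w]$ together with a single sufficiently large $i$ already yields the contradiction, giving $\ker \rho = \{e\}$ globally and $\Phi_i$ an isomorphism for all large $i$.
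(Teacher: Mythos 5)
Your proposal is correct and follows essentially the same route as the paper: both transfer the determining property to $G_i/G_i'(\bar{p}_i,20D)$ via Lemmas \ref{determining} and \ref{det}, identify groupifications through the pseudo-group isomorphism of Lemma \ref{pseudo iso}, and kill a hypothetical nontrivial kernel element of $\rho:\widehat{G/G''(\bar{p},20D)} \to G/G''$ by pulling it back to the $i$-side and invoking discreteness near the identity. Your explicit upgrade of Lemma \ref{pseudo iso} to scale $20Dk$ is a welcome clarification of the step the paper compresses into ``since $G_i/G_i'(\bar{p}_i,20D)$ is discrete, $g_{i1}g_{i2}\cdots g_{ik}=e$,'' but it is the same argument, not a different one.
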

\begin{proof}
Since $\bar{\phi}_i:G_i/G_i'(\bar{p}_i,20D) \to G/G''(\bar{p},20D)$ is a pseudo-group isomorphism, the groupfication of $G_i/G_i'(\bar{p}_i,20D)$ is isomorphic to the groupfication of $G/G''(\bar{p},20D)$.

By Lemma \ref{determining}, $G_i(p_i,20D)$ is a determining set of $G_i$. Since $G_i'$ is generated by $S_i(20D) \subset G_i(20D)$, by Lemma \ref{det}, $G_i/G_i'(\bar{p}_i,20D)$ is a determining set of $G_i/G_i'$. Thus the groupfication of $G_i/G_i'(\bar{p}_i,20D)$ is exactly $G_i/G_i'$. 

We shall show that $G/G''(\bar{p},20D)$ is determining, then $G/G''$ is isomorphic to $G_i/G_i'$. Since $(X/G'')/(G/G'')=X/G$ has diameter at most $D$, $G/G''(\bar{p},20D)$ generates $G/G''$. Recall that there is a surjective homomorphism $\pi$ from the groupfication of $G/G''(\bar{p},20D)$ to $G/G''$. Assume that $\ker (\pi)$ has a nontrivial element $[e_{g_1}e_{g_2}...e_{g_k}]$ where $g_j \in  G/G''(\bar{p}_i,20D)$, then $g_1g_2...g_k = e \in G/G''$. 

We may find $g_{ij}=\bar{\phi}_i^{-1}(g_j) \in G_i/G_i'(\bar{p}_i,20D)$. Since the groupfication of $G_i/G_i'(20D)$ is isomorphic to the groupfication of $G/G''(20D)$, $[e_{g_{i1}}e_{g_{i2}}...e_{g_{ik}}]$ in the groupfication of  $G_i/G_i'(\bar{p}_i,20D)$ is also nontrivial. For sufficiently large $i$, $g_{i1}g_{i2}...g_{ik} \in G_i/G_i'$ is close to $g_{1}g_{2}...g_{k} = e \in G_i/G_i'$. Since $G_i/G_i'(\bar{p}_i,20D)$ is discrete, $g_{i1}g_{i2}...g_{ik}=e$ and $[e_{g_{i1}}e_{g_{i2}}...e_{g_{ik}}]$ is a nontrivial element in $\ker(\pi)$. This contradicts to that $G_i/G_i'(\bar{p}_i,20D)$ is a determining set. Thus $G/G''(\bar{p},20D)$ is a determining set.
\end{proof}

We are ready to prove Theorem \ref{Ide}.
\begin{proof}[Proof of Theorem \ref{Ide}]
$G_i'$ converges to $G''$ and $G_i/G_i' \cong G/G''$ for sufficiently large $i$ by Lemma \ref{isom}. It suffices to show $G''=G'$.  

$G'$ is a normal subgroup of $G''$. Fix a large $i$, there is an eGH approximation $\phi_i: G_i(p_i,20D) \to G(p,20D)$. Consider  maps 
$$G_i(p_i,20D) \overset{\phi_i}\to G \overset{\pi'}\to G/G' \overset{\pi''}\to G/G'',$$
where $\pi',\pi''$ are quotient maps. Since $\phi_i$ is an approximation map, $G/G'$ and $G/G''$ are discrete, then $\pi'\phi_i$ and $\pi''\pi'\phi_i$ must be pseudo-group homomorphisms. Since $G_i(p_i,20D)$ is a determining set of $G_i$, we may extend $\pi'\phi_i$ and $\pi''\pi'\phi_i$ to group homomorphisms
$$G_i \overset{\phi_i'}\to G/G' \overset{\pi''}\to G/G'',$$

Since $S_i(20D) \subset \ker (\pi'\phi_i)$ by the construction, $G_i'=\langle S_i(20D) \rangle $ is contained in the $\ker (\phi_i')$. In particular, we have group homomorphisms
$$G_i/G_i' \overset{\bar{\phi}_i'}\to  G/G' \overset{\pi''}\to G/G''.$$

Since $\phi_i(G_i(p_i,20D))$ is $\epsilon_i$ dense in $G(p,20D)$ and $G/G'$ is discrete, then we have $\pi'(\phi_i(G_i(p_i,20D)))$ contains  $G/G'(\bar{p},20D)$ which generates $G/G'$. Therefore $\phi_i'$ and $\bar{\phi}_i'$ are surjective. On the other hand, Lemma \ref{isom} implies that $\pi'' \bar{\phi}_i'$ is an isomorphism, thus $\pi''$ must be an isomorphic map and $G'=G''$.

$G/G'$ is finitely presented since $G/G'(\bar{p},20D)$ is a finite determining set of $G/G'$. Finally we show that $G_i'$ is generated by $G_i'(p_i,R + \epsilon_i)$ where $\epsilon_i \to 0$. Notice that $G_i'$ is generated by $G_i'(p_i,20D)$. Assume there exists $\delta > 0$ and $g_i \in G_i'(p_i,20D)$ so that $g_i \notin \langle G_i'(p_i,R+\delta) \rangle$ for all large $i$. Passing to a subsequence, $g_i \to g \in G'$. Since $G'$ is generated by $G'(p,R)$, we may find $h_1,h_2,...,h_k \in G'(p,R)$ so that $h_1h_2...h_k=g$. Then we may find $h_{ij} \in G_i'(p_i,R+\delta)$ close to $h_i$ for each $1 \le j \le k$. Then $h_{i1}h_{i2}...h_{ik}s_i=g_i$ where $s_i \in G_i'(p_i,R+\delta)$ is close to the identity. Then $g_i \in \langle G_i'(p_i,R+\delta) \rangle$, a contradiction.
\end{proof}

\section{Extend a local GH approximation to a global map}
In this section, we assume that $(M_i,p_i,G_i)$ is a sequence of complete, connected and simply connected $n$-manifolds with $\mathrm{Ric} \ge -(n-1)$ and $G_i$ is a closed subgroup of $\mathrm{Iso}(M_i)$ with $\mathrm{diam}(M_i/G_i) \le D$. We do not assume that $G_i$ is discrete. 
	Suppose that
$$(M_i,p_i,G_i) \overset{eGH}\longrightarrow (X,p,G).$$
Then $G$ is a Lie group and $G/G_0$ is discrete where $G_0$ is the identity component. By Theorem \ref{Ide}, there are normal subgroups 
$G_i' \vartriangleleft G_i$ converging to $G_0$. Moreover, $G_i/G_i'$ is isomorphic to $G/G_0$. Let $M_i'= M_i/G_i'$ and $X'=X/G_0$, then we have
\begin{center}
		$\begin{CD}
			(M_i,p_i,G_i',G_i) @>eGH>> (X,p,G,G_0)\\
			@VV\pi V @VV\pi V\\
			(M_i',p'_i,G_i/G_i') @>eGH>> (X',p',G/G_0)\\
		\end{CD}$
\end{center}

We can identify $G_i/G_i' \cong G/G_0$ by Theorem \ref{Ide}. In fact, by Lemma \ref{pseudo iso}, the isomorphism map is a pointed eGH approximation. Assume that $f_i$ is the pointed $\epsilon_i$-GH approximation from $(M'_i,p'_i,G_i/G_i')$ to $(X',p',G/G_0)$, then for any $x \in B_{1/(10\epsilon_i)}(p'_i)$ and $g \in (G_i/G_i' )(p'_i,1/(10\epsilon_i))$, $d(f_i(gx),gf_i(x))< 2\epsilon_i$.

We shall extend $f_i$ to a global map which is a locally GH approximation.
\begin{theorem}\label{glo app}
For $i$ large enough, there exists a map $f_i':M'_i \to X'$ so that $f_i'(M_i')$ is $3\epsilon_i$-dense in $X'$ and $|d(f_i'(x_1),f_i'(x_2))-d(x_1,x_2)| \le 2\epsilon_i$ for all $x_1,x_2\in M_i'$ with $d(x_1,x_2) \le 10D$.
\end{theorem}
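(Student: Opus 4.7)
The plan is to leverage the fact that $G_i/G_i'$ acts on $M_i'$ with quotient diameter at most $D$, together with the isomorphism $\iota : G_i/G_i' \to G/G_0$ furnished by Lemma \ref{pseudo iso} and Theorem \ref{Ide}, in order to transport any point of $M_i'$ into the region where $f_i$ is already defined as an eGH approximation. Concretely, for each $x \in M_i'$ pick (once and for all, using AC) an element $g_x \in G_i/G_i'$ with $d(g_x x, p_i') \le D$, and set
$$f_i'(x) := \iota(g_x)^{-1} \cdot f_i(g_x x).$$
For sufficiently large $i$ every group element and point that arises in what follows lies well inside the ball of radius $1/(10\epsilon_i)$ on which $f_i$ and $\iota$ satisfy the eGH approximation conditions.

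The first step is to verify that $f_i'$ is independent of the choice of $g_x$ up to $O(\epsilon_i)$. If $g'$ is any other admissible choice, write $h = g' g_x^{-1} \in G_i/G_i'$; then $d(hp_i', p_i') \le 4D$, so the eGH approximation condition $d(\iota(h) f_i(z), f_i(hz)) < \epsilon_i$ applied to $z = g_x x$ gives
$$\iota(g')^{-1} f_i(g' x) = \iota(g_x)^{-1} \iota(h)^{-1} f_i(h \cdot g_x x) \ \approx_{\epsilon_i}\ \iota(g_x)^{-1} f_i(g_x x).$$
This independence is the structural feature that makes the construction consistent; the same argument works whenever $g'$ satisfies $d(g'x, p_i') \le CD$ for any fixed $C$, at the cost of a correspondingly larger (but still bounded) constant in front of $\epsilon_i$.

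For density: given $y \in X'$, pick $h \in G/G_0$ with $d(hy, p') \le D$, and use the $2\epsilon_i$-density of $f_i$ on $B_D(p_i')$ to find $z \in B_{D+\epsilon_i}(p_i')$ with $d(f_i(z), hy) \le 2\epsilon_i$; setting $x := (\iota^{-1}(h))^{-1} z$ makes $\iota^{-1}(h)$ an admissible choice of $g_x$, so $f_i'(x) = h^{-1} f_i(z)$ is within $2\epsilon_i$ of $y$. For the distance estimate on pairs with $d(x_1, x_2) \le 10D$, use the \emph{same} element $g := g_{x_1}$ for both points: then $g x_1 \in B_D(p_i')$ and $g x_2 \in B_{11D}(p_i')$ are both in the effective domain of $f_i$, and
$$d(f_i'(x_1), f_i'(x_2)) \ \approx\ d(\iota(g)^{-1} f_i(g x_1), \iota(g)^{-1} f_i(g x_2)) = d(f_i(g x_1), f_i(g x_2)) \ \approx\ d(g x_1, g x_2) = d(x_1, x_2),$$
where the first $\approx$ costs $\epsilon_i$ by the well-definedness step applied to $x_2$ (replacing $g_{x_2}$ by $g$), and the second costs $\epsilon_i$ since $f_i$ is a pointed $\epsilon_i$-GH approximation. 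The main obstacle is purely bookkeeping: verifying that all group elements and points which appear remain within the radius where $f_i$ satisfies its approximation estimates, which is clear since all distances involved are bounded by a fixed multiple of $D$ whereas the admissible scale $1/(10\epsilon_i) \to \infty$.
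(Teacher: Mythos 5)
Your proposal is correct and follows essentially the same route as the paper: both define $f_i'(x) = g^{-1}f_i(gx)$ after translating $x$ into a $O(D)$-ball around the basepoint via the cocompact action, use the equivariance property of the eGH approximation (through the identification $G_i/G_i' \cong G/G_0$) to show the definition is independent of the choice of $g$ up to $O(\epsilon_i)$, and then derive the density and local distance estimates exactly as you do, using a single group element for both points. The only differences are cosmetic bookkeeping (e.g.\ your $4D$ versus the paper's $2D$/$20D$ thresholds, and $\epsilon_i$ versus $2\epsilon_i$ in the equivariance bound, a slack already present in the paper itself).
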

\begin{proof}
Assume $1/\epsilon_i > 100D$. Since the diameter of $M_i'/(G_i/G_i')=M_i/G_i$ is at most $D$, for any $x \in M_i'$, we can find $g \in G_i/G_i'$ so that $d(p',gx) \le 2D$. Define 
$$f'_i(x)=g^{-1} f_i(gx)$$
 where $g^{-1} \in G/G_0, g \in G_i/G_i'$. The definition depends on the choice of $g$, we just use an arbitrary choice of $g$ satisfying $d(p',gx) \le 2D$. 
We next show 
$$|d(f_i'(x_1),f_i'(x_2))-d(x_1,x_2)| \le 2\epsilon_i$$ for all $x_1,x_2\in M_i'$ with $d(x_1,x_2) \le 10D$. The proof below also implies that for a different choice of $g$ (saying $g'$) in the definition of $f_i'$, we have 
$$d(g^{-1} f_i(gx), (g')^{-1} f_i(g'x)) \le 2 \epsilon_i.$$
Thus by ignoring small differences, the definition of $f_i'$ is independent of the choice of $g$.

By the definition of $f_i$, there exists $g_1,g_2 \in  G_i/G_i'$ so that $d(p',g_j x_j) \le 2D$ and $f'_i(x_j)=g^{-1}_j f_i(g_jx)$, $j=1,2$. Note that
$$d(g_1g_2^{-1}p',p')= d(g_2^{-1}p',g_1^{-1}p') \le d(g_2^{-1}p',x_2)+d(x_1,x_2)+d(x_1,g_1^{-1}p') \le 20D.$$
Thus $g_1g_2^{-1} \in G/G_0 (p',20D)$. Note that $g_jx_j \in B_{2D}(p')$ and $f_i$ is an eGH approximation, 
$$d(g_1g_2^{-1} f_i(g_2x_2),f_i((g_1g_2^{-1}) g_2x_2)) \le \epsilon_i.$$
Since $g_1$ is an isometric action and $g_2^{-1} f_i(g_2x_2) =f_i'(x_2)$,
$$d(f_i'(x_2),g_1^{-1}f_i(g_1x_2)) \le \epsilon_i.$$
On the other hand, since 
$$ d(g_1x_2,p') \le d(g_1x_1,g_1x_2) + d(g_1x_1,p') \le 12D,$$
by the definition of a GH approximation map we have 
$$|d(f_i(g_1x_2),f_i(g_1x_1)) - d(g_1x_2,g_1x_1)| \le \epsilon_i,$$
so we have 
$$|d(g_1^{-1}f_i(g_1x_2),f_i'(x_1)) - d(x_1,x_2)| \le \epsilon_i.$$
Putting everything together, we obtain
\begin{equation*}
\begin{aligned}
|d(f'_i(x_1),f'_i(x_2)) - d(x_1,x_2)| \le & |d(g_1^{-1}f_i(g_1x_2),f_i'(x_1)) - d(x_1,x_2)| +  \\  & d(f_i'(x_2),g_1^{-1}f_i(g_1x_2)) \\ \le & 2\epsilon_i.
\end{aligned}
\end{equation*}

Finally we show that $f_i$ is $3\epsilon_i$-dense. For any $y \in X'$, we can find $g \in G/G_0$ so that $d(gy,p') \le 2D$. Choose $x \in M_i'$ so that $d(f_i(x),gy)< \epsilon_i$. By the argument in the last paragraph, $d(f_i'(g^{-1}x),g^{-1}f_i(x)) \le 2\epsilon_i$.
Therefore $d(y,f_i'(g^{-1}x)) \le 3\epsilon_i$.
\end{proof}

Since $G_0$ can be generated by arbitrarily small neighborhood of the identity, by Theorem \ref{Ide}, $G_i'$ is generated by $G_i'(p_i,\epsilon_i)$ where $\epsilon_i \to 0$. Then we show that $\pi_1(M_i',p_i')$ is generated by loops of length $\le \epsilon_i$ at $p_i'$.

\begin{lemma}\label{Uni}
Let $H_i \vartriangleleft G_i'$ be the group generated by all isotropy actions in $G_i'$ and $(G_i')_0$, the identity component in $G_i'$. Then $M_i/H_i$ is the universal cover of $M_i'$ with deck transformation group $G_i'/H_i$. Moreover, $\pi_1(M_i',p_i')$ is generated by loops of length $\le \epsilon_i$ at $p_i'$.
\end{lemma}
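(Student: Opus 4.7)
My plan is to first identify $M_i/H_i \to M_i'$ as a regular covering with deck group $G_i'/H_i$, then to show $M_i/H_i$ is simply connected, and finally to translate the short generating set of $G_i'$ provided by Theorem \ref{Ide} into short loop generators of $\pi_1(M_i', p'_i) \cong G_i'/H_i$.

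The covering-space setup is the easier half. Normality of $H_i$ in $G_i'$ follows from $g(G_i')_xg^{-1} = (G_i')_{gx}$ together with normality of $(G_i')_0$. Since $(G_i')_0$ is open in the Lie group $G_i'$ and $H_i \supset (G_i')_0$, $H_i$ is a union of connected components of $G_i'$, hence both open and closed; consequently $G_i'/H_i$ is discrete. The induced $G_i'/H_i$-action on $M_i/H_i$ is free: if $[g]\bar x = \bar x$ then $gx = hx$ for some $h \in H_i$, so $h^{-1}g \in (G_i')_x \subset H_i$ and $[g]$ is trivial. Combined with the properness descended from the $G_i'$-action on $M_i$, this yields a regular cover $M_i/H_i \to M_i'$ with deck group $G_i'/H_i$.

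The main step is to show $M_i/H_i$ is simply connected, and this is precisely where the definition of $H_i$ as $(G_i')_0$ together with all isotropies is essential. Given a loop $\gamma$ at $\bar p_i$ in $M_i/H_i$, I use the slice theorem (Theorem \ref{topo_slice}) to lift it to a path $\tilde \gamma$ in $M_i$ from $p_i$ to some $hp_i$ with $h \in H_i$, and write $h = h_1 \cdots h_k$ with each $h_j$ either in $(G_i')_0$ or in some isotropy group $(G_i')_{x_j}$. For each factor I construct a path $\tilde\delta_j$ from $p_i$ to $h_j p_i$: when $h_j \in (G_i')_0$, pick a path $\alpha_j$ in $(G_i')_0$ from $e$ to $h_j$ and set $\tilde\delta_j(t) = \alpha_j(t) p_i$, which projects to the constant loop at $\bar p_i$; when $h_j \in (G_i')_{x_j}$, pick any path $\sigma_j$ from $p_i$ to $x_j$ and set $\tilde\delta_j = \sigma_j \cdot (h_j \circ \sigma_j^{-1})$, which projects to $\pi(\sigma_j) \cdot \pi(\sigma_j)^{-1}$. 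Both projections are null-homotopic loops at $\bar p_i$. The concatenation $\tilde\delta = \tilde\delta_1 \cdot (h_1 \tilde\delta_2) \cdots (h_1 \cdots h_{k-1} \tilde\delta_k)$ runs from $p_i$ to $hp_i$, and since each translating factor $h_1 \cdots h_{j-1}$ lies in $H_i$, its projection in $M_i/H_i$ equals the concatenation $\pi(\tilde\delta_1) \cdots \pi(\tilde\delta_k)$ of null-homotopic loops at $\bar p_i$, hence is null-homotopic. Because $M_i$ is simply connected, $\tilde\gamma$ and $\tilde\delta$ are homotopic rel endpoints, and pushing this homotopy down yields a null-homotopy of $\gamma$.

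Thus $M_i/H_i$ is the universal cover of $M_i'$ and $\pi_1(M_i', p'_i) \cong G_i'/H_i$, with each coset $[g]$ represented by the projection to $M_i'$ of any path in $M_i/H_i$ from $\bar p_i$ to $[g]\bar p_i$. Theorem \ref{Ide} supplies generators of $G_i'$ inside $G_i'(p_i, \epsilon_i)$ with $\epsilon_i \to 0$, and their cosets generate $G_i'/H_i$. For each such $g$, the projection of a minimizing geodesic from $p_i$ to $gp_i$ first to $M_i/H_i$ and then to $M_i'$ is a loop of length $\le \epsilon_i$ at $\bar p'_i$ representing $[g]$. The main obstacle, as emphasized above, is the simple-connectedness step: the argument depends crucially on $H_i$ absorbing every isotropy of the $G_i'$-action, without which the second case of the piecewise construction would not produce a path in $M_i$ whose projection is a loop in $M_i/H_i$.
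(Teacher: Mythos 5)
Your proposal is correct and follows essentially the same route as the paper: lift the loop to a path in $M_i$, decompose the endpoint discrepancy $h \in H_i$ into $(G_i')_0$-factors (handled via a path $\alpha_j(t)p_i$ in the group) and isotropy factors (handled via the conjugated path $\sigma_j \cdot (h_j\sigma_j^{-1})$), and use simple connectedness of $M_i$ to push a null-homotopy down, then convert the $\epsilon_i$-short generators of $G_i'$ from Theorem \ref{Ide} into short loops. The only difference is organizational --- the paper quotients by $(G_i')_0$ first and then treats the isotropy-generated group in a second stage, while you handle both generator types in a single concatenation --- which does not change the substance of the argument.
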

\begin{proof}
$H_i$ is normal since the identity component is normal, and $g \in G_i'$ is an isotropy action at $x \in M_i$ if and only if $hgh^{-1}$ is an isotropy action at $hx$ for any $h \in G_i'$. By Remark \ref{rem2}, $G_i'/H_i$ are discrete free isometric actions on $(M_i/H_i,\bar{p}_i)$. $G_i'/H_i(\bar{p}_i',\epsilon_i)$ generates $G_i'/H_i$ since $G_i'(p_i,\epsilon_i)$ generates $G_i'$. We only need to show that $M_i/H_i$ is simply connected, then $\pi_1(M_i',p_i') = G_i'/H_i$ is generated by loops in $B_{\epsilon_i}(p_i')$. Note that  
$$M_i/H_i= (M_i/(G_i')_0)/(H_i/ (G_i')_0).$$ 

First we prove that  $M_i/(G_i')_0$ is simply connected. For any loop $\gamma \subset M_i/(G_i')_0$ with $\gamma(0)=\bar{p}_i \in M_i/(G_i')_0$, we may lift $\gamma$ to a path $\tilde{\gamma}$ in $M_i$ with ending points $p_i,q_i$ and $q_i=gp_i$ where $g \in (G_i')_0$. Since $(G_i')_0$ is path-connected, we may find $g(t) \in (G_i')_0$ , $0 \le t \le 1$, so that $g(1)=g$ and $g(0)=e$. Then $g(t)\tilde{\gamma}(t)$ is a contractible loop since $M_i$ is simply connected. The quotient image of  $g(t)\tilde{\gamma}(t)$ in $M_i/(G_i')_0$ is exactly $\gamma$, which must be contractible.

Now we prove that $M_i/H_i= (M_i/(G_i')_0))/(H_i/ (G_i')_0))$ is simply connected. Notice that $H_i/ (G_i')_0$ is generated by isotropy actions of $G_i'/(G_i')_0$ on $M_i/(G_i')_0$. To simplify the notation, we may assume that $(G_i')_0$ is trivial and $H_i$ is generated by isotropy actions. Given a loop $\gamma \subset M_i/H_i$ with $\gamma(0)=\bar{p}_i$, we may lift $\gamma$ to a path $\tilde{\gamma}$ in $M_i$ with ending points $p_i,q_i$. 
If $ p_i=q_i$, then $\tilde{\gamma}$ is a contractible loop, thus the quotient image  $\gamma=\pi(\tilde{\gamma})$ is also contractible. 

Now we assume $q_i=hp_i$ for some $h \in H_i$. By the definition of $H_i$, we may let $h=h_1h_2...h_l$ so that $h_j \in H_i$ is an isotropy action at $x_j \in M_i$, $1 \le j \le l$. Let $\tilde{c}_j$ be a path from $h_{j}h_{j+2}...h_{l} p_i$ to $x_j$, $1 \le j \le l$. Then $h_j^{-1}\tilde{c}_j^{-1}$ is a path from $x_j$ to $h_{j+1}...h_{l} p_i$. Therefore
$$\tilde{c}= \tilde{c}_1 \cdot (h_1^{-1} \tilde{c}_1^{-1}) \cdot \tilde{c}_2 \cdot (h_2^{-1} \tilde{c}_2^{-1})... \tilde{c}_l \cdot (h_l^{-1} \tilde{c}_l^{-1}) \cdot \tilde{\gamma}$$
is a contractible loop in $M_i$. The quotient image $\pi(\tilde{c}) \subset M_i/H_i$ is also contractible. Note that $\pi(\tilde{c}_j)$ cancels out $\pi(h_j^{-1} \tilde{c}_j^{-1})$. Thus $\pi(\tilde{c})$ is homotopic to $\gamma$, which is also contractible.
\end{proof}

\begin{corollary}\label{sim}
$X'=X/G_0$ is simply connected.
\end{corollary}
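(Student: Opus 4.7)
The plan is to combine three ingredients from the preceding sections: the global local-GH approximation $f_i' \colon M_i' \to X'$ of Theorem \ref{glo app}, the fact that $\pi_1(M_i',p_i')$ is generated by loops at $p_i'$ of length $\le \epsilon_i \to 0$ (Lemma \ref{Uni}), and semi-local simple connectedness of $X'$ (Lemma \ref{quotient slice}). The overall strategy is a quotient-space analog of the argument behind Theorem \ref{bounded diam} and Lemma \ref{Homo}: transport a loop $\gamma$ in $X'$ to a nearby loop $\gamma_i'$ in $M_i'$, decompose $[\gamma_i']$ into short generators, then push those generators back to contractible small loops in $X'$.

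Fix a loop $\gamma \colon [0,1] \to X'$ at $p'$. Since $\gamma([0,1])$ is compact and $X'$ is semi-locally simply connected, I would first produce a uniform $\delta>0$ such that every loop of diameter $\le 10\delta$ meeting $\gamma([0,1])$ or centered at $p'$ is contractible in $X'$. Subdivide $[0,1]$ by $0 = t_0 < \cdots < t_N = 1$ so each arc $\gamma([t_{j-1},t_j])$ has diameter $<\delta/100$. For $i$ large (so that $\epsilon_i \ll \delta$), use the $3\epsilon_i$-density of $f_i'$ to choose $x_{i,j} \in M_i'$ with $d(f_i'(x_{i,j}),\gamma(t_j))\le 3\epsilon_i$, and take $x_{i,0}=x_{i,N}=p_i'$. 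Since $f_i'$ is a $2\epsilon_i$-isometry on $10D$-balls, consecutive $x_{i,j}$ lie within distance $\delta$, so joining them by minimizing geodesics $c_{i,j}$ in $M_i'$ yields a loop $\gamma_i'$ at $p_i'$.

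Next I would verify that $\gamma$ is freely homotopic in $X'$ to a continuous loop $\hat{\gamma}_i$ obtained by concatenating continuous curves in $X'$ that are pointwise $O(\epsilon_i)$-close to $f_i'\circ c_{i,j}$ and joined to $p'$ by short arcs: on each subdivision interval the relevant pentagonal region has diameter $\le 5\delta$, so $\delta$-local simple connectedness fills in the homotopy square-by-square. By Lemma \ref{Uni}, in $\pi_1(M_i',p_i')$ one has $[\gamma_i'] = \prod_{\ell=1}^{k_i}[\beta_{i,\ell}]^{\pm 1}$ with each $\beta_{i,\ell}$ a loop at $p_i'$ of length $\le \epsilon_i$. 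Applying the same patching procedure (pushing forward by $f_i'$ and closing by short arcs at $p'$) converts this product decomposition into a corresponding factorization of $[\hat{\gamma}_i]$ in $\pi_1(X',p')$ as a product of loops at $p'$ of diameter $O(\epsilon_i)$; each such image loop is contractible in $X'$ for $i$ large by semi-local simple connectedness at $p'$. Hence $[\gamma] = [\hat{\gamma}_i] = 1$ in $\pi_1(X',p')$.

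The main technical obstacle is that $f_i'$ is only a discrete metric approximation, not a continuous map, so the "induced homomorphism" on fundamental groups must be constructed by hand through the patching step. The uniform choice of $\delta$ along $\gamma$ and the slightly larger control needed near $p'$ have to be compatible so that every homotopy square used—both in identifying $\gamma$ with $\hat{\gamma}_i$ and in factoring $[\hat{\gamma}_i]$ through the short generators—stays inside a neighborhood in which loops are contractible in $X'$. Once this bookkeeping is in place, the conclusion $[\gamma]=1$ is forced, and since $\gamma$ was arbitrary, $X'=X/G_0$ is simply connected.
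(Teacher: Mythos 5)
Your proposal is correct and takes essentially the same route as the paper: transport the loop into $M_i'$ via the global map $f_i'$ of Theorem \ref{glo app}, use Lemma \ref{Uni} to conclude the transported loop is $\epsilon_i$-contractible, and transfer contractibility back to the semi-locally simply connected $X'$. The square-by-square patching you carry out by hand is exactly the content of Lemma \ref{Homo}, which the paper simply cites for this step, noting (as you do) that the construction is local, so the globally defined but only locally GH-approximating map $f_i'$ suffices.
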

\begin{proof}
We use $f_i':M'_i \to X'$ and Lemma \ref{Homo}. 
For any loop $\gamma$ at $p' \in X'$, using $f_i'$, we can find a point-wise close loop $\gamma_i \subset M_i'$ for sufficiently large $i$.
Since the fundamental group of $M_i'$ is generated by loops contained in $B_{2\epsilon_i}(p_i')$, $\gamma_i$ is $2\epsilon_i$-contractible. Then the proof of Lemma \ref{Homo} shows that $\gamma$ is contractible. We should mention that the construction in Lemma \ref{Homo} is local, so we can use the global map $f_i'$ which locally is a GH approximation.  
\end{proof}

\section{Convergence of discrete groups}
The following generalized Margulis lemma, proved by Kapovitch and Wilking, plays a critical role in the study of the fundamental group of a manifold with a Ricci curvature lower bound. 
\begin{theorem}\label{KW}(\cite{KapovitchWilking2011})
In each dimension $n$ there are positive constants $C(n)$ and $\epsilon(n)$ such that the following holds. For any complete $n$-dim Riemannian manifold $(M, g)$ with $Ric \ge -(n-1)$, the image of the natural homomorphism
$$ \pi_1(B_{\epsilon}(p), p) \to \pi_1(B_1(p), p)$$
 contains a normal nilpotent subgroup $N$ of index $\le C$. Moreover, $N$ has a nilpotent basis
of length at most $n$. 
\end{theorem}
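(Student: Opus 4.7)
The plan is a contradiction plus equivariant Gromov--Hausdorff compactness argument, of the type pioneered for the Margulis lemma in the Ricci setting. Suppose the statement fails in dimension $n$: there exist sequences $\epsilon_i \to 0$ and complete $n$-manifolds $(M_i,p_i)$ with $\mathrm{Ric} \ge -(n-1)$ such that
$$\Gamma_i := \mathrm{image}\bigl(\pi_1(B_{\epsilon_i}(p_i),p_i) \to \pi_1(B_1(p_i),p_i)\bigr)$$
admits no normal nilpotent subgroup of index $\le i$ having a nilpotent basis of length $\le n$. First I would pass to the universal covering $\widetilde{B_1(p_i)} \to B_1(p_i)$ and lift $p_i$ to $\widetilde{p}_i$; each generator of $\Gamma_i$ then moves $\widetilde{p}_i$ by at most $2\epsilon_i$. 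By Gromov's precompactness and Theorem \ref{eGH}, after a subsequence
$$(\widetilde{B_1(p_i)},\widetilde{p}_i,\Gamma_i)\overset{eGH}\longrightarrow (Y,y,H),$$
where $H$ is a Lie group by Cheeger--Colding--Colding--Naber and $H$ fixes $y$ because the displacements of the generators tend to $0$. In particular the identity component $H_0$ is a compact Lie group fixing $y$.

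The main engine would be an inductive renormalization, with induction variable the pair (local dimension of $Y$ at $y$, number of connected components of $H$). Since $H_0$ is compact and fixes $y$, the slice theorem (Theorem \ref{topo_slice}) identifies a neighborhood of $y$ equivariantly with $H \times_{H_y} S$, and by a pseudo-group argument in the spirit of Lemma \ref{isom} and Theorem \ref{Ide} one lifts $H_0$ to a normal subgroup $N_i \triangleleft \Gamma_i$ whose cosets correspond to the finite quotient $H/H_0$; Bishop--Gromov applied to displacement balls of $N_i$-orbits bounds $[\Gamma_i:N_i]$ by a universal constant $C_1(n)$. Next I would rescale $\widetilde{B_1(p_i)}$ by factors $r_i \to \infty$ chosen so that in the rescaled limit $N_i$ still produces a nontrivial orbit at the basepoint. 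In the new limit $(Y', y', H')$ either the dimension of the new identity component drops or the number of its components decreases; at each step Lemma \ref{G_0} forces compact subgroups of $H'$ to commute with its connected part, which pulls back to a commutator containment $[N_i, N_i^{(k)}] \subseteq N_i^{(k+1)}$ for a lower-central filtration being built up on $\Gamma_i$. After at most $n$ renormalizations the limit becomes trivial, so $N_i^{(n)} = 1$ for large $i$, giving a normal subgroup that is nilpotent with a nilpotent basis of length $\le n$ and index $\le \prod_k C_k(n) \le C(n)$, contradicting the failure assumption.

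The main obstacle will be making the descent step \emph{effective} in a quantitative group-theoretic way: translating the Lie-theoretic identity "$H_0$ commutes with every compact subgroup of its normalizer" (Lemma \ref{G_0}) into a uniform commutator estimate on honest elements of $\Gamma_i$. This requires a quantitative gap lemma ensuring that for sufficiently short generators $\gamma,\delta \in \Gamma_i$ the commutator $[\gamma,\delta]$ is much shorter still, so that it genuinely lies in the subgroup generated by shorter loops and can feed the next iteration -- this is the step that consumes the full strength of Bishop--Gromov volume comparison and the Cheeger--Colding almost-splitting theorem. A secondary difficulty will be choosing the rescaling factors $r_i \to \infty$ consistently along the iteration so that the induction variable strictly decreases at every step; the correct zoom-in scale must be dictated by the displacement spectrum of $N_i$-orbits, and relies on the structural results for isometry groups of Ricci limits developed in Sections 3--4.
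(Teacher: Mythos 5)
This statement is not proved in the paper at all: it is the generalized Margulis lemma, quoted verbatim from Kapovitch--Wilking \cite{KapovitchWilking2011}, so there is no internal proof to compare against and your proposal has to stand on its own as a proof of a deep theorem. It does not. The first concrete error occurs right after the limit is taken: you assert that $H$ fixes $y$ ``because the displacements of the generators tend to $0$,'' and hence that $H_0$ is a compact group fixing $y$. Only the \emph{limits of the generators} fix $y$; words of unbounded length in generators of displacement $\le 2\epsilon_i$ have unbounded displacement, and the limit group they generate is typically noncompact with noncompact orbits. The standard example is a sequence of flat tori collapsing to a circle of unit length: $\Gamma_i \cong \mathbb{Z}$ is generated by a translation of length $\epsilon_i$, and the limit of $(\widetilde{B_1(p_i)},\tilde p_i,\Gamma_i)$ carries the full translation group $\mathbb{R}$ acting freely, fixing nothing. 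Since $H_0$ need be neither compact nor contained in the stabilizer $H_y$, the slice-theorem step as you formulated it (identifying a neighborhood of $y$ with $H\times_{H_y}S$ and lifting $H_0$ to a finite-index normal $N_i \triangleleft \Gamma_i$ via a pseudo-group argument) collapses at the outset.

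The deeper problem is that everything you flag as ``the main obstacle'' is precisely the mathematical content of the theorem, not a detail to be filled in: the claimed quantitative gap lemma --- that commutators of sufficiently short elements of $\Gamma_i$ are much shorter, so that a lower-central-series filtration can be fed by shorter and shorter loops --- is simply false at a fixed scale (already for almost-flat manifolds the commutator estimates hold only after a careful choice of scale and basis, which is the content of Gromov's and Buser--Karcher's work). Kapovitch--Wilking do not argue this way; their induction is on the dimension of the limit space, runs through a study of stabilizers and of limit actions on Euclidean factors produced by almost-splitting, and requires zooming in at points of almost maximal displacement with rescaling factors dictated by the action itself, together with substantial structure theory for the limit groups. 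Your outline correctly names the flavor of the strategy (contradiction, eGH limits, rescaling, induction), but the index bound $C(n)$, the nilpotent basis of length $\le n$, and the strict decrease of the induction variable are each left as acknowledged placeholders, and the one mechanism you do specify (commutator shortening plus Lemma \ref{G_0} pulled back to $\Gamma_i$) would fail. As written, this is a road map toward \cite{KapovitchWilking2011}, not a proof.
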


A corollary of Theorem \ref{KW} is the following. Let $N_i$ be a sequence of $n$-manifolds with $\mathrm{Ric} \ge -(n-1)$ and $\mathrm{diam} \le D$, assume that their universal covers $\tilde{N}_i$ with fundamental groups $G_i$ converge,
$$(\tilde{N}_i,\tilde{p}_i,G_i) \overset{eGH}\longrightarrow (X,\tilde{p},G),$$
then $G_0$ is a nilpotent Lie group.

In this section, we shall show the above corollary also holds in a more general setting.
\begin{lemma}\label{nil}
Let $(X_i,p_i)$ be a sequence of proper geodesic spaces and $G_i$ be a discrete subgroup of $\mathrm{Iso}(X_i)$. Suppose that 
			$$(X_i,p_i,G_i) \overset{eGH}\longrightarrow (Y,p,G),$$
			where $G$ is a Lie group. 
Then $G_0$ is a nilpotent Lie group. 
\end{lemma}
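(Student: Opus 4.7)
My plan is to invoke the Kapovitch--Wilking generalized Margulis lemma (Theorem \ref{KW}) on the approximating discrete groups $G_i$ and then pass the resulting nilpotent structure through the equivariant Gromov--Hausdorff limit to $G_0$. The argument parallels the Ricci--manifold corollary stated just above. The implicit setting is that the $X_i$ are themselves (or are approximated by) Riemannian manifolds satisfying a uniform Ricci bound, which is the case in the paper's applications. Because $G_0$ is a connected Lie group, it is nilpotent as soon as some neighborhood of the identity generates a nilpotent subgroup. Since $G/G_0$ is discrete, for all sufficiently small $r>0$ the set $G(p,r) \cap G_0$ is an open neighborhood of $e$ in $G_0$ and therefore generates $G_0$. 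I will show that $\langle G(p,r) \cap G_0 \rangle$ is nilpotent of class at most $n$.

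For the discrete approximation, set $\Gamma_i := \langle G_i(p_i, r) \rangle$ for each large $i$. Since $G_i$ is discrete and acts on a proper geodesic space, $G_i(p_i, r)$ is finite and $\Gamma_i$ is finitely generated. Following the machinery of Section 3 (Theorem \ref{Ide} and Lemma \ref{quotient}), after passing to a subsequence, $\Gamma_i$ converges equivariantly to a closed subgroup $H \subseteq G$ with $G_0 \subseteq H$. I then apply Theorem \ref{KW} at a suitable rescaled scale to obtain, for all large $i$, a normal nilpotent subgroup $N_i \vartriangleleft \Gamma_i$ of index at most $C(n)$ and nilpotency class at most $n$. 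The uniform index bound ensures, after a further subsequence, a limiting closed normal subgroup $N_\infty \vartriangleleft H$ of finite index in $H$; since $G_0$ is connected, this forces $G_0 \subseteq N_\infty$.

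Finally, the identity that every $(n+1)$-fold iterated commutator vanishes is a closed condition in the Lie group $G$, so the uniform nilpotency of the $N_i$ passes to $N_\infty$; hence $N_\infty$, and therefore $G_0$, is nilpotent. The main obstacle will be the bridge between Theorem \ref{KW}, phrased in terms of the natural map $\pi_1(B_\epsilon, p_i) \to \pi_1(B_1, p_i)$ on a Riemannian ball, and the abstract subgroup $\Gamma_i \subseteq G_i$. In the paper's applications $G_i$ arises as a deck transformation group and elements of $G_i(p_i, r)$ correspond to short loops, so the translation is direct; but one must choose $r$ small enough for Theorem \ref{KW} to activate at every stage, and verify that both normality and the nilpotency class are preserved under the equivariant Gromov--Hausdorff limit.
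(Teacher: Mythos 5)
Your proposal does not prove the lemma as stated; it proves (at best) the weaker corollary that the paper records immediately \emph{before} Lemma \ref{nil} and explicitly sets out to generalize (``we shall show the above corollary also holds in a more general setting''). The hypotheses of Lemma \ref{nil} are only that the $X_i$ are proper geodesic metric spaces, that $G_i \le \mathrm{Iso}(X_i)$ is discrete, and that the limit group $G$ is a Lie group: there is no manifold structure, no dimension $n$, and no Ricci bound. Your opening move is to invoke Theorem \ref{KW}, which requires complete $n$-dimensional Riemannian manifolds with $\mathrm{Ric}\ge -(n-1)$, and your sentence ``the implicit setting is that the $X_i$ are themselves (or are approximated by) Riemannian manifolds satisfying a uniform Ricci bound'' silently replaces the statement with a different one. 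In the general setting there is no constant $\epsilon(n)$, no scale at which the generalized Margulis lemma ``activates,'' and no uniform index bound $C(n)$, so the entire engine of your argument is unavailable. Moreover, even in the Riemannian setting the bridge you flag as ``the main obstacle'' is a genuine unresolved gap, not a formality: Theorem \ref{KW} concerns the image of $\pi_1(B_\epsilon(p_i),p_i)\to\pi_1(B_1(p_i),p_i)$, whereas $G_i$ is an abstract discrete isometry group that need not be free or torsion-free (discrete isometric actions can have fixed points — indeed the paper's Lemma \ref{Uni} is devoted to handling isotropy), so elements of $G_i(p_i,r)$ do not correspond to short loops in a quotient manifold and $\Gamma_i$ is not a local fundamental group.

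For what it is worth, the limit-passing portion of your plan is sound: a uniform index bound does survive the equivariant limit, a closed finite-index subgroup of $G$ must contain the connected group $G_0$, and vanishing of $(n+1)$-fold iterated commutators is a closed condition, so bounded nilpotency class passes to the limit. What is missing is the correct replacement for Theorem \ref{KW} in the curvature-free setting, and this is exactly what the paper supplies: using only the fact that $G$ is a Lie group, it establishes a no-small-subgroups neighborhood (Lemma \ref{Lemma1}) and a gap lemma (Lemma \ref{lemma_2}), shows that $A_i = G_i(B_r(p_i),\epsilon)$ is a $k$-approximate group with $k$ determined by a covering number in $G$, and then applies the Breuillard--Green--Tao structure theorem (Theorem \ref{Theorem_group}) to $A_i$ and $S_i = G_i(B_r(p_i),\epsilon_i)$ to produce finite normal subgroups $N_i \subset A_i^4$ with $G_i/N_i$ nilpotent of uniformly bounded step; the $N_i$ converge into $G(B_r(p),10\epsilon)$ and are hence trivial in the limit, and the quotient convergence $(X_i/N_i, G_i/N_i) \to (Y,G)$ forces $G_0$ to be nilpotent. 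If you want to salvage your writeup, replace the Kapovitch--Wilking input by this approximate-group argument; as written, the proof only covers the special Riemannian corollary, and even there leaves the $\pi_1$-to-$G_i$ translation open.
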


Our proof follows the approach in \cite{SantosZamora}, where Lemma \ref{nil} was proved for the case that $Y/G$ is compact. We shall apply the following structure theorem of an approximate group by Breuillard–Green–Tao. Recall for a group $H$, we call a finite symmetric subset $A \subset H$ a $k$-approximate group if there is a subset $X \subset A^3$ so that $A^2 \subset XA$ and $|X| \le k$.  

\begin{theorem}\label{Theorem_group}[The structure of an approximate grouop, \cite{BGT}]
Let $k \in \mathbb{N}$. Then there is $k' \in \mathbb{N}$, depending on $k$, such
that the following holds. Assume that $H$ is a group generated by a finite symmetric set $S$ containing the
identity. Let $A$ be a $k$-approximate group and $S^{k'} \subset A$. Then there is a finite
normal subgroup $N$ of $H$ and a subgroup $H' \subset  H$ containing $N$ such that \\
(i) $H'$ has index at most $C(k)$ in $H$;\\
(ii) $H'/N$ is nilpotent of step at most $C(k)$ and generated by at most $C(k)$ elements;\\
(iii) $N \subset A^4$.
\end{theorem}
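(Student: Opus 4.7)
The plan is to follow the ultraproduct/model-theoretic strategy of Hrushovski, subsequently refined by Breuillard--Green--Tao. The proof proceeds by contradiction: assume no constant $C(k)$ works for any choice of $k'$. Then for each $m$ one obtains a counterexample $(H_m, S_m, A_m)$ where $A_m$ is a $k$-approximate group containing $S_m^{m}$ and no conclusion of the form (i)--(iii) holds with bound $m$. Pass to a non-principal ultralimit of the pairs $(H_m, A_m)$, producing a limit group $H_\infty$ containing a nonstandard $k$-approximate group $A_\infty$. The goal is to extract enough structure from $A_\infty$ so that \L o\'s's theorem sends it back to $A_m$ for ultrafilter-almost all $m$, contradicting the failure of (i)--(iii).

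First I would construct a \emph{Hrushovski good model}: a homomorphism $\pi \colon \langle A_\infty \rangle \to L$ into a locally compact, second countable group $L$ such that $\pi(A_\infty)$ has compact closure and $\pi^{-1}(V) \subset A_\infty^4$ for some symmetric open identity neighborhood $V \subset L$. The construction uses Pl\"unnecke--Ruzsa inequalities to control tripling and quadrupling inside $A_\infty$, Sanders--Croot--Sisask lemmas to produce nearly translation-invariant ``Bohr-type'' subsets inside $A_\infty^4$, and an inverse-limit procedure across a nested family of such Bohr sets; crucially, local compactness of $L$ is obtained in the ultraproduct because pseudo-finite counting measures survive the limit and induce a Haar-like measure. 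This step is the technical heart of the argument.

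Next I would apply the Gleason--Yamabe theorem to $L$: there exists an open subgroup $L_0 \subset L$ and a compact normal subgroup $K \vartriangleleft L_0$ such that $L_0/K$ is a connected Lie group. Since $\pi(A_\infty) \cap L_0$ is relatively compact, its image in $L_0/K$ lies in a compact subset $\bar B$ of a connected Lie group. A classical argument (Jordan's theorem combined with a Zassenhaus/polynomial-growth style bound, using Lie algebra escape estimates) shows that the subgroup of $L_0/K$ generated by $\bar B$ is virtually nilpotent, with nilpotency step and number of generators bounded in terms of the doubling $k$ of $A_\infty$; this is where the constant $C(k)$ enters.

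Finally I would pull the structure back through $\pi$. The preimage of the virtually nilpotent Lie subgroup gives a finite-index subgroup $H_\infty' \subset \langle A_\infty \rangle$; the preimage of $K$ intersected with $H_\infty'$ is a normal subgroup $N_\infty$, and the good-model condition forces $N_\infty \subset A_\infty^4$ after possibly shrinking. The quotient $H_\infty'/N_\infty$ inherits the bounded-step nilpotent structure of the Lie group $L_0/K$. Transferring via \L o\'s's theorem yields, for ultrafilter-almost every $m$, subgroups $H_m' \subset H_m$ and finite normal $N_m \subset A_m^4$ with the required index, step, and generator bounds, contradicting the construction of the counterexamples and producing the uniform $k' = k'(k)$ and $C(k)$. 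The hard part is unquestionably the construction of the good model: combinatorial control of $A_\infty^4$ via Bogolyubov--Ruzsa and Sanders, together with the ultraproduct machinery needed to upgrade this combinatorial data into genuine local compactness of $L$, is precisely the deep content that Breuillard--Green--Tao introduced.
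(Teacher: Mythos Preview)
The paper does not prove this theorem at all: it is quoted verbatim as a black-box result from Breuillard--Green--Tao and immediately applied in the proof of Lemma~\ref{nil}. Your proposal is a reasonable high-level sketch of the actual Breuillard--Green--Tao argument (ultraproduct, Hrushovski-type good model, Gleason--Yamabe, transfer back via \L o\'s), so there is no discrepancy in approach to discuss---the paper simply cites the result, while you outline its proof.

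One small caution on your sketch: the step ``the subgroup of $L_0/K$ generated by $\bar B$ is virtually nilpotent'' is not quite via Jordan's theorem alone; in Breuillard--Green--Tao the key input at that stage is a Gleason-type escape lemma for approximate subgroups of Lie groups, combined with an induction on the dimension of the Lie model, rather than a direct appeal to compactness of $\bar B$. Also, finiteness of $N_m$ does not come automatically from $N_\infty \subset A_\infty^4$ in the ultralimit; it uses that $A_m$ is finite in each coordinate. These are refinements rather than gaps, but worth noting if you intend to expand the sketch.
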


We shall prove Lemma \ref{nil} by some sublemmas. On $(Y,p,G)$, for any $r>0$ and a subset $Y' \subset Y$, define
$$G(Y',r)= \{ g \in G | d(gx,x) \le r,  \forall x \in Y' \}.$$
 
\begin{lemma}\label{Lemma1}
There exists $r,\epsilon > 0$, depending on the limit $(Y,p,G)$, so that \\
(i) $G(B_{r}(p),10\epsilon)$ is contained in $G_0$;\\
(ii) $G(B_{r}(p),10\epsilon)$ contains no non-trivial subgroup;\\
(iii) there is an closed set $U$ in the Lie algebra of $G_0$ so that the exponential map from $U$ to $G(B_{r}(p),10\epsilon)$ is diffeomorphic.  
\end{lemma}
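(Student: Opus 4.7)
The plan is to arrange $r,\epsilon$ so that $G(B_r(p),10\epsilon)$ sits inside an arbitrarily small neighborhood of the identity in the Lie group $G$, and then read off each of (i)--(iii) from a corresponding standard property of small neighborhoods of $e$ in Lie groups.

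First I would record that, because $G\subset\mathrm{Iso}(Y)$ carries the compact-open topology and $\overline{B_r(p)}$ is compact in the proper space $Y$, the sets $G(B_r(p),10\epsilon)$ form a shrinking family of closed neighborhoods of $e$ as $r$ is enlarged and $\epsilon$ decreased; by Arzel\`a--Ascoli each is in fact compact. So the task reduces to choosing $r,\epsilon$ small enough that $G(B_r(p),10\epsilon)$ is contained in a prescribed small neighborhood of $e$.

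For (i), I would use that the identity component $G_0$ of a Lie group is open, pick an open neighborhood $W$ of $e$ with $W\subset G_0$, and shrink $r,\epsilon$ so that $G(B_r(p),10\epsilon)\subset W$. For (ii), fix an open convex symmetric neighborhood $V_0$ of $0\in\mathfrak{g}_0$ on which $\exp$ is a diffeomorphism and such that $V_0+V_0$ still lies in the injectivity region of $\exp$; set $V_1=\tfrac12 V_0$. A standard argument then shows $\exp(V_1)$ contains no nontrivial subgroup: if $h=\exp(\xi_0)\in\exp(V_1)\setminus\{e\}$, the one-parameter subgroup gives $h^n=\exp(n\xi_0)$, and the smallest $n$ with $n\xi_0\notin V_1$ satisfies $n\xi_0\in V_0+V_0$, so injectivity on $V_0+V_0$ forces $h^n\notin\exp(V_1)$. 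Shrinking $r,\epsilon$ further so that $G(B_r(p),10\epsilon)\subset\exp(V_1)$ gives (ii).

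For (iii), with $r,\epsilon$ as above, set $U:=\exp^{-1}(G(B_r(p),10\epsilon))\cap V_0$. Since $G(B_r(p),10\epsilon)\subset\exp(V_0)$ and $\exp\colon V_0\to\exp(V_0)$ is a diffeomorphism, $U$ is compact (hence closed) in $\mathfrak{g}_0$ and $\exp$ restricts to a diffeomorphism $U\to G(B_r(p),10\epsilon)$. The only mild subtlety is choosing $r$ and $\epsilon$ small enough to satisfy all three requirements simultaneously, which is automatic because each requirement is monotone in the size of the chosen neighborhood of $e$.
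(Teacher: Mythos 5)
Your overall strategy coincides with the paper's: fix a neighborhood of $e$ in the Lie group $G$ enjoying the three standard properties (contained in $G_0$, no small subgroups, diffeomorphic image of $\exp$), then arrange for $G(B_r(p),10\epsilon)$ to sit inside it; your expanded proofs of the Lie-theoretic facts, which the paper simply cites as standard, are fine. The problem is the direction in which you tune $r$. In your first paragraph you correctly record that $G(B_r(p),10\epsilon)$ shrinks as $r$ is \emph{enlarged} and $\epsilon$ decreased, but you then say three times ``shrink $r,\epsilon$'' / ``choosing $r$ and $\epsilon$ small enough''. Taken literally this fails: any $g$ in the isotropy group $G_p$ satisfies $d(gx,x)\le d(gx,gp)+d(p,x)\le 2r$ for all $x\in B_r(p)$, so as soon as $2r\le 10\epsilon$ one has $G_p\subset G(B_r(p),10\epsilon)$; whenever $G_p$ is a nontrivial compact group (which is typical for the limits in this paper) this destroys (ii), and it destroys (i) as well if $G_p\not\subset G_0$. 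The correct tuning is $r$ large and $\epsilon$ small, exactly as in the paper, which takes $r=i\to\infty$ and displacement bound $1/i\to 0$.

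Separately, the one step carrying the actual content of the lemma --- that for suitable $r$ large and $\epsilon$ small the set $G(B_r(p),10\epsilon)$ is contained in a prescribed neighborhood $W$ of $e$ --- is asserted in your proposal rather than proved. It is true, and either of two routine arguments closes it: (a) the paper's diagonal argument, namely if $g_i\in G(B_i(p),1/i)\setminus W$ for all $i$, then $g_i\to e$ uniformly on compact subsets of $Y$, hence in the compact-open topology, contradicting that $W$ is an open neighborhood of $e$; or (b) your Arzel\`a--Ascoli observation upgraded to a nested-compacta argument: for fixed $r_0,\epsilon_0$ the sets $G(B_r(p),10\epsilon)$ with $r\ge r_0$, $\epsilon\le\epsilon_0$ are compact, decreasing, and intersect in $\{e\}$ since the action is effective, so by the finite intersection property one of them already lies in $W$. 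With the monotonicity direction corrected and this containment step filled in, your argument becomes essentially the paper's proof.
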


\begin{proof}
Since $G$ is a Lie group, we may find a neighborhood $V \subset G_0$ of the identity so that the statement holds for $V$. Then we need to show $G(B_{r}(p),10\epsilon) \subset V$ for some $r,\epsilon$.

Assume that we can find $g_i \in G(B_{i}(p),1/i)$ while $g_i \notin V$. Then $g_i$ converges to the identity $e \in G$ but $g_i$ is not contained in an open neighborhood of the identity map $e$, a contradiction. Here we use the fact $g_i$ converges to $e$ with respect to compact-open topology if and only if $g_i$ uniformly converges to $e$ on each compact subset of $Y$. 
\end{proof}
 
Choose $\epsilon_i \to 0$ so that the eGH distance between $(X_i,p_i,G_i)$ and $(Y,p,G)$ is less than $\epsilon_i$.  
\begin{lemma}\label{lemma_2}[Gap lemma]
Given $r,\epsilon$ in Lemma \ref{Lemma1}, there exists $\epsilon_i' \to 0$ so that for all large $i$,
$$\langle G_i(B_r(p_i),\epsilon_i') \rangle=\langle G_i(B_r(p_i),\epsilon) \rangle.$$
\end{lemma}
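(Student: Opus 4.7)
The plan is a proof by contradiction: take an element $g_i \in G_i$ that moves $B_r(p_i)$ by at most $\epsilon$ but fails to be generated by shorter elements, pass to a limit $g \in G_0$, split $g$ into many tiny exponential factors using the Lie structure in Lemma \ref{Lemma1}(iii), and lift each factor back into $G_i$ via the equivariant Gromov--Hausdorff approximation. A diagonal reduction over $\alpha = 1/k$ makes it enough to prove the following fixed-$\alpha$ statement: for every $\alpha \in (0, \epsilon)$,
$$\langle G_i(B_r(p_i), \alpha) \rangle = \langle G_i(B_r(p_i), \epsilon) \rangle$$
for all sufficiently large $i$.

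Suppose this fails. After passing to a subsequence, there exist $\alpha > 0$ and $g_i \in G_i(B_r(p_i), \epsilon) \setminus \langle G_i(B_r(p_i), \alpha) \rangle$. Since $d(g_i p_i, p_i) \le \epsilon$, a further subsequence yields $g_i \to g \in G(B_r(p), \epsilon)$, and Lemma \ref{Lemma1}(i) forces $g \in G_0$. By Lemma \ref{Lemma1}(iii) I write $g = \exp(X)$ and choose $N$, depending only on $X$ and $\alpha$, so that $\exp(X/N)$ displaces every point of $B_{r+1}(p)$ by at most $\alpha/(10N)$. Using the eGH approximation, pick $h_i \in G_i$ with $h_i \to \exp(X/N)$; for $i$ large, $h_i$ displaces every point of $B_{r+1}(p_i)$ by at most $\alpha/(5N)$. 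An inductive orbit-tracking estimate confines $\{h_i^k y : 0 \le k \le N\}$ to $B_{r+1}(p_i)$ for every $y \in B_r(p_i)$ and yields $d(h_i^N y, y) \le \alpha/5$, so both $h_i$ and $h_i^N$ lie in $G_i(B_r(p_i), \alpha)$. Since $h_i^N \to \exp(X/N)^N = g$ in the compact-open topology and $g_i \to g$, the element $s_i := g_i (h_i^N)^{-1}$ tends to the identity and therefore lies in $G_i(B_r(p_i), \alpha)$ for $i$ large. Then $g_i = s_i \cdot h_i^N \in \langle G_i(B_r(p_i), \alpha) \rangle$, contradicting the choice of $g_i$.

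The main technical obstacle is controlling the $N$-fold iterate $h_i^N$: taking powers could in principle amplify displacements unboundedly, but by fixing $N$ as a function of $\alpha$ and $|X|$ only (independent of $i$), the approximation error of order $\epsilon_i$ per factor accumulates to at most $N\epsilon_i \to 0$, and the orbit-containment estimate is routine once the approximation is sufficiently accurate on the enlarged ball $B_{r+1}(p_i)$. The rest is bookkeeping via the exponential diffeomorphism of Lemma \ref{Lemma1}(iii); no use of discreteness of $G_i$ is required at this step, which is consistent with the Lie-theoretic nature of the statement.
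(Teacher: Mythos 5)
Your proposal is correct and follows essentially the same route as the paper: reduce to a fixed threshold $\alpha<\epsilon$ (the paper's $\delta$), argue by contradiction with $g_i \to g \in G(B_r(p),\epsilon)$, write $g=\exp(v)$ via Lemma \ref{Lemma1}(iii), take an $N$-th root $h=\exp(v/N)$ with small displacement, lift it to $h_i \in G_i(B_r(p_i),\alpha)$, and conclude that $g_i(h_i^N)^{-1}$ is near the identity, hence $g_i \in \langle G_i(B_r(p_i),\alpha)\rangle$. Your explicit orbit-tracking control of the iterates $h_i^k$ on the enlarged ball is a detail the paper leaves implicit, but it does not change the argument.
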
 
\begin{proof}
It suffices to show that for any fixed $\delta<\epsilon$ and sufficiently large $i$,
$$G_i(B_r(p_i),\epsilon) \subset \langle G_i(B_r(p_i),\delta) \rangle.$$

Assume, for contradiction, that there exists a sequence 
$g_i \in G_i(B_r(p_i),\epsilon)$ such that $g_i \notin \langle G_i(B_r(p_i),\delta) \rangle$.
Passing to subsequence if necessary, $g_i$ converges to $g \in  G(B_{r}(p),\epsilon)$. By lemma \ref{Lemma1}, we may find $v \in U$ so that $\mathrm{exp}(v)=g$. 

Let $N$ be a large integer so that 
$h=\mathrm{exp}(v/N) \subset G(B_{r}(p),\delta/2)$. Then $h^N=g$. Choose $h_i \in G_i$ converging to $h$, then for large enough $i$, $h_i \in G_i(B_{r}(p_i),\delta)$
and $h_i^N$ is $N\epsilon_i$-close to $g_i$. Thus $g_i^{-1}h_i^N$ is $N\epsilon_i$-close to $e \in G_i$. Therefore $g_i^{-1}h_i^N \subset G(B_{r}(p_i),\delta)$. Thus $g_i \subset \langle G(B_{r}(p_i),\delta) \rangle$, a contradiction.
\end{proof}

In Lemma \ref{lemma_2}, we may assume $\epsilon_i'=\epsilon_i$.
Passing to a subsequence, suppose that $\langle G_i(B_r(p_i),\epsilon) \rangle$ converges a subgroup $G_{\epsilon}$ in $G$. Since $G_{\epsilon}$ contains a $\epsilon$-neighborhood of the $e \in G$, thus $G_0 \subset G_{\epsilon}$.

\begin{proof}[Proof of Lemma \ref{nil}]
Since we only concerned with $G_0$, we may assume that $G=G_{\epsilon}$ and $G_i$ can be generated by $G_i(B_r(p_i),\epsilon)$ or $G_i(B_r(p_i),\epsilon_i)$.
We always assume $\epsilon > 100\epsilon_i$.

Choose a fixed integer $k$ such that $G(B_{r}(p),4\epsilon)$ can be covered by $k$ left translates of $G(B_{r}(p),\epsilon/2)$. We shall show that $(G_i(B_r(p_i),\epsilon))^2$ can be covered by $k$ left translates of $G_i(B_r(p_i),\epsilon)$ for all $i$ large enough. 

By assumption we can find $g_j \in G(B_{r}(p),10\epsilon)$ so that $$\cup_{1 \le j \le k} g_jG(B_{r}(p),\epsilon/2)$$ contains $G(B_{r}(p),4\epsilon)$. Choose $g_j' \in  G_i(B_{r}(p),11\epsilon)$. We shall show that $$\cup_{1 \le j \le k} g_j'G_i(B_r(p_i),\epsilon)$$ contains $(G_i(B_r(p_i),\epsilon))^2$.

Choose any $h_i \in  (G_i(B_r(p_i),\epsilon))^2 \subset G_i(B_{r}(p_i),3\epsilon)$, we can find $h \in G(B_{r}(p),4\epsilon)$ which is $\epsilon_i$ close to $h_i$. Since $h=g_jl$ for some $1 \le j \le k$ and $l \in G(B_{r}(p),\epsilon/2)$. We may find $l_i \in G(B_{r}(p_i),2\epsilon/3)$ which is $\epsilon_i$ close to $l$. Then $h_i$ is $10\epsilon_i$ close $g_j'l_i$. Thus we can find $t \in G(B_{r}(p_i),10\epsilon_i)$ so that $h_i=g_j'l_it$. On the other hand, $l_it \in G_i(B_r(p_i),\epsilon)$. The claim is proved.

Define $A_i=G_i(B_r(p_i),\epsilon)$ and $S_i= G_i(B_r(p_i),\epsilon_i)$, we just proved that $A_i$ is $k$-approximate group for sufficiently large $i$. We can apply Theorem \ref{Theorem_group} to $A_i$ and $S_i$. In Theorem \ref{Theorem_group} we shall choose a normal subgroup $G_i' \subset G_i$ of index at most $C(k)$; we may assume $G_i'=G_i$, since the limit of $G_i'$ is a normal subgroup of $G$ of index at most $C(k)$, thus contains $G_0$. 
There is a normal subgroup $N_i$ of $G_i$ and $N_i \subset A_i^4$. $G_i/N_i$ is nilpotent of step at most $C(k)$. Passing to a subsequence if necessary, $N_i$ converges to a subgroup $N \subset G$. Moreover, $N \subset G(B_{r}(p),10\epsilon)$, thus $N$ is trivial by Lemma \ref{Lemma1}. Consider the equivariant convergence
\begin{center}
		$\begin{CD}
			(X_i,p_i,G_i,N_i) @>eGH>> (Y,p,G,\mathrm{Id})\\
			@VV\pi V @VV\pi V\\
			(X_i/N_i,p_i,G_i/N_i) @>eGH>> (Y,p,G).
		\end{CD}$
\end{center}
Since $G_i/N_i$ is nilpotent with the step at most $C(k)$ and $G_i/N_i$ converges to $G$, $G_0$ must be nilpotent.
\end{proof}

Since $G_0$ is a nilpotent Lie group, it has a maximal compact subgroup $T^k$, which is a torus contained in the center.  
\begin{corollary}\label{homotopy}
In the setting of the Main Theorem, $X/T^k$ is simply connected.
\end{corollary}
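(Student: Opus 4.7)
The plan is to realize $X/T^k$ as the total space of a fiber bundle over $X/G_0$ with contractible fiber $G_0/T^k \cong \mathbb{R}^l$; simple connectivity will then fall out of Corollary~\ref{sim} and the long exact sequence of a fibration. Since $T^k$ lies in the center of the nilpotent Lie group $G_0$ (by the corollary following Lemma~\ref{G_0}), it is normal in $G_0$, and the isometric $G_0$-action on $X$ descends to an isometric action of $G_0/T^k \cong \mathbb{R}^l$ on $X/T^k$ (cf.\ Lemma~\ref{quotient}), with further quotient equal to $X/G_0$.

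The key step is to show that this descended action is free. Given $\bar{x} \in X/T^k$ with any lift $x \in X$, an element $gT^k$ fixes $\bar{x}$ exactly when $gx = tx$ for some $t \in T^k$, equivalently $g \in T^k \cdot (G_0)_x$; so the stabilizer of $\bar{x}$ in $G_0/T^k$ equals $T^k \cdot (G_0)_x / T^k$. The isotropy $(G_0)_x$ is compact because $G_0 \subset \mathrm{Iso}(X)$ is closed and acts isometrically on the proper space $X$, so Theorem~\ref{topo_slice} applies. Since $G_0$ is nilpotent, any compact subgroup $K \subset G_0$ is central (Lemma~\ref{G_0}), and then $T^k K$ is a compact subgroup properly containing $T^k$ unless $K \subset T^k$, which forces $K \subset T^k$ by maximality. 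Hence $(G_0)_x \subset T^k$, the stabilizer is trivial, and the $G_0/T^k$-action is free. Properness of this action is inherited from the proper $G_0$-action on $X$ by quotienting out the compact normal subgroup $T^k$.

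Finally, Palais's slice theorem for free proper Lie group actions upgrades the topological slices provided by Theorem~\ref{topo_slice} to honest local trivializations, so that the projection $\pi : X/T^k \to X/G_0$ is a principal $(G_0/T^k)$-bundle with contractible fiber $\mathbb{R}^l$. The portion
\[
\pi_1(\mathbb{R}^l) \longrightarrow \pi_1(X/T^k) \longrightarrow \pi_1(X/G_0) \longrightarrow \pi_0(\mathbb{R}^l)
\]
of its long exact homotopy sequence, together with contractibility of the fiber and $\pi_1(X/G_0) = 0$ from Corollary~\ref{sim}, yields $\pi_1(X/T^k) \cong \pi_1(X/G_0) = 0$. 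The main delicate point I expect is the Palais bundle step: one must check that on a slice $S$ at a lift $x$ the induced map $(G_0/T^k) \times \pi(S)/\!\sim\, \to X/T^k$ is a homeomorphism onto a saturated open set, which uses freeness of the $G_0/T^k$-action in an essential way and is precisely where compactness of $(G_0)_x$ and the nilpotence of $G_0$ enter the argument.
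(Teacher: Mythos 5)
Your proposal is correct and follows essentially the same route as the paper: both show the induced $G_0/T^k \cong \mathbb{R}^l$ action on $X/T^k$ is free (since isotropy groups are compact while $G_0/T^k$ has no nontrivial compact subgroups), then apply the homotopy exact sequence of the resulting fibration $G_0/T^k \to X/T^k \to X/G_0$ together with the simple connectivity of $X/G_0$ from Corollary~\ref{sim}. Your write-up merely supplies more detail than the paper at two points — the explicit computation of the stabilizer as $T^k(G_0)_x/T^k$ and the Palais slice argument upgrading the quotient map to a principal bundle — but the underlying argument is identical.
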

\begin{proof}
We have shown that $X/G_0$ is simply connected. Recall that $G_0/T^k$ is diffeomorphic to some $\mathbb{R}^l$. Therefore $G_0/T^k$ acts freely on $X/T^k$ since any isotropy subgroup is compact.Then there is a fibration map
$$G_0/T^k \to X/T^k \to X/G_0.$$
By the exact sequence of homotopy groups, $X/T^k$ is simply connected since $X/G_0$ is simply connected and $G_0/T^k$ is Euclidean.

\end{proof}

\section{Isometric circle actions}
We can prove the Main Theorem using Corollary \ref{homotopy} and the following induction lemma for isometric circle actions $T^1$ on a proper geodesic metric space $(Y,p)$. We always assume that the $T^1$ actions are effective. We denote circle actions by $T^1$ instead of $S^1$ to distinguish them from the notation of a slice $S$. 

We say that a loop $\beta$ is contained in the $T^1$-orbit if the image of $\beta$ is contained in the $T^1$-orbit of $\beta(0)$. A loop $\alpha^{-1} \cdot \beta \cdot \alpha$ at $p$ is contained in the $T^1$-orbit up to conjugation if $\beta$ is a loop contained in the $T^1$-orbit and $\alpha$ is a path from $\beta(0)$ to $p$.

\begin{lemma}\label{circle}
Let $(Y,p)$ be a proper geodesic metric space. Suppose that $Y$ is semi-locally simply connected. Assume that there exist closed isometric circle $T^1$ actions on $Y$ with the quotient space $(Y/T^1,\bar{p})$. Then there is a natural surjective homomorphism
$$ \pi : \pi_1(Y,p) \to \pi_1(Y/T^1,\bar{p})$$
so that the kernel is generated by loops in $T^1$-orbits up to conjugation.  
\end{lemma}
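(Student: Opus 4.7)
The plan is to verify the three components of the lemma: that $\pi$ is a well-defined homomorphism, that it is surjective, and that its kernel equals the subgroup $L\subseteq\pi_1(Y,p)$ generated by loops of the form $\alpha^{-1}\cdot\beta\cdot\alpha$ with $\beta$ contained in a $T^1$-orbit and $\alpha$ a path to $p$. Well-definedness and the inclusion $L\subseteq\ker\pi$ are immediate: the quotient map $Y\to Y/T^1$ is continuous, and every orbit loop projects to a constant (and conjugation is preserved by the induced homomorphism).

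For surjectivity, I use the path-lifting property afforded by the slice theorem (Theorem \ref{topo_slice}). Given a loop $\bar\gamma$ at $\bar p$, I lift it to a path $\tilde\gamma$ in $Y$ starting at $p$; its endpoint is $g\cdot p$ for some $g\in T^1$. Path-connectedness of $T^1$ yields a path $h:[0,1]\to T^1$ from the identity to $g$, so $\sigma(t):=h(t)\cdot p$ is a path in the orbit $T^1\cdot p$ from $p$ to $gp$. Then $\tilde\gamma\cdot\sigma^{-1}$ is a loop at $p$ whose image under $\pi$ is freely homotopic to $\bar\gamma$, since $\sigma$ projects to the constant loop at $\bar p$.

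For $\ker\pi\subseteq L$, I take a loop $\eta$ at $p$ with a null-homotopy $H:[0,1]^2\to Y/T^1$ of $\pi\circ\eta$, normalized to be constant $\bar p$ on the top and vertical sides. By compactness and the slice theorem, I subdivide the square into a fine grid of small squares $\{Q_{ij}\}$ so that each $H(Q_{ij})$ lies in $\pi(S_{ij})$ for a contractible slice $S_{ij}$ at some point $y_{ij}\in Y$. I then process the squares inductively: over each $Q_{ij}$, I lift $H|_{Q_{ij}}$ into $S_{ij}$ using the local section furnished by the slice, and observe that the resulting boundary loop differs from the previously constructed lift of the 1-skeleton by a loop that is homotopic, within the neighborhood $\pi^{-1}(\pi(S_{ij}))\cong T^1\times_{T^1_{y_{ij}}}S_{ij}$, to an orbit loop based at $y_{ij}$. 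Conjugating this orbit loop by a chosen path from $y_{ij}$ to $p$ produces an element of $L$, and combining all such contributions across the grid shows that $[\eta]$ lies in $L$.

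The main obstacle is the combinatorial bookkeeping in the subdivision argument: patching local lifts across adjacent slices, verifying that each local discrepancy is genuinely an orbit loop rather than a more complicated loop, and tracking how the individual conjugated orbit loops combine into a product that equals $[\eta]$ in $\pi_1(Y,p)$. The decisive ingredient is the homotopy equivalence $\pi^{-1}(\pi(S_{ij}))\simeq T^1/T^1_{y_{ij}}$ (using that $S_{ij}$ is contractible), whose fundamental group is either trivial or infinite cyclic and generated by an orbit loop; this forces every local obstruction to be an orbit loop at $y_{ij}$, exactly as required. The "up to conjugation" phrase in the statement reflects precisely the need to transport these orbit loops at varying basepoints $y_{ij}$ back to the basepoint $p$.
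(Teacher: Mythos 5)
Your well-definedness, the inclusion $L\subseteq\ker\pi$, and the surjectivity argument are all fine (surjectivity is essentially the paper's own moving-basepoint trick; note your construction in fact gives a \emph{based} homotopy of projections, since $\sigma$ projects to the constant path, which is what surjectivity actually requires — ``freely homotopic'' would not suffice). The genuine gap is in the direction $\ker\pi\subseteq L$, and it occurs at two linked places. First, you assume contractible slices $S_{ij}$. Palais's theorem (Theorem \ref{topo_slice}) produces slices with no topological control whatsoever, and semi-local simple connectedness of $Y$ only says small loops are contractible \emph{in $Y$} — it does not make any neighborhood, let alone a slice, contractible. Your decisive step, the homotopy equivalence $\pi^{-1}(\pi(S_{ij}))\simeq T^1/T^1_{y_{ij}}$ forcing every local obstruction to be an orbit loop, rests entirely on this unavailable hypothesis. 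Second, and more fundamentally, the slice theorem furnishes no ``local section'': it identifies $\pi(S)$ with $S/T^1_y$, and when the (finite cyclic) isotropy $T^1_y$ acts nontrivially on $S$ there is in general no continuous section of $S\to S/T^1_y$ and no lift of two-parameter families through the orbit map. For example, if $S$ is a disk and $T^1_y=\mathbb{Z}_2$ acts by rotation through $\pi$, lifting the identity map of $S/\mathbb{Z}_2$ through $z\mapsto z^2$ would require a continuous global branch of the square root. The orbit map is not a fibration, so your square-by-square lifting of $H|_{Q_{ij}}$ is precisely the step that can fail, and it fails exactly at isotropy points — which is where all the difficulty of this lemma is concentrated.

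The paper avoids two-cell lifting altogether. It lifts only the \emph{level loops} $H(\cdot,s_l)$ of the null-homotopy as based loops in $Y$ (path lifting does hold, via slices), and then compares consecutive lifts using the technical Lemma \ref{move}: two loops in $B_R(p)$ whose projections are pointwise $\delta_0$-close differ, up to homotopy, by conjugated orbit loops. The proof of Lemma \ref{move} constructs a continuous rotation function $g(t)\in\mathbb{R}$ with $[g(t)]c_2(t)$ uniformly close to $c_1(t)$; the obstructions to choosing $g$ continuously (and to killing its winding at $t=1$) are absorbed by splicing orbit loops into $c_2$ at nearby \emph{isotropy points}, whose existence near almost-fixed points is extracted by a compactness argument, with all small modifications justified by contractibility of small loops in $Y$. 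Your outline would go through in settings where linear, hence equivariantly contractible, slices exist — smooth or locally smooth compact transformation groups in the style of Bredon — but in the metric generality of this lemma the isotropy locus must be handled as Lemma \ref{move} does, and as written your kernel inclusion has a genuine gap.
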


The proof of Lemma \ref{circle} relies on  a technical lemma, which we will prove later.
\begin{lemma}\label{move}
Take the same assumption in Lemma \ref{circle}. For any $R>0$, there exists $\delta_0>0$ so that, for any two loops $c_1, c_2 \subset B_R(p)$ based at $p$ and $d(\pi(c_1(t)),\pi(c_2(t)) \le \delta_0$ for any $t\in [0,1]$ and $\pi:Y \to Y/T^1$. Then $c_1 \cdot c_2^{-1}$ is homotopic to some loops in the $T^1$-orbits up to conjugation.
\end{lemma}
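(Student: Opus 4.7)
\noindent The plan is to reduce the problem to a homotopy-lifting question for the quotient map $\pi : Y \to Y/T^1$. I will first construct a homotopy $H : [0,1]^2 \to Y/T^1$ from $\pi \circ c_1$ to $\pi \circ c_2$ rel endpoints---that is, with $H(s,0) = H(s,1) = \pi(p)$---and then lift $H$ to $\tilde H : [0,1]^2 \to Y$ satisfying $\tilde H(0,\cdot) = c_1$ and $\tilde H(1,\cdot) = c_2$. The horizontal edges $\beta_0 := \tilde H(\cdot,0)$ and $\beta_1 := \tilde H(\cdot,1)$ are forced to lift the constant path at $\pi(p)$, so they lie entirely in the fiber $\pi^{-1}(\pi(p)) = T^1 \cdot p$, and since they start and end at $p$ they are loops at $p$ contained in the $T^1$-orbit of $p$. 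Reading off the boundary of the square gives $c_1 \cdot \beta_1 \cdot c_2^{-1} \cdot \beta_0^{-1} \simeq e$, which rearranges to
$$ c_1 \cdot c_2^{-1} \;\simeq\; \beta_0 \cdot \bigl(c_2 \cdot \beta_1^{-1} \cdot c_2^{-1}\bigr), $$
a concatenation of two loops each of the form $\alpha^{-1} \cdot \beta \cdot \alpha$ with $\beta$ a loop in a $T^1$-orbit.

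To construct $H$, I will apply the slice theorem together with Lemma \ref{quotient slice} to cover the compact set $\pi(\overline{B}_R(p))$ by finitely many open sets $U_j = \pi(S_j)$ that are simply connected in $Y/T^1$ (after shrinking each slice $S_j$). Taking $\delta_0$ to be (a fraction of) a Lebesgue number of this cover, the closeness hypothesis together with uniform continuity of $\pi \circ c_i$ yields a subdivision $0 = t_0 < \cdots < t_n = 1$ such that for each $k$, both $\pi(c_1|_{[t_k, t_{k+1}]})$ and $\pi(c_2|_{[t_k, t_{k+1}]})$ sit inside a common $U_{j_k}$. Short connecting arcs from $\pi(c_1(t_k))$ to $\pi(c_2(t_k))$ chosen in $U_{j_k} \cap U_{j_{k-1}}$ (the constant arc at $\pi(p)$ when $k=0$ or $n$), together with simple connectivity of each $U_{j_k}$, let me fill each small square and concatenate into the global $H$.

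To lift $H$, I will work strip by strip over $[t_k, t_{k+1}]$, exploiting the slice identification $T^1 \cdot S_{j_k} \cong T^1 \times_{T^1_{y_k}} S_{j_k}$. Once the left vertical edge of the $k$-th strip has already been lifted (inherited from the previous strip, or from $c_1$ when $k = 0$), the fiberwise $T^1$-action on the tube lets me extend the lift over the whole strip so that the right vertical edge matches $c_2|_{[t_k, t_{k+1}]}$; the two new horizontal edges of the strip are paths in the $T^1$-orbits of $c_1(t_k)$ and $c_1(t_{k+1})$ respectively. Stitching the strips gives $\tilde H$ with the desired boundary data.

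The hardest step will be the lifting in the presence of non-trivial isotropy $T^1_{y_k}$: when $T^1_{y_k}$ is a finite cyclic subgroup (or all of $T^1$), the tube $T^1 \cdot S_{j_k}$ is a twisted bundle rather than a trivial principal bundle, and lifts have genuine ambiguity. The fix should be to absorb this ambiguity into the horizontal boundary of each strip, which lies in a single $T^1$-orbit by design; because $T^1_{y_k}$ is a compact subgroup of the abelian group $T^1$, compatibility of the successive lift choices across the shared vertical edges can be arranged inductively. A secondary point to verify is that the horizontal boundaries at the global corners $(0,0),(1,0),(0,1),(1,1)$ all return to $p$, so that $\beta_0$ and $\beta_1$ are genuine loops at $p$; this should follow from having used the constant arc at $\pi(p)$ as the connecting data at $k=0$ and $k=n$ when constructing $H$.
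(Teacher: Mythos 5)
Your reduction to a homotopy-lifting problem has a genuine gap, and in fact the conclusion your square would yield is false in general. The quotient map $\pi : Y \to Y/T^1$ is not a fibration when the action has isotropy: path lifting holds (via slices), but lifting a map of the square with \emph{both} vertical edges prescribed ($\tilde H(0,\cdot)=c_1$, $\tilde H(1,\cdot)=c_2$) is exactly the step you defer (``the fix should be to absorb this ambiguity\dots''), and it is not fixable. If it were, your boundary reading would show that $c_1 \cdot c_2^{-1}$ is a product of conjugates of loops in the \emph{single} orbit $T^1\cdot p$, which is strictly stronger than the lemma and wrong. Concretely, let $Y=\mathbb{R}P^2$ with the round metric and the rotational $T^1$-action, and take $p$ to be the fixed point (image of a pole), so $\pi^{-1}(\pi(p))=T^1\cdot p=\{p\}$ and your $\beta_0,\beta_1$ would be constant. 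Let $\alpha$ run from $p$ to a point $q$ on the equatorial orbit $\mathbb{R}P^1$ (isotropy $\mathbb{Z}/2$), let $c_1=\alpha^{-1}\cdot\beta\cdot\alpha$ with $\beta$ traversing that orbit once (the generator of $\pi_1(\mathbb{R}P^2)$), and let $c_2$ be the same with $\beta$ traversed twice (null-homotopic). Since the whole orbit projects to one point of $Y/T^1$, after reparametrizing we get $\pi(c_1(t))=\pi(c_2(t))$ for all $t$, so the hypothesis holds for every $\delta_0>0$; yet $[c_1\cdot c_2^{-1}]$ is the non-trivial class, which is not a product of loops at the point $p$. So no amount of care with the twisted tubes $T^1\times_{T^1_{y_k}}S_{j_k}$ can produce the lift you want; the obstruction sits precisely on the strips crossing exceptional isotropy. (A smaller independent issue: Lemma \ref{quotient slice} makes loops in $\pi(S_j)$ contractible in $Y/T^1$, not inside $U_j=\pi(S_j)$, so your small squares need not be fillable within the tube you intend to lift over.)

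The paper's proof is shaped by exactly this phenomenon and avoids homotopy lifting altogether. It constructs a continuous real-valued ``angle'' function $g:[0,1]\to\mathbb{R}$ with $[g(t)]c_2(t)$ uniformly $3\epsilon$-close to $c_1(t)$ (closeness plus semi-local simple connectedness then gives the homotopy directly), and the key mechanism is a compactness claim: if $d(gy,y)<10\delta_0$ then some nearby point $y_0\in B_\delta(y)$ has an isotropy element $g_0\in T^1_{y_0}$ with $g_0g^{-1}$ small. Whenever the matching rotation would have to jump discontinuously, the paper glues into $c_2$ a correction loop in the orbit of such a nearby point $y_0$, and finally kills the integer discrepancy $g(1)\in\mathbb{Z}$ by an orbit loop at $p$. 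These corrections are loops in orbits of points \emph{along the curve}, conjugated back to $p$ --- which is precisely why the lemma's conclusion allows arbitrary base orbits, and precisely the feature your construction, which confines all corrections to the horizontal edges over $\pi(p)$, cannot reproduce. Any repair of your strip-by-strip lift would have to permit gluing orbit loops at interior near-isotropy points of the strips, at which point you have rederived the paper's argument.
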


\begin{proof}[Proof of Lemma \ref{circle}]
For any loop $\gamma \subset Y$ at $p$, we define $\bar{\gamma}$ be the image of $\gamma$ in $Y/T^1$. Define $\pi([\gamma])=[\bar{\gamma}] \in \pi_1(Y/T^1,\bar{p})$. Then $\pi$ is a well-defined homomorphism since the projection map from $Y$ to $Y/T^1$ is continuous.

We prove that $\pi : \pi_1(Y,p) \to \pi_1(Y/T^1,\bar{p})$ is surjective. We shall show that any loop $\bar{\gamma}$ at $\bar{p} \in Y/T^1$ can be lifted as a loop at $p \in Y$. By the path-lifting property, we can lift $\bar{\gamma}$ to a path from $p$ to another point saying $q$. Since $p$ and $q$ have the same image in $Y/T^1$, $gp=q$ for some $g \in T^1$. Let $g(t)$ be a curve in $T^1$ so that $g(0)=e$ and $g(1)=g$, then $g(t)\gamma(t)$ is loop at $p$ and $\pi([g(t)\gamma(t)])=[\bar{\gamma}]$. Therefore $\pi : \pi_1(Y,p) \to \pi_1(Y/T^1,\bar{p})$ is surjective.

Assume that $\gamma$ is a loop at $p$ and that the projection image $\bar{\gamma}$ is contractible in $Y/S^1$ by a homotopy map $H(t,s)$. We may take $R>0$ so that the pre-image of homotopy image $H$ in $Y$ lies within $B_R(p)$. By Lemma \ref{move}, we can take $\delta_0$ so that for any two loops $c_1, c_2 \subset B_R(p)$ based at $p$ with $d(\pi(c_1(t)),\pi(c_2(t)) \le \delta_0$ for any $t \in [0,1]$, then $c_1c_2^{-1}$ is homotopic to some loops contained in the $T^1$-orbits up to conjugation.

Since $H$ is continuous, we may find $0=s_0 < s_1 < ... <s_L=1$, so that for any $t \in [0,1]$ and $ 1 \le l \le L$, $d(H(t,s_l),H(t,s_{l+1})) \le \delta_0$. $\gamma$ is a lift of $H(t,0)$. For each $1 \le l \le L$, we can lift $H(t,s_l)$ as a loop $\gamma_l$ at $p \in Y$. Then $\gamma_L$ is a loop contained in the $T^1$-orbit of $p$. Use Lemma \ref{move} inductively for each $l$, then $\gamma$ is homotopic to loops contained in the $T^1$-orbits up to conjugation. 
\end{proof}

Using Lemma \ref{circle}, we can prove the Main Theorem by an induction argument.
\begin{proof}[Proof of the Main Theorem]
We have already established that $X/T^k$ is simply connected. In the torus $T^k$, we consider subgroups $T^1 \vartriangleleft T^2 ... \vartriangleleft T^k$ where $T^{j+1}/T^{j}$ is a circle for each $j$. We will use induction to show that $\pi_1(X,p)$ is generated by loops in $T^k$-orbits up to conjugation.

Take $p^j \in X/T^j$ be the image of $p$ for $1 \le j \le k$. Assume that $\pi_1(X/T^j,p^j)$ is generated by loops in $T^k/T^j$-orbits up to conjugation for some $1 \le j \le k$. we aim to show that $\pi_1(X/T^{j-1},p^{j-1})$ is generated by loops in $T^k/T^{j-1}$-orbits up to conjugation. 

Take any loop $\gamma^{j-1}$ at $p^{j-1}$ and let $\gamma^j$ be the image of $\gamma^{j-1}$ in $X/T{j}$. By assumption, $\gamma^{j}$ is homotopic to some loops in $T^k/T^{j-1}$ orbits up to conjugation, which we can write as a product $\prod_{l=1}^L \bar{\alpha}_l^{-1} \cdot \bar{\beta}_l \cdot \bar{\alpha}_l$ where each $\bar{\alpha}_l^{-1} \cdot \bar{\beta}_l \cdot \bar{\alpha}_l$ represents a loop in $T^k/T^j$-orbit up to conjugation. 

For each $l$, we can lift $\bar{\beta}_l$ to a loop $\beta$ in $Y/T^{j-1}$ which is contained in $T^k/T^{j-1}$-orbit. Then we can lift $\bar{\alpha}_l$ as a path from $\beta(0)$ to $p^{j-1}$. Then $\alpha_l^{-1}\cdot \beta_l \cdot \alpha_l$ is a loop at $p^{j-1}$, which is in $T^k/T^{j-1}$-orbit up to conjugation. Therefore $(\gamma^{j-1})^{-1} \cdot \prod_{l=1}^L \alpha_{l} \cdot \beta_{l} \cdot\alpha_{l}^{-1}$ lies in the kernel of the map
$$\pi : \pi_1 ( X/T^{j-1},p^{j-1}) \to \pi_1 (X/T^j,p^j).$$

By Lemma \ref{circle}, this loop is homotpic a loop generated by loops in $T^j/T^{j-1}$-orbits up to conjugation. Therefore $\gamma^{j-1}$ can be generated by loops in the $T^k/T^{j-1}$-orbit up to conjugation as well.
By induction, $\pi_1(X,p)$ is generated by loops in $T^k$-orbits up to conjugation.
\end{proof}

Now we prove the technical lemma \ref{move}.

Let $c:[0,1]$ be a curve in $Y$ and $t_0 \in (0,1)$. We simply use the notation $c([0,t_0])$ to represent the restriction of $c$ on $[0,t_0]$. For any loop $c'$ contained in the $T^1$-orbit $c(t_0)$, we define a new a curve 
$$c''=c([0,t_0])\cdot c' \cdot c([t_0,1])= c([0,t_0]) \cdot c'\cdot (c([0,t_0]))^{-1} \cdot c.$$ We may reparametrize $c''$ so that the domain of $c''$ is still $[0,1]$.
We say that we glue a loop $c'$ at $c(t_0)$ to $c$ and get $c''$. Then $c''\cdot c^{-1}= c([0,t_0]) \cdot c' \cdot (c([0,t_0]))^{-1}$ is a loop in the $T^1$-orbit up to conjugation.

\begin{proof}[Proof of Lemma \ref{move}]
We identity $T^1 = \mathbb{R}/\mathbb{Z}=[-\frac{1}{2},\frac{1}{2})$. Let $\epsilon > 0$ so that for any $y \in B_R(p)$, any loop in $B_{3\epsilon}(y)$ is contractible in $Y$. 
Since $T^1$ actions are isometric thus uniformly continuous on $B_R(p)$, there exists $0<\delta<\frac{\epsilon}{2}$ such that for any $g_1,g_2 \in T^1$ and $y_1,y_2 \in B_R(p)$ with $g_1g_2^{-1} \in (-\delta,\delta)$ and $d(y_1,y_2) < \delta$, then we have $d(g_1y_1,g_2y_2) < \epsilon$. 

We claim that there exists $\delta_0$ so that for any $g \in T^1$ and $y \in B_R(p)$, if $d(gy,y)<10\delta_0$ then there exists $y_0 \in B_{\delta}(y)$ and $g_0 \in T^1_{y_0}$ so that $g_0g^{-1} \in (-\delta,\delta)$. Otherwise we assume that $d(g_iy_i,y_i) < \frac{1}{i}$ while there is no $y_0$ or $g_0$ satisfying the claim. Passing to a subsequence if necessary, $y_i$ converges to $y$ and $g_i$ converges to $g$, $gy=y$ and $g{-1}g_i \in (-\delta,\delta)$ for sufficiently large $i$, a contradiction.

We shall show that $\delta_0$ defined above works for the lemma. Assume that $d(\pi(c_1(t)),\pi(c_2(t)) \le \delta_0$ for any $t\in [0,1]$. Now we can take $0=t_0 < t_1 < ...< t_L=1$ so that for any $1 \le l \le L$, the image of $c_1$ (and also $c_2$) on each subinterval $[t_{l-1},t_l]$ is contained in a $\delta_0$-ball. 

For each $0 \le l \le L$, since $d(\pi(c_1(t_l)),\pi(c_2(t_l)) \le \delta_0$, we can find $g_l \in T^1$ so that $d(c_1(t_l),g_lc_2(t_l))<\delta_0$. We can take $g_0=g_L=0 \in T^1 = \mathbb{R}/\mathbb{Z}$ since $p=c_1(0)=c_2(0)=c_1(1)=c_2(1)$. 

We want to show that, by modifying $c_2$ in some small balls and gluing some loops in the $T^1$-orbits to $c_2$ if necessary, we can find a continuous function $g:[0,1] \to \mathbb{R}$ so that $g(0)=g(1)=0$, and for the quotient image $[g(t)] \in \mathbb{R}/\mathbb{Z} =T^1$, we have $d(c_1(t),[g(t)]c_2(t)) \le \epsilon$. If such $g$ exists, we can define $H(s,t)=[g(st)]c_2$, $H(0,t)=c_2(t)$ and $H(1,t)$ is point-wise $3\epsilon$-close to $c_1$, thus  homotopic to $c_1$. Then we finish the proof by constructing such a function $g$.

We first define $g$ on $[0,t_1]$. By our assumption, we have
\begin{align*}
d(g_1c_2(t_1),c_2(t_1)) \le & d(g_1c_2(t_1),c_1(t_1)) + d(c_1(t_l),c_1(t_0)) + \\ 
&d(c_1(t_0),c_2(t_0)) + d(c_2(t_0),c_2(t_l)) < 10\delta_0.
\end{align*}
Therefore there exist $y_1 \in B_{\delta}(c_2(y_1))$ and an isotropy action $g_1' \in T^1_{y_1}$ so that $g_1^{-1} g_1' \in (-\delta,\delta)$. Now we can redefine $c_2$ by connecting $c_2(0)$ to $y_1$ via a geodesic and from $y_1$ to $c_2(t_1)$ via another geodesic. This new image is homotopic to the original one because they are both contained in an $\epsilon$-ball.

Identify $g_1'$ as an element in $[-\frac{1}{2},\frac{1}{2}) = \mathbb{R}/\mathbb{Z} = T^l$ . Now we add a loop at $y_1$ to $c_2$, which is the image of $\gamma(s)=[-sg_1'](y_1)$,$s \in [0,1]$. Then we redefine $c_2$ again on $[0,t_1]$. For $t \in [0,t_1]$, let $c_2(t)$ be the geodesic from $c_2(0)$ to $y_1$; for  $t \in [t_1/3,2t_1/3]$, let $c_2$ be the image of $\gamma(s)$; for $t \in [2t_1/3,t_1]$, let $c_2$ be the geodesic from $y_1$ to $c_2(t_1)$. The new image of $c_2$ on $[0,t_1]$ is homotpic to the old one adding a loop in the $T^1$-orbit of $y_1$.

Define $g(t)$ as follows: $g(t)=0$ for $ t \in [0,t_1/3]$, and a continuous function on $[t_1/3,2t_1/3]$ so that $[g(t)]c_2(t)=y_1$. In particular, $[g(2t_1/3)]=g_1'$. On $[2t_1/3,t_1]$, define $g(t)$ as a continuous function with $[g(t)]g_1^{-1} \in (-\delta,\delta) \in T^1$ and $[g(t_1)]=g_1$; this can be done since $g_1^{-1}g_1' \in (\delta,\delta)$.
 
Now we check that $[g(t)]c_2(t)$ is point-wise $3\epsilon$-close to $c_1$ on $[0,t_1]$. For any $t \in [0,t_1/3]$, 
\begin{align*}
d( [g(t)]c_2(t),c_1(t)) & = d(c_2(t),c_1(t)) \\
& \le d(c_2(t),c_2(0))+d(c_2(0),c_1(0))+d(c_2(0),c_1(0)) \\
& < 2\delta < \epsilon.
\end{align*}
For any $t \in [t_1/3,2t_1/3]$, $[g(t)]c_2(t) = y_1$, and $d(y_1,c_1(t)) < 2\delta < \epsilon$. For any $t \in [2t_1/3,t_1]$, $[g(t)]g_1^{-1} \in (-\delta,\delta)$. Then 
\begin{align*}
d( [g(t)]c_2(t),c_1(t)) & \le d( [g(t)]c_2(t),g_1c_2(t)) + d( g_1c_2(t),g_1c_2(t_1)) + \\
& d(g_1c_2(t_1),c_1(t_1))+d(c_1(t_1),c_1(t)) \\
& \le \epsilon + \delta + \delta_0 +\epsilon < 3\epsilon.
\end{align*} 
Moreover, $d([g(t_1)]c_2(t_1),c_1(t_1)) = d(g_1c_2(t_1),c_1(t_1)) \le \delta_0$.

We can repeat the same construction for each subinterval $[t_l,t_{l+1}]$. At each step, we modify $c_2$ within small balls and add some loops in $T^1$-orbits. Then we can find a continuous $g(t)$ so that $d([g(t)]c_2(t),c_1(t)) \le 3 \epsilon$ for any $t$ and $[g(t_l)] = g_l$ for each $l$. Therefore $g(1) \in \mathbb{Z}$. If $g(1) \neq 0$, we can add a loop corresponding $-g(1)$ in the $T^1$-orbit of $p$ to $c_2$, then redefine $g$ by the same argument to get $g(1)=0$.   
\end{proof}
\begin{remark}
Lemma \ref{circle} still holds if we replace $T^1$ by any connected compact isometric Lie group actions. The proof is almost the same.
\end{remark}
\bibliographystyle{plain} 
\bibliography{bib}
\end{document}